\newcommand{\bea}{\begin{eqnarray}}
\newcommand{\eea}{\end{eqnarray}}
\def\beaa{\begin{eqnarray*}}
\def\eeaa{\end{eqnarray*}}
\def\ba{\begin{array}}
\def\ea{\end{array}}
\def\be#1{\begin{equation} \label{#1}}
\def \eeq{\end{equation}}
\def\beq{\begin{equation}}
\newcommand{\nn}{\nonumber}
\def\be{{\beta}}
\def\e{\varepsilon}
\def\eps{\epsilon}
\def\si{\sigma}
\def\R{{\mathbb{R}}}
\def\C{{\mathbb{C}}}
\def\T{{\mathbb{T}}}
\def\what{\widehat}
\theoremstyle{plain}
\newtheorem{theorem}{Theorem}[section]
\newtheorem{lemma}[theorem]{Lemma}
\newtheorem{proposition}[theorem]{Proposition}
\newtheorem{remark}[theorem]{Remark}
\numberwithin{equation}{section}
\begin{document}

\title[Asymptotic stability of solitons for mKdV]{Asymptotic stability of solitons for mKdV}

\author{Pierre Germain}
\address{Pierre Germain, Courant Institute of Mathematical Sciences, 251 Mercer Street, New York 10012-1185 NY, USA}
\email{pgermain@cims.nyu.edu}

\author{Fabio Pusateri}
\address{Fabio Pusateri,  Department of Mathematics, Princeton University, Washington Road, Princeton 08540 NJ, USA}
\email{fabiop@math.princeton.edu}

\author{Fr\'ed\'eric Rousset}
\address{Fr\'ed\'eric Rousset, Laboratoire de Math\'ematiques d'Orsay (UMR 8628), Universit\'e Paris-Sud, 91405 Orsay Cedex France et Institut Universitaire de France}
  \email{frederic.rousset@math.u-psud.fr}


\begin{abstract}
We prove a full  asymptotic stability result for solitary  wave solutions of the mKdV equation.
We consider small perturbations of solitary waves with 
polynomial decay at infinity and prove that solutions of the Cauchy problem evolving from such data
tend uniformly, on the real line, to another solitary wave as time goes to infinity.
We describe precisely the asymptotics of the perturbation behind the solitary wave
showing that it satisfies a nonlinearly modified scattering behavior.
This latter part of our result relies on a precise study of the asymptotic behavior of small solutions of the mKdV equation.
\end{abstract}

\subjclass[2000]{}
\keywords{mKdV, modified scattering, asymptotic stability, solitons}
\thanks{\noindent P. G. is partially supported by NSF grant DMS-1101269, a start-up grant from the Courant Institute, and a Sloan fellowship.
F. P. is partially supported by NSF grant  DMS-1265875.}

\maketitle

\setcounter{tocdepth}{1}
\tableofcontents
\setcounter{secnumdepth}{2}

\section{Introduction}

This paper is concerned with the Cauchy problem for the focusing modified Korteweg-de Vries (mKdV) equation
\begin{align}
\label{mKdV}
\tag{mKdV}
\left\{ \begin{array}{l} \partial_t u + \partial_x^3 u + \partial_x (u^3) = 0
\\
\\
u(t=0) = u_0 \end{array} \right.
\end{align}
for $u = u(t,x) \in \mathbb{R}$ and $(t,x) \in \mathbb{R} \times \mathbb{R}$.
This equation admits a  family of solitary wave  solutions of the form $u_{c}(t,x)= Q_{c}(x-ct)$ with
\begin{equation}
\label{Q_c}
 Q_{c}(\xi) = \sqrt{c} Q (\sqrt{c} \, \xi  ) , \qquad Q(s) := \sqrt{2}/\cosh (s) , \quad c>0.
\end{equation}
Our aim in this paper is to revisit the proof of global existence and modified scattering for \eqref{mKdV} for small and localized initial data,
and then extend it in order to obtain new asymptotic stability results for solitary wave solutions.

Important  conserved quantities\footnote{
As we will remark later, these are not needed to prove the small data result, which also  applies
to more general versions of \eqref{mKdV}.}
are the mass $M$, energy $H$, and momentum $P$
\begin{equation}
\label{ME}
M = \int_\mathbb{R} u^2 \, dx \qquad H = \int_\mathbb{R} \frac{1}{2} |\partial_x u|^2 - \frac{1}{4} |u|^4 \, dx,\qquad P = \int_\mathbb{R} u\,dx.
\end{equation}
Moreover, we note that solutions of \eqref{mKdV} enjoy the scaling symmetry
\begin{equation*}
u \longmapsto \lambda u (\lambda^3 t, \lambda x) ,
\end{equation*}
which is generated by the vector field $S = 1 + x\partial_x + 3 t \partial_t$.

\medskip
\subsection{Known results}

\subsubsection*{Global well-posedness and asymptotic behavior }
There is a vast body of literature dealing with  the mKdV equation,
and in particular with the local and global well-posedness of the Cauchy problem.
Without trying to be exhaustive, we mention the early works on the local and global well-posedness
by Kenig-Ponce-Vega \cite{KPV1} and Kato \cite{Kato}.
Global well-posedness in low regularity spaces, and in particular in the energy space $H^1$, was established
in the seminal work of Kenig-Ponce-Vega \cite{KPV2}.
In this latter paper the authors considered the wider class of generalized KdV (gKdV) equations
$\partial_t u + \partial_x^3 u + \partial_x u^p  = 0$, $p \geq 2$, which includes \eqref{mKdV} and the KdV equation ($p=2$).
Sharp, up to the end-point, global well-posedness in $H^s$ for $s>1/4$
was proved in the work of Colliander-Keel-Staffilani-Takaoka-Tao \cite{CKSTTKdV},
for both the focusing and defocusing mKdV equation on the line (and for $s \geq 1/2$ in the periodic case).
These results are complemented by several ill-posedness results; see for example
Christ-Colliander-Tao \cite{CCT} and references therein.\footnote{For more on the local and global well-posedness
and ill-posedness of KdV and generalized KdV equations
we refer to the books of Tao \cite{TaoBook} and Linares-Ponce \cite{LiPoBook}.} 

Besides global regularity, another fundamental question for dispersive PDEs concerns the asymptotic behavior for large times.
The first proof of global existence with a complete description of the asymptotic behavior of solutions of~\eqref{mKdV} in the defocusing case, is due to Deift and Zhou \cite{DZmKdV}, who used a steepest descent approach to oscillatory Riemann-Hilbert problems
and the inverse scattering transform \cite{ZakMan,AKNS}.
In \cite{DZmKdV}, thanks to the complete integrability of the defocusing mKdV equation,
the authors were able to treat suitably localized initial data with arbitrary size.
A proof of global existence and a (partial) derivation of the asymptotic behavior for small localized solutions,
without making use of complete integrability, was later given by Hayashi and Naumkin \cite{HNmKdV1,HNmKdV2},
following the ideas introduced in the context of the $1$d nonlinear Schr\"odinger (NLS) equation in \cite{HN}.
Recently, an alternative proof of the results in \cite{HNmKdV2}, with a precise derivation of asymptotics 
and a proof of asymptotic completeness, was given by Harrop-Griffiths \cite{HG}, following the approach used for the $1$d NLS equation in \cite{ITNLS}.

Our proof of global existence and asymptotic behavior - Theorem \ref{maintheo1} 
- relies on the intuition developed in \cite{KP}, where a very natural stationary phase argument is
used to understand the large time behavior of small and localized solutions and derive asymptotic corrections. 
This approach was inspired by the space-time resonance method put forward in~\cite{GMS1,GMS1a,GMS2}.
See section \ref{secideaglobal} below for a short explanation of these ideas in the present context.
A similar approach was also successfully employed in the proofs of global regularity and
modified scattering for 2d gravity \cite{IoPu1,IoPu2,IoPunote} and capillary \cite{IoPu3,IoPu4} water waves, 
and in other higher dimensional dispersive models \cite{KP,BosonStar}.

\subsubsection*{Stability of solitons}
The study of the stability of solitons also has a long  history,
but here we will only address results which are closer in spirit to the present paper.
The asymptotic stability in front of the soliton\footnote{Similarly to how it is stated in Theorem \ref{theosoliton2}.}
was first obtained by Pego-Weinstein \cite{Pego-Weinstein1} for initial perturbations of a soliton
with exponential decay as $x \rightarrow +\infty$.
This result was then refined by Mizumachi \cite{Mizumachi1}, who treated perturbations belonging to
polynomially weighted spaces of sufficiently high order.
For perturbations in the energy space $H^1$,
definitive asymptotic stability results in front of the solitary wave have been obtained for the whole class of subcritical gKdV equations
in a series of papers by Martel-Merle \cite{Martel-Merle,Martel-MerleRev,Martel-Merle2}.
 We also mention \cite{Merle-Vega} on the $L^2$ stability of KdV solitons, \cite{Buckmaster-Koch} on the $H^s$ $s \geq -1$ stability of KdV solitons,  \cite{MaMeTs} on $N$-soliton solutions
of subcritical gKdV equations, and  \cite{Mizumachi-Tzvetkov,Mizumachi2} for a different approach.
For more on the asymptotic stability of solitons and multi-solitons for subcritical gKdV equations
we refer the reader to the survey articles \cite{TaoSolitons,Martel-MerleReview} and references therein.

In \cite{Mizumachi1} the author also obtained a full stability result 
for gKdV equations with a nonlinearity of degree $p \in (3,5)$.
More precisely, he showed that a solution that evolves from a small perturbation of a soliton will asymptotically resolve in a
slightly differently modulated soliton, plus a radiation which behaves like a solution of the linear flow.
Note that for the gKdV equation with quartic nonlinearity ($p=4$), there are also scattering and asymptotic stability results
in critical spaces rather than polynomially weighted ones, see \cite{TaoSolitons2,Koch}.

The results we present extend the above mentioned works by
\setlength{\leftmargini}{2em}
\begin{itemize}
\item[i)] proving the (modified) scattering result for the radiation in the (critical) case of \eqref{mKdV},
\item[ii)] allowing a wider class of small perturbations belonging to weighted Sobolev spaces with 
  weak polynomial decay at infinity.
\end{itemize}
Because of the critical dispersive nature of the equation, in the case of \eqref{mKdV}
the radiation does not behave linearly, but requires a nonlinear correction.
See Theorem \ref{maintheo2} and Remark \ref{rkcondition} for more details.
 The proof 
that we give below combines the virial approach of Martel-Merle \cite{Martel-MerleRev}
and the weighted estimates of Pego-Weinstein \cite{Pego-Weinstein1},
in the spirit of  the recent work of Mizumachi-Tzvetkov \cite{Mizumachi-Tzvetkov} on the $L^2$ stability of solitons for KdV.

\medskip
\subsection{Main results}

Our first main result concerns the stability of the zero solution under small perturbations.

\begin{theorem}[Global Existence and Asymptotic Behavior]\label{maintheo1}
Let an initial data $u_0$ be given such that
\begin{align}
\label{initdata}
 {\| \langle x \rangle u_0 \|}_{H^1(\R)} \leq \e_0 .
\end{align}
There exists $\overline{\e_0} > 0$, such that for all $\e_0 \in (0, \overline{\e_0}]$ the Cauchy problem \eqref{mKdV}
admits a unique global solution $u \in C(\R,H^1(\R))$.
This solution satisfies the decay estimates
\begin{align}
\label{mainthoedecay}
| u(t,x) | \lesssim \e_0 t^{-1/3} {\langle x/t^{1/3} \rangle}^{-1/4} , \qquad
  | \partial_{x} u(t,x) | \lesssim  \e_0 t^{-2/3} {\langle x/t^{1/3} \rangle}^{1/4} .
\end{align}
Moreover, for $t \geq 1$ the solution $u$ has the following asymptotics:
\setlength{\leftmargini}{1.5em}
\begin{itemize}
\item In the region $x \geq t^{1/3}$ we have the improved decay
\begin{align}
\label{asy1}
| u(t,x) | \lesssim  \frac{\e_0}{t^{1/3} (x/t^{1/3})^{3/4}} ;
\end{align}

\item In the region $|x| \leq t^{1/3+2\gamma}$, for some $\gamma>0$ sufficiently small, 
the solution is approximately self-similar:
\begin{align}
\label{asy2}
\big| u(t,x) - \frac{1}{t^{1/3}} \varphi\big( \frac{x}{t^{1/3}} \big) \big| \lesssim  \frac{\e_0}{t^{1/3 + 3\gamma/2}} ,
\end{align}
where $\varphi$ is a bounded solution of the Painlev\'e II equation\footnote{The smallness of $\int \varphi(x)\,dx$
guarantees the existence and uniqueness of a bounded solution to the Painlev\'e II equation. Its asymptotics are as follows:
$\varphi(\xi) \sim 3^{1/9} \operatorname{Ai}(3^{1/3} \xi) \sim \frac{3^{7/36}}{2 \sqrt{\pi} \xi^{1/4}} e^{-\frac{2}{3 \sqrt 3} \xi^{3/2}}$
as $\xi \to \infty$, while
$\varphi(\xi) \sim \frac{3^{7/36}}{2 \sqrt{\pi} |\xi|^{1/4}} d \cos \left( -\frac{2}{3 \sqrt 3} |\xi|^{3/2}
+ \frac{\pi}{4} + \frac{3d^2}{4\pi} \log |y|^{3/2} + \theta \right)$ as $\xi \to -\infty$,
where $d$ and $\theta$ are constants depending on $\int \varphi(x) \,dx$.
We refer to \cite{HMPII} and \cite{DZPII} for this and much more on Painlev\'e II. Note that our proof will actually provide
 the existence of a bounded solution of the Painlev\'e equation.}
\begin{align*}
\varphi^{\prime\prime} - \frac{1}{3}\xi \varphi + \varphi^3 = 0 , \qquad  \int_\R \varphi(x) \, dx = \int_\R u_0(x) \, dx .
\end{align*}

\item In the region $x \leq - t^{1/3+2\gamma}$, the solution has a nonlinearly modified asymptotic behavior:
there exists $f_{\infty} \in L^{\infty}_\xi$ such that
\begin{align}
\label{modscatt}
\Big| u(t,x) - \frac{1}{\sqrt{3 t \xi_0}} \Re
  \exp \Big( -2 i t \xi_0^3 + \frac{i\pi}{4} + \frac{i}{6}|f_\infty (\xi_0) |^2 \log t \Big) f_\infty (\xi_0) \Big|
  \leq \frac{\e_0}{t^{1/3} (-x/t^{1/3})^{3/10}} , 
\end{align}
where $\xi_0 := \sqrt{-x/(3t)}$, and $\Re$ denotes the real part.
\end{itemize}

\end{theorem}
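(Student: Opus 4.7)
The plan is to establish all four assertions via a single bootstrap on a profile function, with the dispersive analysis driven by stationary phase applied to a cubic phase. Writing $f(t,\xi) := e^{-it\xi^3}\what{u}(t,\xi)$ and taking the Fourier transform of \eqref{mKdV} gives a profile equation
\[
\pr_t f(t,\xi) = -\frac{i\xi}{4\pi^2} \int e^{it\Phi(\xi,\eta,\si)}\, f(t,\eta)\, f(t,\si)\, f(t,\xi-\eta-\si)\,d\eta\,d\si
\]
with cubic phase $\Phi(\xi,\eta,\si) = -\xi^3 + \eta^3 + \si^3 + (\xi-\eta-\si)^3$. The space--time resonant set reduces essentially to $\{\eta=\xi,\,\si=-\xi\}$ and permutations, which in $(t,x)$-space corresponds to the two stationary points $\pm\xi_0 = \pm\sqrt{-x/(3t)}$ of the linear Airy phase $x\xi + t\xi^3$ when $x<0$. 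This is the mechanism forcing the logarithmic phase correction in \eqref{modscatt}.

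I would bootstrap a norm of the schematic form
\[
\|u(t)\|_{H^1} + \langle t\rangle^{-\a}\|\LL u(t)\|_{L^2} + \langle t\rangle^{1/3}\|u(t)\|_{L^\infty},
\]
with $\a>0$ arbitrarily small and $\LL := x - 3t\pr_x^2$ the natural weighted vector field commuting with $\pr_t + \pr_x^3$. The $H^1$ bound is immediate since the nonlinearity is in conservative form. Control of $\|\LL u\|_{L^2}$ comes from applying $\LL$ to \eqref{mKdV} and running an $L^2$ energy estimate on $\LL u$, where the quasilinear remainder $3\pr_x(u^2\LL u)$ is absorbed using the dispersive bound $\|u\|_{L^\infty} \lesssim t^{-1/3}$ and integrating the remaining commutators in time. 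The dispersive $L^\infty$ bound itself is a consequence of an Airy linear decay estimate of the form $\|e^{-t\pr_x^3} h\|_{L^\infty} \lesssim t^{-1/3}(\|h\|_{L^2}+\|xh\|_{L^2})^{1/2}\|h\|_{L^2}^{1/2}$, applied to the profile $f$; the refined factor $\langle x/t^{1/3}\rangle^{-1/4}$ in \eqref{mainthoedecay} comes from a Van der Corput analysis of $\FF^{-1}(e^{it\xi^3}f)$ in a neighbourhood of the Airy critical points.

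For the three regional asymptotics I would perform stationary phase on $u(t,x)=\FF^{-1}(e^{it\xi^3}f(t,\xi))(x)$ region by region. When $x \ge t^{1/3}$ the Airy phase has no real critical point; iterated integrations by parts, each producing a $\pr_\xi$ that is controlled through $\LL$, yield the improved decay \eqref{asy1}. When $x \le -t^{1/3+2\gamma}$ the critical points $\pm\xi_0$ are well separated from each other and from $0$, and stationary phase produces the oscillatory profile in \eqref{modscatt} once one knows that $f(t,\xi_0)$ converges modulo a logarithmic phase. That convergence is extracted from the profile equation: isolating the diagonal resonant contribution shows $\pr_t f(t,\xi) = \frac{i\xi}{6t}|f(t,\xi)|^2 f(t,\xi) + R(t,\xi)$ after normalizations, with $R$ time-integrable in $L^\infty_\xi$; renormalizing $g(t,\xi) := \exp\bigl(-\tfrac{i}{6}|f(t,\xi)|^2 \log t\bigr) f(t,\xi)$ then makes $\pr_t g \in L^1_t L^\infty_\xi$ and produces $f_\infty := \lim_{t\to\infty} g(t,\cdot)$. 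In the intermediate self-similar region $|x|\le t^{1/3+2\gamma}$ the two Airy critical points coalesce at $\xi=0$ and dispersive analysis degenerates; instead, the rescaling $v(t,y) := t^{1/3}u(t,t^{1/3}y)$ rewrites \eqref{mKdV} in self-similar variables so that $\pr_y^2 v - \tfrac13 y v + v^3 = O(t^{-1})$ to leading order, and a contraction argument, enabled by the smallness of $\int u_0 = \int \vphi$, delivers \eqref{asy2} with the prescribed $t^{-3\gamma/2}$ rate.

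The main technical obstacle, I expect, is the self-similar zone. Outside it the standard stationary-phase plus space--time-resonance machinery closes; inside it the linear Airy decay rate $t^{-1/3}$ is exactly saturated, the two Airy critical points merge at $\xi=0$, and no naive $L^\infty$ bootstrap can close. One is forced to compare $u$ \emph{directly} to an exact Painlev\'e II profile $\vphi$, to quantitatively control the remainder uniformly on a spatial window of size $\sim t^{1/3+2\gamma}$, and simultaneously to prove that a bounded Painlev\'e II solution with the prescribed small mass $\int u_0$ exists and has the stated oscillatory asymptotics at $\xi\to-\infty$ and decaying asymptotics at $\xi\to+\infty$. Matching this intermediate profile continuously to the modified-scattering tail at $x \sim -t^{1/3+2\gamma}$ and to the improved-decay tail at $x \sim t^{1/3+2\gamma}$ with uniform constants is where the most delicate bookkeeping will be required.
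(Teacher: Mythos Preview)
Your outline captures the correct heuristics --- the space--time resonance picture, the profile ODE producing the logarithmic phase, and the three-region splitting --- but the bootstrap as written does not close, for two linked reasons.

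First, the quantity $\|\widehat f(t)\|_{L^\infty_\xi}$ is absent from your bootstrap norm, and it cannot be recovered from the other pieces. Your proposed Airy estimate $\|e^{-t\partial_x^3}h\|_{L^\infty}\lesssim t^{-1/3}\|h\|_{L^2}^{1/2}\|xh\|_{L^2}^{1/2}$ is correct, but since $\|\LL u\|_{L^2}=\|xf\|_{L^2}$ genuinely grows like $t^{1/6}$ (see below), feeding this into the estimate yields only $\|u(t)\|_{L^\infty}\lesssim t^{-1/4}$, not $t^{-1/3}$. For a cubic nonlinearity in one dimension that loss is fatal: the effective source term is then $O(t^{-3/4})$ rather than $O(t^{-1})$ and nothing integrates. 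The paper's remedy is to put $\|\widehat f\|_{L^\infty}$ directly into the bootstrap; the refined linear estimate (Lemma~\ref{lemlinear2}) that produces the pointwise bound \eqref{mainthoedecay} needs precisely $\|\widehat f\|_{L^\infty}+t^{-1/6}\|xf\|_{L^2}$ as input, and the stationary-phase estimate on the remainder $R$ in the profile ODE (Section~\ref{keysec}) likewise uses $\|\widehat f\|_{L^\infty}$ throughout. Without this component, the step ``$R$ is time-integrable in $L^\infty_\xi$'' is circular.

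Second, the claim that $\|\LL u\|_{L^2}$ grows like $t^\alpha$ with $\alpha$ arbitrarily small is false. Since $\LL u=e^{-t\partial_x^3}(xf)$ one has $\|\LL u\|_{L^2}=\|xf\|_{L^2}$, and the identity $xf=e^{t\partial_x^3}ISu+3t\,e^{t\partial_x^3}u^3$ (with $S=1+x\partial_x+3t\partial_t$ the scaling field) forces a contribution $3t\|u^3\|_{L^2}\sim t^{1/6}$ from the nonlinearity, regardless of how well one controls the vector-field piece. Equivalently, when you commute $\LL$ with $\partial_x(u^3)$ you do not get only $3\partial_x(u^2\LL u)$: there are inhomogeneous remainders of size $\|u^3\|_{L^2}\sim t^{-5/6}$ which, after Gr\"onwall, produce the $t^{1/6}$ growth. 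The paper handles this by (i) working with $ISu$ rather than $\LL u$ --- the antiderivative of $Su$ satisfies a clean transport equation and grows only like $t^{C\e^2}$ --- and (ii) adding a fractional norm $\||\partial_x|^\alpha xf\|_{L^2}$ with $\alpha$ near $1/2$, whose growth $t^{1/6-\alpha/3}$ is what actually makes the stationary-phase remainder integrable in Section~\ref{keysec}.

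A smaller point on the self-similar region: the paper does not run a contraction on Painlev\'e~II. Instead it shows directly that the rescaling $v(t,\cdot)=t^{1/3}u(t,t^{1/3}\cdot)$ is Cauchy in $L^\infty$ as $t\to\infty$, using $\partial_t v=(3t)^{-1}t^{1/3}(Su)(t,t^{1/3}\cdot)$ together with the slow growth of $\|ISu\|_{L^2}$ and a high-frequency cutoff; the limit $\varphi$ is then seen to solve Painlev\'e~II because $\|xv-3v_{xx}-3v^3\|_{L^2}\to 0$. No a~priori existence theory for Painlev\'e~II is invoked --- the argument produces the bounded solution.
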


\begin{remark}\normalfont
In the proof of Theorem \ref{maintheo1} above, the Hamiltonian structure of the equation,
as well as the conservation of mass and energy, do not play any crucial role.
For convenience we will work with \eqref{mKdV}, but it will be clear that all our results also apply to the defocusing mKdV equation
$\partial_t u + \partial_x^3 u - \partial_x (u^3) = 0$, and to other (not necessarily Hamiltonian) versions of the equation, such as
\begin{align}
\label{mKdV'}
\partial_t u + \partial_x^3 u = a(t)\partial_x (u^3), 
\end{align}
where $|a(t)| \leq 1, |a^\prime(t)| \leq \langle t \rangle ^{-7/6}$. 
\end{remark}

\begin{remark}
\label{rempartialalpha}\normalfont
In Theorem \ref{maintheo1} we have decided to state the global existence and scattering result 
for initial data satisfying \eqref{initdata}, that is $x u_0 \in H^1$.
However, in the proof we only make use of the assumption $x u_0 \in H^{\alpha}$, 
for some $\alpha$ close to, but less than, $1/2$. 
We can therefore treat a larger class of initial data with respect to \cite{HNmKdV2,HG}.
Nevertheless, we have decided to state Theorem \ref{maintheo1} assuming the stronger initial condition \eqref{initdata},
in order to make its application in the proof of Theorem \ref{maintheo2} below more convenient.
\end{remark}

\begin{remark}\normalfont
We chose to characterize the modified asymptotic behavior of $u$ \eqref{modscatt}
in $L^\infty$, but statements analogous to \eqref{asy1}-\eqref{modscatt} can be obtained for $L^2$-type norms.
\end{remark}


\medskip
Our second main result is a strong asymptotic stability result for soliton solutions,
under small perturbations belonging to a weak algebraically weighted space.

\begin{theorem}[Full Asymptotic Stability of Solitons]\label{maintheo2}
Assume that
\begin{align}
u_0(x) = Q_{c_0}(x) + v_0(x) ,
\end{align}
for some $c_0 > 0$, with
\begin{align}
\label{assv_0}
{\|\langle x \rangle v_0\|}_{H^1(\R)} +  {\| \langle x_+ \rangle^m v_0\|}_{H^1(\R)} \leq \e_0 ,
\end{align}
for some $m > 3/2$.
Then, for $\e_0$ sufficiently small, there exists a unique solution $u \in C(\R,H^1(\R))$ of \eqref{mKdV}
 and a continuous function $C(\cdot)$ with $C(0)=0$, such that for some $c_+>0$ and  $x_+$ with
\begin{align}
|c_+ - c_0| + |x_+ | \lesssim  C( \e_0) ,
\end{align}
we have
\begin{align}
\label{radiation1}
| u(t,x) - Q_{c_+}(x- c_+ t -x_+) - R(t,x) | \lesssim C (\e_0) \langle t \rangle^{-1}
\end{align}
where:
\setlength{\leftmargini}{1.5em}
\begin{itemize}

\item The radiation $R$ verifies the decay estimates
\begin{align}
\label{radiation2}
{\| R(t) \|}_{L^\infty(\R)} \lesssim C( \e_0) \langle t \rangle^{-1/3}.
\end{align}

\item $R$ has the same asymptotics as a small solution to \eqref{mKdV},
and in particular possesses a modified scattering behavior as $t \rightarrow \infty$ as in \eqref{modscatt}.

\end{itemize}
\end{theorem}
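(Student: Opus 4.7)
My strategy combines the virial/monotonicity approach of Martel-Merle with the Pego-Weinstein weighted spectral theory, and then invokes Theorem \ref{maintheo1} to read off the asymptotic behavior of the radiation. I would write
\[
u(t,x) = Q_{c(t)}(x - y(t)) + v(t, x - y(t)),
\]
with modulation parameters $(c(t), y(t))$ determined by symplectic orthogonality of $v(t,\cdot)$ to the two-dimensional generalized null space of the linearized operator $\LL_c := -\partial_x^2 + c - 3 Q_c^2$. The implicit function theorem yields $|c(0) - c_0| + |y(0)| \lesssim \e_0$ and a bound of order $\e_0$ on $v(0)$ in the weighted space of \eqref{assv_0}. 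Projecting the equation for $v$ against the secular modes of $\LL_c$ produces quadratic modulation equations of the form $|\dot{c}(t)| + |\dot{y}(t) - c(t)| \lesssim \|e^{-|x|/2} v(t)\|_{L^2}^2$.

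The next step is to establish monotonicity of virial-type quantities $\int \psi(x - y(t))\, v(t,x)^2\, dx$ for a weight $\psi$ concentrated to the right of the soliton, as in \cite{Martel-MerleRev,Mizumachi-Tzvetkov}. This exploits the geometric fact that the soliton moves right at speed $c_0 > 0$ while the linear Airy group velocity $-3\xi^2$ is non-positive, so the perturbation radiates to the left, away from the soliton. Coupled with Pego-Weinstein type resolvent estimates for $\LL_c$ in polynomially weighted spaces, this forces polynomial-in-time decay of $v$ restricted to $\{x > y(t)\}$, matched in strength to the weight $\langle x_+ \rangle^m$ with $m > 3/2$ from \eqref{assv_0}. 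Integrating the modulation equations then produces limits $c_+ = \lim_{t \to \infty} c(t)$ and $x_+ = \lim_{t \to \infty} (y(t) - c_+ t)$, with convergence rate $O(\langle t \rangle^{-1})$; this is precisely what is needed for the soliton part of \eqref{radiation1}.

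For the radiation, I would apply Theorem \ref{maintheo1} to $v$ itself. The perturbation solves a forced \eqref{mKdV} equation whose source, arising from the soliton, is exponentially localized near $0$ (in the frame moving with $y(t)$) and integrable in $t$ thanks to the above modulation bounds. Picking $T_0$ large, the $\langle x \rangle$-weight in \eqref{assv_0} can be propagated up to time $T_0$ so that $\|\langle x \rangle v(T_0)\|_{H^1} \lesssim \e_0$; restarting the Cauchy problem at $T_0$ and applying Theorem \ref{maintheo1} delivers the $L^\infty$ decay \eqref{radiation2} and the modified scattering \eqref{modscatt} for $R(t,x) = v(t, x - y(t)) + \bigl(Q_{c(t)}(x - y(t)) - Q_{c_+}(x - c_+ t - x_+)\bigr)$, where the second term is already $O(\langle t \rangle^{-1})$ from Step~2.

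The main obstacle is the joint propagation of the two weights appearing in \eqref{assv_0}: the strong right-sided weight $\langle x_+ \rangle^m$ fueling the Martel-Merle virial, and the symmetric weight $\langle x \rangle$ needed to invoke Theorem \ref{maintheo1} for the modified scattering of the radiation. The critical dispersive nature of \eqref{mKdV} means the radiation does not scatter linearly, so the weighted $H^1$ bound on $v$ must persist uniformly in time despite the cubic feedback; reconciling this with the virial/modulation bookkeeping, in particular controlling the soliton-radiation coupling terms at the precise rate that makes the $\langle t \rangle^{-1}$ error in \eqref{radiation1} compatible with the slower $\langle t \rangle^{-1/3}$ decay of $R$, is the technical heart of the argument.
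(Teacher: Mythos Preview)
Your plan has the right high-level ingredients (modulation, virial monotonicity, Pego--Weinstein, and then the small-data machinery of Theorem~\ref{maintheo1} for the radiation), but it is missing one structural device that the paper relies on essentially, and one step you describe would fail as written.

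\textbf{The missing device: the Mizumachi splitting.} You propose to work with a single function $v$ and combine virial estimates with Pego--Weinstein bounds. The difficulty is that these two tools live in incompatible function spaces: virial gives decay in \emph{polynomially} weighted $H^1$ on the right, while the Pego--Weinstein semigroup estimates for the linearized operator $\mathcal{L}_c = \partial_y(-c + \partial_y^2 + 3Q_c^2)$ (note: the non-self-adjoint KdV linearization, not the Schr\"odinger operator you wrote) hold in \emph{exponentially} weighted $L^2_a$. Since $v_0$ has only polynomial decay, $v$ is never in $L^2_a$ and Pego--Weinstein cannot be applied to it. The paper resolves this by writing $v = v_1 + v_2$, where $v_1$ solves the \emph{free} mKdV equation in the moving frame with data $v_0$, and $v_2$ solves the linearized-around-$Q_c$ equation with \emph{zero} data. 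Then $v_1$ is controlled purely by virial (the soliton is absent from its equation), while $v_2$ starts in $L^2_a$ trivially and stays there because every source term in its equation carries either a $Q_c$ or a $v_2$ factor, hence exponential localization. This splitting is what makes the two weighted theories talk to each other, and it is also what later feeds the weighted estimate $\int_0^t \|IS\tilde v\|_{H^1_{w'}}^2\,d\tau \lesssim \tilde\epsilon_1^2$ needed to close the $X$-norm bootstrap for the radiation.

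\textbf{The step that fails.} You write ``restarting the Cauchy problem at $T_0$ and applying Theorem~\ref{maintheo1}''. This does not work: $v$ (equivalently $\tilde v$ in the fixed frame) never solves \eqref{mKdV}; at every time it satisfies a forced equation $\partial_t \tilde v + \partial_x^3 \tilde v + \partial_x(\tilde v^3) = \partial_x K$ with $K$ containing the soliton--radiation coupling. Theorem~\ref{maintheo1} is a statement about solutions of \eqref{mKdV}, so it cannot be invoked as a black box. What the paper actually does is \emph{re-run the proof} of Theorem~\ref{maintheo1} for $\tilde v$, showing that the extra source $\partial_x K$ is harmless in each of the three estimates (energy on $IS\tilde v$, the $|\partial_x|^\alpha x f$ bound, and the $\|\widehat f\|_{L^\infty}$ bound). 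This requires, beyond localization of $K$, the time-integrated weighted control of $IS\tilde v_1$ and $IS\tilde v_2$ in front of the soliton (Lemmas~\ref{lemtildev1} and~\ref{lemtildev2}), which again hinges on the $v_1/v_2$ splitting and on the identity $\partial_c Q_c = \frac{1}{2c}\partial_y(yQ_c)$ used to write the modulation source as an exact derivative.
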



Note that we prove a full asymptotic stability result by describing the behavior of the perturbation behind the solitary wave and that,
because of the critical dispersive decay of the mKdV equation, the radiation has nonlinear asymptotic oscillation.

\begin{remark}\normalfont
\label{rkcondition} 
Note that the spatial decay \eqref{assv_0} that we require in front of the solitary is only slightly more than $x^{3/2} u$ in $L^2$.
This is at the same scale as the decay property which is used in the inverse scattering theory,
where one requires $x u_{0}$ in $L^1$. 
Spatial decay conditions on the data are not explicitly stated in the work of Deift-Zhou on the defocusing mKdV equation \cite{DZmKdV},
but the condition above is used in the application of direct and inverse scattering in \cite{AblowitzBook,Melin}.
We also refer to \cite{Klein-Saut} for a recent survey.
\end{remark}


\medskip
\subsection{Ideas of the proof}

We now briefly explain the main ideas and the intuition behind our results.

\subsubsection{Global existence and modified scattering}\label{secideaglobal}
In what follows we let
\begin{align}
 \label{prof}
f(t) := e^{t\partial_x^3} u(t)
\end{align}
so that $\partial_t f = - e^{t \partial_x^3}\partial_x (u^3)$.
Then we can write \eqref{mKdV} as
\begin{equation}
\label{toucan}
\partial_t \widehat{f}(t,\xi) = - \frac{1}{2\pi} \iint e^{-it \phi(\xi,\eta,\sigma)}
 i \xi \, \widehat{f}(t,\xi-\eta-\sigma) \widehat{f}(t,\eta) \widehat{f}(t,\sigma) \,d\eta \, d\sigma.
\end{equation}
with
$$
\phi(\xi,\eta,\sigma) = \xi^3 - (\xi-\eta-\sigma)^3 - \eta^3 - \sigma^3 = 3 (\eta + \sigma)(\xi-\eta)(\xi-\sigma).
$$
We follow the approach of the space-time resonances method \cite{PG}
which is to view the above integral as an oscillatory integral,
whose large-time behavior will thus be dictated by the stationary points (in $\eta$, and in $t$ after time integration) of the phase $\phi$.
As observed in \cite{KP}, this will give a very simple means of computing the large-time correction to scattering,
due to the long-range effects of the critically dispersing nonlinear term.


Before explaining this argument, we need to describe precisely the stationary points of $\phi$. A small computation gives that
\begin{align}
\label{gradphi}
\left\{ \begin{array}{l} \partial_\eta \phi(\xi,\eta,\sigma) = 3(\xi-\sigma)(\xi-2 \eta - \sigma)
\\
\\
\partial_\sigma \phi(\xi,\eta,\sigma) = 3(\xi-\eta)(\xi- \eta - 2 \sigma) \end{array} \right.
\end{align}
and
\begin{align}
 \label{Hess}
\det \operatorname{Hess}_{\eta,\sigma} \phi = -36 (\eta^2 + \sigma^2 + \eta \sigma - \xi \eta - \xi \sigma ).
\end{align}
Notice that
\begin{align}
\label{statpoint0}
\partial_\eta \phi = \partial_\sigma \phi = 0
\quad \Longleftrightarrow \quad (\eta,\sigma) = (\eta_i,\sigma_i) \quad , \quad 1\leq i\leq 4
\end{align}
with
\begin{align}
\label{statpoint}
\left\{
\begin{array}{l} (\eta_1,\sigma_1) = (\xi,\xi) \\ (\eta_2,\sigma_2) =
 (\xi,-\xi) \\ (\eta_3,\sigma_3) = (-\xi,\xi) \\ (\eta_4,\sigma_4) = \left( \frac{\xi}{3},\frac{\xi}{3} \right),
\end{array} \right.
\end{align}
and that, for $i \in \{1,2,3\}$,
\begin{align}
\label{statphase}
\begin{split}
& \phi(\xi,\eta_i,\sigma_i) = 0 \\
& \phi(\xi,\eta_4,\sigma_4) = (8/9) \xi^3 \\
& \operatorname{det} \operatorname{Hess}_{\eta,\sigma} \phi (\xi,\eta_i,\sigma_i) = -36 \xi^2 \\
& \operatorname{det} \operatorname{Hess}_{\eta,\sigma} \phi (\xi,\eta_4,\sigma_4) = 12 \xi^2 \\
& \operatorname{sign} \operatorname{Hess}_{\eta,\sigma} \phi (\xi,\eta_i,\sigma_i) = 0 \\
& \operatorname{sign} \operatorname{Hess}_{\eta,\sigma} \phi (\xi,\eta_4,\sigma_4) = 1 - \operatorname{sign} \xi,
\end{split}
\end{align}
where $\operatorname{sign} M$ is the signature of the matrix $M$.

The above computations are the basis to derive - heuristically for the moment - the large time behavior of $f$.
By the stationary phase lemma, assuming that $\widehat{f}$ is sufficiently smooth, \eqref{toucan} implies that
\begin{align}
\label{d_tf0}
\partial_t \widehat{f}(t,\xi) = \frac{i \operatorname{sign} \xi}{6t} |\widehat{f}(\xi)|^2 \widehat{f}(\xi)
+ \frac{ic}{t} e^{-it \frac{8}{9} \xi^3} \widehat{f} \left( \frac{\xi}{3} \right)^3 + \{ \mbox{integrable terms} \}.
\end{align}
The second summand on the right-hand side should not be asymptotically relevant, due to the time-oscillating term.
Thus the above reduces to
$$
\partial_t \widehat{f}(t,\xi) \sim \frac{i \operatorname{sign} \xi}{6t} |\widehat{f}(\xi)|^2 \widehat{f}(\xi) + \{ \mbox{integrable terms} \},
$$
from which we infer that $|\widehat{f}|$ converges as $t \to \infty$ to an asymptotic profile $F$, while
$$
\widehat{f}(t,\xi) \sim F(\xi) \exp \Big(i \frac{\operatorname{sign} \xi}{6t} |F(\xi)|^2 \log t \Big), \quad \mbox{as $\,\, t \to \infty$}.
$$

\subsubsection{Asymptotic stability of solitons}\label{secideasoliton}
The key idea when proving asymptotic stability for the soliton will be the following decomposition
$$
u(t,x) = Q_{c(t)}(y) + \underbrace{v_1(t,y) + v_2(t,y)}_{v(t,y)}.
$$
We now explain precisely how the new coordinate $y$, the soliton parameter $c$, and the radiation part $v = v_1 + v_2$ are determined.
First, the new coordinates
$$y = x - \int_0^t c(s)\,ds + h(t)$$
correspond to adopting as a reference the moving frame of the soliton;
the modulation parameters $c$ and $h$ will be chosen below to ensure a certain cancellation.
The radiation part $v$ satisfies the perturbed equation
$$
\left\{ \begin{array}{l}
\partial_t v + \underbrace{ \partial_y (-c+\partial_y^2 + 3Q_c^2)}_{\mathcal{L}_c} v = \partial_y ((Q_c + v)^3 - Q_c^3 - 3 Q_c^2 v)
+ \mbox{\{less important  terms\}}
\\
v(t=0) = v_0.
\end{array} \right.
$$
The asymptotic stability of solitons follows from the decay of $v$;
this in turn is given by decay estimates for the linear group $e^{t \mathcal{L}_c}$.
However, the functions in the generalized kernel of $\mathcal{L}_c$ (of dimension 2) do not decay under this semi group.
Thus one needs to make sure that, in the spectral decomposition associated to $\mathcal{L}_c$,
the component of $v$ in the generalized kernel of $\mathcal{L}_c$ is zero: this condition completely determines $c$ and $h$.

 Following the work of Mizumachi \cite{Mizumachi0}, see also \cite{Mizumachi-Pego,Mizumachi-Tzvetkov}, 
the radiation part $v$ is then split into $v= v_1 + v_2$, where $v_1$ simply solves \eqref{mKdV} in the $y$ coordinates, with data $v_0$,
$$
\left\{ \begin{array}{l}
\partial_t v_1 - (c + \dot h) \partial_y v_1 + \partial_y (v_1^3) + \partial_y^3 v = 0
\\
v_1(t=0) = v_0,
\end{array} \right.
$$
while $v_2$, the remainder term with zero initial data, solves
$$
\left\{ \begin{array}{l}
\partial_t v_2 - \mathcal{L}_c v_2 = - \partial_y ( (Q_c + v_1 + v_2)^3 - Q_c^3 - v_1^3 - 3 Q_c^2 v_2) + \mbox{\{less important  terms\}}
\\
v_2(t=0) = 0.
\end{array} \right.
$$
The advantages of this decomposition become clear when one tries to obtain decay
for the part of the wave which is to the right of the soliton, that is the region $\{ y > 0 \}$.
In more technical terms, we want to obtain decay for $\| \langle y_+ \rangle^m v_1 \|_{L^2} + \| e^{ay} v_2 \|_{L^2}$.

\setlength{\leftmargini}{1.5em}
\begin{itemize}

\item The decay of ${\| \langle y_+ \rangle^m v_1 \|}_{L^2}$ is obtained by a virial-type argument,
  which one can apply since the equation for $v_1$ does not ``see'' the soliton, and the data are such that
  ${\| \langle y_+ \rangle^m v_0 \|}_{L^2} < \infty$.

\item The decay of ${\| e^{ay} v_2 \|}_{L^2}$ is obtained via decay estimates in exponentially weighted spaces
  (with norm of the type $\|  e^{ay} \cdot \|_{L^2}$) for the linear group $e^{t\mathcal{L}_c}$.
  This requires the data, as well as the right-hand side of the $v_2$ equation, to belong to an exponentially weighted space.
  This is easily checked for the $v_2$ equation: the data is zero, and expanding the right-hand side,
  it appears that the slowly decaying $v_1$ factors are always coupled to $v_2$ or $Q_c$, thus ensuring exponential decay.

\end{itemize}

%
%
%
%
%
%
%

\subsubsection{Organization of the paper}
Section \ref{seczero} contains the proof of Theorem \ref{maintheo1} about the stability of the trivial solution.
We begin by establishing some linear and simple multilinear decay estimates in \ref{seclinear}.
In \ref{secee} we prove energy estimates involving the scaling vector field.
Sections \ref{secfhat} and \ref{keysec} contain the heart of the proof of Theorem \ref{maintheo1}, that is the justification of
the asymptotic expansion \eqref{d_tf0} and the control of the remainder terms.
In section \ref{secasy} we derive the complete asymptotic description of small solutions of \eqref{mKdV} relying on
a refined linear estimate and the global bounds established before.
Section \ref{secsoliton} is devoted to the proof of Theorem \ref{maintheo2} about the stability of soliton solutions.
We first prove asymptotic stability \`a la Pego-Weinstein in section \ref{secsoliton1}, and then
give the proof of scattering for the radiation in section \ref{secsoliton2}.

\medskip
\subsection{Notations}\label{secnot}
For $x \in \mathbb{R}$,  we set
$$
\langle x \rangle = \sqrt{1 + x^2}.
$$
We denote $C$ for a constant whose value may change from one line to another.
Given two quantities $X$ and $Y$, we write

\setlength{\leftmargini}{2em}
\begin{itemize}

\item $X \lesssim Y$ if $X \leq C Y$ for a constant $C$.

\item $X \sim Y$ if $X \lesssim Y$ and $Y \lesssim X$.

\item $X \ll Y$ if $X \leq cY$ for a small constant $c$.

\end{itemize}

We define the Fourier transform by
\begin{align*}
\mathcal{F} g (\xi) = \widehat{g}(\xi) := \frac{1}{\sqrt{2\pi}} \int_{\mathbb{R}} e^{-ix\xi} g(x) \,dx
  \quad \implies \quad g(x) = \frac{1}{\sqrt{2\pi}} \int_{\mathbb{R}} e^{ix\xi} \widehat{g}(\xi) \,d\xi \, .
\end{align*}
The Fourier multiplier $m(\partial_x)$ with symbol $m$ is given by
$$
\mathcal{F}[m(\partial_x) f](\xi) = m(i \xi) \widehat{f}(\xi).
$$
and the pseudoproduct operator $T_m$ with symbol $m(\xi,\eta)$ by
$$
\mathcal{F} [T_m(f,g)](\xi) = \int m(\xi,\eta) \widehat{f}(\xi-\eta) g(\eta) \,d\eta.
$$
Let $\psi$ be smooth, supported on $[-2,-\frac{1}{2}] \cup [\frac{1}{2},2]$, and satisfying
$$
\sum_{j \in \mathbb{Z}} \psi \left( \frac{\xi}{2^j} \right) = 1 , \qquad \mbox{for $\xi \neq 0$}.
$$
Define
$$
\chi = \sum_{j<0} \psi \left( \frac{\xi}{2^j} \right)
$$
and the Littlewood-Paley operators
\begin{align*}
& P_j = \psi \left( \frac{\partial_x}{i 2^j} \right) \, , \quad P_{<j} = \chi \left( \frac{\partial_x}{i 2^j} \right)
  , \quad  P_{>j} = 1-\chi \left( \frac{\partial_x}{i 2^j} \right) ,
\\
& P_{\sim j} = \sum_{2^k \sim 2^j} P_k ,\quad P_{\ll j} = \sum_{2^k \ll 2^j} P_k ,\quad P_{\lesssim j} = \sum_{2^k \leq 2^{j+C}} P_k .
\end{align*}
We will often denote $f_j := P_j f$,  $f_{\sim j} := P_{\sim j} f$, and so on.

\bigskip
\section{Stability of the zero solution}\label{seczero}
We assume that the following $X$-norm is a priori small:
\begin{align}
\label{apriori}
{\|u\|}_X = \sup_{t\geq 1} \left( t^{-\delta} {\| u \|}_{H^1}
  + t^{-1/6} {\big\| x f \big\|}_{H^1}
  + t^{\alpha/3 - 1/6} {\big\||\partial_x|^\alpha xf \big\|}_{L^2}
  + {\big\| \widehat{f}(\xi) \big\|}_{L^\infty}  \right) \leq \e_1 \, .
\end{align}
We will then show
\begin{align}
\label{conc}
 {\| u \|}_X \leq \e_0 + C \e_1^3
\end{align}
for some absolute constant $C$. This a priori estimate, combined  with a bootstrap argument,
and the choice $\e_1 = \e_0^{2/3}$,
gives global existence for sufficiently small $\varepsilon_0$.
For simplicity, and without loss of generality, we only consider $t \geq 1$,
assuming that a local solutions has been already constructed on the time interval $[0,1]$ by standard methods, such as those in \cite{KPV1}.
Using also time reversibility we obtain a solution for all times.

\bigskip
\subsection{Linear and multilinear estimates}\label{seclinear}

We begin by proving a refined linear estimate which also gives useful $L^p$ bounds.

\begin{lemma}[Linear Estimates]\label{lemlinear2}
For any $t \geq 1$, $x \in \R$, and $u(t,x) = e^{-t\partial_x^3} f(t,x)$ with $f$ such that
\begin{equation}
\label{lemlinear2ass}
{\| \what{f}(t) \|}_{L^\infty} + t^{-1/6} {\| x f(t) \|}_{L^2} \leq 1,
\end{equation}
the following estimate holds true:
\begin{align}
 \label{lemlinear21}
& \big| |\partial_x |^\beta u(x,t) \big| \lesssim t^{-1/3 - \beta/3} \big( 1 +  |x/t^{1/3}| \big)^{-1/4 + \beta/2} ,
  \qquad 0 \leq \beta \leq 1 ,
\\
 \label{lemlinear21b}
& \big| \partial_x u(x,t) \big| \lesssim t^{-2/3} \big( 1 +  |x/t^{1/3}| \big)^{1/4} .
\end{align}
In particular, whenever $u =  e^{-t\partial_x^3} f$ satisfies the a priori bounds \eqref{apriori}, one has
for any $\beta \in [0,\frac{1}{2})$, and all $p$ with $p(1/4 - \beta/2) > 1$,
\begin{align}
\label{lemlinear22}
{\| |\partial_x |^\beta u(t) \|}_{L^p} \lesssim t^{-1/3 - \beta/3 + 1/(3p)} .
\end{align}
\end{lemma}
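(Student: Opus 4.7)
The plan is to start from the Fourier representation
\[
|\partial_x|^\beta u(t,x) = \frac{1}{\sqrt{2\pi}} \int e^{i(x\xi + t\xi^3)} |\xi|^\beta \widehat{f}(\xi)\,d\xi,
\]
with phase $\phi(\xi) = x\xi + t\xi^3$ whose critical points (when $x \leq 0$) are $\xi = \pm \xi_0$ with $\xi_0 = \sqrt{-x/(3t)}$, and to estimate this oscillatory integral by splitting $\xi$-space according to the position of $\xi_0$. After a Littlewood–Paley decomposition $\widehat{f} = \sum_j \psi(\xi/2^j) \widehat{f}$, the contribution of the very low frequencies $|\xi| \lesssim t^{-1/3}$ is controlled directly by $\|\widehat{f}\|_{L^\infty}$, giving $\lesssim t^{-1/3-\beta/3}$, which is acceptable on the range $|x/t^{1/3}| \lesssim 1$.

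For the remaining frequency pieces, I would compare $2^j$ to $\xi_0$ and treat three regimes. When $2^j \sim \xi_0$ (the stationary regime, which only occurs for $x < 0$), I apply van der Corput on each dyadic piece using $|\phi''(\xi)| = 6t|\xi| \sim t \xi_0$; the amplitude, of $L^\infty$ size $2^{j\beta}$ and bounded variation controlled by Cauchy–Schwarz from $\|\partial_\xi \widehat f\|_{L^2} \le t^{1/6}$ on an interval of length $\sim \xi_0$, contributes $2^{j\beta}(t\xi_0)^{-1/2} = t^{-1/3-\beta/3}|x/t^{1/3}|^{\beta/2 - 1/4}$, which is exactly the claimed profile. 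When $2^j \gg \xi_0$ or $x > 0$ (non-stationary from above), $|\phi'(\xi)| \gtrsim t\xi^2$, so integration by parts once gains $(t 2^{2j})^{-1}$ and the boundary/amplitude-derivative terms, estimated with the $\|\partial_\xi\widehat f\|_{L^2}$ bound via Cauchy–Schwarz, sum dyadically to something smaller than the stationary contribution. When $2^j \ll \xi_0$ (low frequencies far from the stationary point), $|\phi'(\xi)| \gtrsim |x|$, and one integration by parts similarly gives a gain of $|x|^{-1}$ that, after the dyadic sum, is absorbed by the main stationary term. This yields \eqref{lemlinear21}; the case $\beta = 1$ gives \eqref{lemlinear21b} directly, noting that the exponent of $(1+|x/t^{1/3}|)$ becomes $+1/4$ rather than decaying.

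The main obstacle is the low regularity of the amplitude: $\widehat f$ has only $\partial_\xi \widehat f \in L^2$ (with the large bound $t^{1/6}$), not $L^1$ or $L^\infty$. The trick will be that every time an integration by parts forces one derivative of $\widehat f$, it is confined to a dyadic interval of length $\lesssim \xi_0$ (or $\lesssim 2^j$), so Cauchy–Schwarz converts $L^2$ control to $L^1$ with a factor $\xi_0^{1/2}$; together with the powers of $t$ from the phase, these factors combine to give precisely the $t^{1/6}$ budget that cancels against $\|\partial_\xi\widehat f\|_{L^2} \leq t^{1/6}$. In the stationary regime itself, van der Corput requires a BV assumption on the amplitude; one applies it on each dyadic sub-interval and sums, again using $L^2 \hookrightarrow L^1$ on bounded intervals.

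Finally, \eqref{lemlinear22} follows by integrating \eqref{lemlinear21} in $x$: when $p(1/4 - \beta/2) > 1$ the integral $\int (1+|x/t^{1/3}|)^{-p(1/4-\beta/2)}\,dx \lesssim t^{1/3}$, so
\[
{\||\partial_x|^\beta u\|}_{L^p}^p \lesssim t^{-p(1/3+\beta/3)+1/3},
\]
giving the advertised $t^{-1/3 - \beta/3 + 1/(3p)}$, and the hypothesis \eqref{lemlinear2ass} is verified directly from the a priori bound \eqref{apriori}.
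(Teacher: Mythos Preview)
Your non-stationary analysis and the derivation of \eqref{lemlinear22} from \eqref{lemlinear21} are fine, and the overall architecture matches the paper's. The gap is in the stationary piece $2^j \sim \xi_0$.

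You apply van der Corput with $|\phi''| \sim t\xi_0$ on the full Littlewood--Paley interval of length $\sim \xi_0$, and claim the output is $2^{j\beta}(t\xi_0)^{-1/2}$. But van der Corput gives $(t\xi_0)^{-1/2}\big(\|a\|_{L^\infty} + \|a'\|_{L^1}\big)$, and the $\|a'\|_{L^1}$ contribution coming from $\partial_\xi\widehat f$ is, by Cauchy--Schwarz on an interval of length $\xi_0$, of size $\xi_0^\beta\,\xi_0^{1/2}\,t^{1/6}$. Thus the BV term yields
\[
(t\xi_0)^{-1/2}\,\xi_0^\beta\,\xi_0^{1/2}\,t^{1/6} = t^{-1/3}\xi_0^\beta = t^{-1/3-\beta/3}\,|x/t^{1/3}|^{\beta/2},
\]
which is larger than the target $t^{-1/3-\beta/3}|x/t^{1/3}|^{\beta/2-1/4}$ by exactly $(\xi_0 t^{1/3})^{1/2} = |x/t^{1/3}|^{1/4}$. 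Your claimed cancellation (``the factors combine to give precisely the $t^{1/6}$ budget'') does not occur here: $\xi_0^{1/2}t^{1/6}\ge 1$ throughout the regime $\xi_0\ge t^{-1/3}$ and is unbounded as $|x/t^{1/3}|\to\infty$. Subdividing into dyadic shells $|\xi-\xi_0|\sim 2^l$ and reapplying second-derivative van der Corput on each does not help either, since the BV term is dominated by the largest shell $2^l\sim\xi_0$ and gives the same loss.

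The fix, which is what the paper does, is to decompose the region $|\xi|\sim\xi_0$ around the stationary point with an \emph{innermost} interval of width $2^{l_0}\sim (t\xi_0)^{-1/2}$ rather than $\xi_0$. On that core you bound trivially by $\|\widehat f\|_{L^\infty}\cdot\xi_0^\beta\cdot 2^{l_0} = t^{-1/2}\xi_0^{\beta-1/2}$, exactly the target. On each outer shell $|\xi-\xi_0|\sim 2^l$, $l>l_0$, you integrate by parts using the \emph{first} derivative $|\phi'|\sim t\,2^l\xi_0$ (not van der Corput with $\phi''$). The term carrying $\partial_\xi\widehat f$ then picks up $(t\,2^l\xi_0)^{-1}$ and a Cauchy--Schwarz factor $2^{l/2}$; summing over $l\ge l_0$ gives $t^{-5/6}\xi_0^{\beta-1}2^{-l_0/2}$, which is $\le t^{-1/2}\xi_0^{\beta-1/2}$ precisely because $\xi_0\ge t^{-1/3}$. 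The point is that the scale at which you stop integrating by parts must be the stationary-phase scale $(t\xi_0)^{-1/2}$, not the dyadic scale $\xi_0$.
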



\begin{remark}
\normalfont
The refined linear estimate \eqref{lemlinear21} in the case $\beta = 0$, and the estimate \eqref{lemlinear21b}
coincide with the estimates obtained in \cite{HNmKdV2}; see also \cite{CV} for related work.
The improvement for $\beta > 0$ is needed to obtain \eqref{lemlinear22}, which will be in turn used to prove
the trilinear estimate \eqref{oriole} below.
\eqref{oriole} allows us to simplify our subsequent analysis (especially the key estimate
of section \ref{keysec}) and give a sharper global existence result, see remark \ref{rempartialalpha}.
\end{remark}

\begin{proof}
Denote $\Lambda(\xi) = \xi^3$, and write
\begin{align*}
u(t,x) = e^{-t\partial_x^3} f(t,x) = \frac{1}{\sqrt{2\pi}} \int_{\R} e^{it \phi(\xi)} \what{f}(\xi) \,d\xi ,
\qquad \phi(\xi) := \xi (x/t) + \Lambda(\xi).
\end{align*}
For $x \leq 0$, let
\begin{align}
\xi_0^{\pm} := \pm \sqrt{-x/(3t)}
\end{align}
denote the stationary points of the phase $\phi(\xi)$, $\phi^\prime (\xi_0^\pm) = 0$.
In the case $x > 0$ there are no stationary points, and the estimate in this case is easier and
follows from the same arguments that we present below.

We now restrict our attention to $x \leq 0$.
Up to taking complex conjugates, we notice that in order to obtain \eqref{lemlinear21}-\eqref{lemlinear21b} it suffices to show that
\begin{equation*}
\Big| 
  \int_0^\infty e^{i t \phi(\xi)}  |\xi|^\beta \what{f}(\xi,t)\,d\xi \Big|
  \lesssim t^{-1/3-\beta/3} \big( 1 +  |x/t^{1/3}| \big)^{-1/4 + \beta/2} ,
\end{equation*}
for all $\beta \in (0,1)$.
Let us denote $\xi_0 := \sqrt{-x/(3t)}$ the only stationary point in the above integral.
We see that since $|x/t|^{1/3} = 3 (\xi_0 t^{1/3})^2$,  it is then enough to show
\begin{equation}
\label{lemlinear2conc}
\Big| 
  \int_0^\infty e^{i t \phi(\xi)} |\xi|^\beta \what{f}(\xi,t) \,d\xi \Big|
  \lesssim t^{-1/3-\beta/3} \max \big( t^{1/3}\xi_0, 1\big)^{-1/2+\beta} ,
\end{equation}
for any $t \geq 1$, $x \leq 0$, and any function $f$ satisfying~\eqref{lemlinear2ass}.
We distinguish two cases depending on the size of $\xi_0$.


\bigskip

\noindent
{\it Case 1: $\xi_0 \leq t^{-1/3}$}.
In this case we only need to obtain a bound of $t^{-1/3-\beta/3}$ for the term in \eqref{lemlinear2conc}.
We split the integral in \eqref{lemlinear2conc} as follows:
\begin{align*}
& \int_0^\infty e^{i t \phi(\xi)}  |\xi|^\beta \what{f}(\xi)\,d\xi = A + B ,
\\
& A = \int_0^\infty e^{i t \phi(\xi)}  |\xi|^\beta \what{f}(\xi) \chi(2^{-10}t^{1/3}\xi) \,d\xi ,
\\
& B = \int_0^\infty e^{i t \phi(\xi)}  |\xi|^\beta \what{f}(\xi) (1-\chi(2^{-10}t^{1/3}\xi)) \,d\xi .
\end{align*}
The first term can be very easily estimated using the first bound in \eqref{lemlinear2ass}.
For the second term we notice that $|\xi| \gg \xi_0$ on the support of the integral,
so that  $| \partial_\xi \phi | = 3|\xi_0^2 - \xi^2| \gtrsim |\xi|^2 \gtrsim t^{-2/3}$.
An integration by parts then gives:
\begin{align*}
| B | & \lesssim B_1 + B_2 ,
\\
B_1 & = \frac{1}{t} \int_0^\infty
  \Big| \partial_\xi \Big( \frac{1}{\partial_\xi \phi} |\xi|^\beta (1-\chi(2^{-10}t^{1/3}\xi) \Big) \Big| |\what{f}(\xi)| \,d\xi ,
\\
B_2 & = \frac{1}{t} \int_0^\infty
  \Big| \frac{1}{\partial_\xi \phi} |\xi|^\beta (1-\chi(2^{-10}t^{1/3}\xi)\Big| | \partial_\xi \what{f}(\xi) | \,d\xi .
\end{align*}
We can then estimate
\begin{align*}
B_1 \lesssim \frac{1}{t} {\| \what{f} \|}_{L^\infty}
  \int_0^\infty \frac{1}{|\xi|^{3-\beta}} (1-\chi(2^{-10}t^{1/3}\xi)) + \frac{1}{|\xi|^{2-\beta}} | \chi^\prime (2^{-10}t^{1/3}\xi) | t^{1/3} \, d\xi
  \lesssim t^{-1/3 -\beta/3} .
\end{align*}
Similarly, we can use the second bound provided by \eqref{lemlinear2ass} to obtain:
\begin{align*}
B_2 \lesssim \frac{1}{t} {\| \partial_\xi \what{f} \|}_{L^2}
  \Big( \int_0^\infty \frac{1}{|\xi|^{4-2\beta}} (1-\chi(2^{-10}t^{1/3}\xi))  \, d\xi \Big)^{1/2}
  \lesssim t^{-1/3 -\beta/3} .
\end{align*}

\bigskip

\noindent
{\it Case 2: $\xi_0 \geq t^{-1/3}$}.
In this case we aim to prove a bound of $t^{-1/2} \xi_0^{-1/2+\beta}$ for the left-hand side of \eqref{lemlinear2conc}.
To separate the non-stationary and stationary cases we split the integral as follows (see \ref{secnot}):
\begin{align}
\label{lemlinear2split}
 \begin{split}
 & \int_0^\infty e^{i t \phi(\xi)}  |\xi|^\beta \what{f}(\xi)\,d\xi = C + D ,
\\
& C = \int_0^\infty e^{i t \phi(\xi)}  |\xi|^\beta \what{f}(\xi) (1 - \psi(\xi/\xi_0)) \,d\xi ,
\\
& D = \int_0^\infty e^{i t \phi(\xi)}  |\xi|^\beta \what{f}(\xi) \psi(\xi/\xi_0) \,d\xi .
\end{split}
\end{align}
Integrating by parts we get
\begin{align*}
| C | & \lesssim C_1 + C_2 ,
\\
C_1 & = \frac{1}{t} \int_0^\infty
  \Big| \partial_\xi \Big( \frac{1}{\partial_\xi \phi} |\xi|^\beta (1 - \psi(\xi/\xi_0)) \Big) \Big| |\what{f}(\xi)| \,d\xi ,
\\
C_2 & = \frac{1}{t} \int_0^\infty
  \frac{1}{|\partial_\xi \phi|} |\xi|^\beta (1 - \psi(\xi/\xi_0)) | \partial_\xi \what{f}(\xi) | \,d\xi .
\end{align*}
Using the fact that on the support of the above integrals $| \partial_\xi \phi | \gtrsim \max(\xi,\xi_0)^2$ we obtain
\begin{align*}
C_1 \lesssim \frac{1}{t} {\| \what{f} \|}_{L^\infty}
  \int_0^\infty \left( \frac{|\xi|^{\beta-1}}{\max(\xi,\xi_0)^{2}} (1-\psi(\xi/\xi_0)) + \frac{1}{|\xi|^{2-\beta}} | \psi^\prime(\xi/\xi_0)|  \xi_0^{-1} \right)\, d\xi
  \lesssim t^{-1} \xi_0^{-2+\beta} ,
\end{align*}
%
which is stronger than the desired bound since $\xi_0 \geq t^{-1/3}$. Similarly
\begin{align*}
C_2 \lesssim \frac{1}{t} {\| \partial_\xi \what{f} \|}_{L^2}
  \Big( \int_0^\infty \frac{1}{\max(\xi,\xi_0)^{4-2\beta}} (1-\psi(\xi/\xi_0)) \, d\xi \Big)^{1/2}
  \lesssim t^{-5/6} \xi_0^{-3/2+\beta}
\end{align*}
which suffices since $\xi_0 \geq t^{-1/3}$.

To estimate the resonant contributions $\xi \approx \xi_0$
we let $l_0$ be the smallest integer such that $2^{l_0} \geq 1/\sqrt{t\xi_0}$ and bound the term $D$ in \eqref{lemlinear2split}
as follows:
\begin{align*}
\begin{split}
& | D | \leq \sum_{l=l_0}^{\log \xi_0 + 10} |D_l| ,
\\
& D_{l_0} := \int_{\R} e^{it\phi(\xi)} |\xi|^\beta \what{f}(\xi) \psi(\xi/\xi_0) \chi\big( 2^{-l_0} (\xi-\xi_0) \big) \, d\xi ,
\\
& D_{l} := \int_{\R} e^{it\phi(\xi)} |\xi|^\beta \what{f}(\xi) \psi(\xi/\xi_0) \psi ( 2^{-l} (\xi-\xi_0) ) \, d\xi \, , \qquad l \geq l_0 + 1 .
\end{split}
 \end{align*}
The choice of $l_0$ and the first bound in \eqref{lemlinear2ass} immediately give us
$|D_{l_0}| \lesssim \xi_0^\beta 2^{l_0} \lesssim t^{-1/2} \xi_0^{-1/2+\beta}$.
To control the terms $D_l$ we integrate by parts and see that:
\begin{align*}
| D_l |& \lesssim D_{l,1} + D_{l,2} ,
\\
D_{l,1} & = \frac{1}{t} \int_0^\infty
  \Big| \partial_\xi \Big( \frac{1}{\partial_\xi \phi} |\xi|^\beta \psi(\xi/\xi_0) \psi(2^{-l}(\xi-\xi_0)) \Big) \Big| |\what{f}(\xi)| \,d\xi ,
\\
D_{l,2} & = \frac{1}{t} \int_0^\infty
  \frac{1}{|\partial_\xi \phi|} |\xi|^\beta  \psi(\xi/\xi_0) \psi(2^{-l}(\xi-\xi_0)) | \partial_\xi \what{f}(\xi) | \,d\xi .
\end{align*}
Using the fact that on the support of the integrals $|\partial_\xi \phi| = 3|\xi^2 - \xi_0^2| \approx 2^l \xi_0$,
we can estimate
\begin{align*}
D_{l,1} \lesssim t^{-1} {\| \what{f} \|}_{L^\infty} 2^{-l} \xi_0^{-1+\beta} ,
\end{align*}
which gives the desired bound upon summation in $l$.
Similarly, we have
\begin{align*}
D_{l,2} \lesssim t^{-1} {\| \partial_\xi \what{f} \|}_{L^2} 2^{-l/2} \xi_0^{-1+\beta} .
\end{align*}
Using the second bound in \eqref{lemlinear2ass} and summing in $l$ we obtain a bound of
$t^{-5/6} \xi_0^{-1+\beta} 2^{-l_0/2}$, which is better than our desired bound. This concludes the proof of \eqref{lemlinear21}.
The estimate \eqref{lemlinear22} follows by integrating in $L^p$ the inequality \eqref{lemlinear21}.
\end{proof}

\begin{lemma}[Multilinear Estimates]\label{lembilinear}
Let $u = e^{t\partial_x^3} f$ be a function satisfying the a priori assumptions
\begin{align}
\label{apriorilembilinear}
\sup_{t \geq 1} \big(
  {t}^{-1/6} {\big\| x f(t) \big\|}_{L^2} + {\big\| \widehat{f}(t) \big\|}_{L^\infty}  \big) \leq \e_1 .
\end{align}
Then the following bilinear estimate holds:
\begin{align}
 \label{bilinear}
\sup_{t \geq 1} \, t \, {\| u (t)  u_x(t) \|}_{L^\infty_x} \lesssim \e_1^2 .
\end{align}
Moreover, for all $0 \leq \alpha < \frac{1}{2}$ we have
\begin{equation}
\label{oriole}
{\| |\partial_x|^\alpha u^3(t) \|}_{L^2} \lesssim \e_1^3 |t|^{-5/6 - \alpha/3} .
\end{equation}
\end{lemma}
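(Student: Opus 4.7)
\medskip
\noindent\textbf{Proof proposal for Lemma \ref{lembilinear}.}

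The two estimates are both consequences of the pointwise and $L^p$ bounds from Lemma~\ref{lemlinear2}, so I plan to deduce them directly from that lemma together with Hölder's inequality and a fractional Leibniz rule. The a priori assumption \eqref{apriorilembilinear} on $u=e^{t\pr_x^3}f$ is exactly the hypothesis \eqref{lemlinear2ass} (up to the factor $\e_1$), so all bounds from Lemma~\ref{lemlinear2} are available with an extra overall $\e_1$ for each copy of $u$.

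For the bilinear estimate \eqref{bilinear}, I would simply multiply the two pointwise bounds from Lemma~\ref{lemlinear2}: applying \eqref{lemlinear21} with $\beta=0$ gives
\[
 |u(t,x)| \lesssim \e_1 \, t^{-1/3}\bigl(1+|x/t^{1/3}|\bigr)^{-1/4},
\]
while \eqref{lemlinear21b} gives
\[
 |\pr_x u(t,x)| \lesssim \e_1 \, t^{-2/3}\bigl(1+|x/t^{1/3}|\bigr)^{1/4}.
\]
The spatial weights cancel exactly, leaving $|u\,u_x|\lesssim \e_1^2 \, t^{-1}$ uniformly in $x$, which is \eqref{bilinear}.

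For the trilinear estimate \eqref{oriole}, the plan is to apply the Kato--Ponce fractional Leibniz rule to write
\[
 \bigl\| |\pr_x|^\alpha u^3 \bigr\|_{L^2} \lesssim \bigl\| |\pr_x|^\alpha u \bigr\|_{L^{p_1}} {\|u\|}_{L^{p_2}} {\|u\|}_{L^{p_3}},
 \qquad \tfrac{1}{p_1}+\tfrac{1}{p_2}+\tfrac{1}{p_3}=\tfrac{1}{2},
\]
and then invoke the $L^p$ bound \eqref{lemlinear22} for each factor. The decay one gets from \eqref{lemlinear22} is
\[
 t^{-1/3-\alpha/3+1/(3p_1)}\cdot t^{-1/3+1/(3p_2)}\cdot t^{-1/3+1/(3p_3)}
 \;=\; t^{-1-\alpha/3+1/(3\cdot 2)}
 \;=\; t^{-5/6-\alpha/3},
\]
which is precisely what is claimed. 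I take $p_2=p_3$ for concreteness; the constraints from \eqref{lemlinear22} are $p_1\bigl(\tfrac14-\tfrac{\alpha}{2}\bigr)>1$ and $p_2/4>1$, together with $\tfrac{1}{p_1}+\tfrac{2}{p_2}=\tfrac12$. An elementary check shows that this admissible region is non-empty precisely when $\alpha<1/2$, matching the hypothesis.

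The main (mild) obstacle is twofold: first, to apply Kato--Ponce for $\alpha\in[0,1/2)$ with three factors; I would either invoke the standard Kato--Ponce--Vega commutator inequality iteratively (write $u^3=u\cdot u^2$ and use the product rule twice, bounding one factor in $L^{p_1}$ and the remaining product via Hölder and a further application of Kato--Ponce on $u^2$), or cite the version for three factors. Second, to verify that the range of exponents is actually compatible with the restriction $p(\tfrac14-\tfrac{\beta}{2})>1$ inherited from Lemma~\ref{lemlinear2}; this forces $\alpha<1/2$, and the boundary case $\alpha=1/2$ is genuinely borderline (corresponding to the non-integrability of $|x/t^{1/3}|^{-1/2}$ at infinity), which is why the lemma is stated with strict inequality.
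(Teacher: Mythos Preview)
Your argument is correct and close in spirit to the paper's, with one minor structural difference worth noting.

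For the bilinear bound \eqref{bilinear}, your proof is exactly the paper's: multiply the pointwise estimates \eqref{lemlinear21} (at $\beta=0$) and \eqref{lemlinear21b} and observe that the weights $\langle x/t^{1/3}\rangle^{\mp 1/4}$ cancel.

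For the trilinear bound \eqref{oriole}, both you and the paper start from the fractional Leibniz rule and finish with the $L^p$ estimate \eqref{lemlinear22}, and the time arithmetic $-1-\alpha/3+1/6=-5/6-\alpha/3$ is identical. The difference is in how the factor carrying the $|\partial_x|^\alpha$ is handled. You apply \eqref{lemlinear22} \emph{directly} with $\beta=\alpha$ to $\||\partial_x|^\alpha u\|_{L^{p_1}}$, which is perfectly legitimate: the constraint $p_1(1/4-\alpha/2)>1$ together with $p_2,p_3>4$ and $1/p_1+1/p_2+1/p_3=1/2$ is satisfiable for every $\alpha\in[0,1/2)$ (e.g.\ take $p_2=p_3$ slightly above $4$, forcing $p_1$ large). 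The paper instead inserts a Gagliardo--Nirenberg interpolation step, bounding $\||\partial_x|^\alpha u\|_{L^p}\lesssim \||\partial_x|^\beta u\|_{L^{p_1}}^{\theta}\|u\|_{L^{p_2}}^{1-\theta}$ with an auxiliary $\beta\in(\alpha,1/2)$ and $\theta=\alpha/\beta$, and only then invokes \eqref{lemlinear22}. This extra step is not needed for the argument to close; it just makes the verification of admissible exponents a little more uniform (one sends $p,p_1\to\infty$ for any fixed $\beta$). Your more direct route is slightly cleaner; the paper's buys nothing essential here beyond a different bookkeeping of the indices.

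Your remark that iterating the two-factor Kato--Ponce inequality on $u^3=u\cdot u^2$ suffices (rather than citing a trilinear version) is correct and is in fact what the paper does implicitly: it writes $\||\partial_x|^\alpha u^3\|_{L^2}\lesssim \||\partial_x|^\alpha u\|_{L^p}\|u^2\|_{L^q}$ and then $\|u^2\|_{L^q}=\|u\|_{L^{2q}}^2$.
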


\begin{proof}
To obtain \eqref{bilinear} it suffices to multiply the bounds provided by \eqref{lemlinear21} in the case $\beta=0$ and \eqref{lemlinear22}.

To show \eqref{oriole} we
start by choosing $\beta \in (\alpha,1/2)$, $p,q,p_1,p_2 \in (2,\infty)$, such that
\begin{align}
\label{orioleind}
1/p + 1/q = 1/2 , \,\quad 1/p = \theta/p_1 + (1-\theta)/p_2 \quad \mbox{with} \quad \theta = \alpha/\beta ,
\end{align}
and
$$
\qquad p_1(1/4-\beta/2) > 1, \qquad p_2>4
$$
(to see that it is possible to choose parameters satisfying the above requirements, first fix $\beta \in (\alpha,1/2)$;
the other indexes are then fully determined by $p$ and $p_1$; one checks that the above inequalities are satisfied if $p,p_1 \to \infty$).
We use the fractional Leibniz rule, followed by the Gagliardo-Nirenberg inequality (Theorem 2.44 in\cite{BCD}) to obtain
\begin{align*}
{\| |\partial_x|^\alpha u^3 \|}_{L^2} & \lesssim {\| |\partial_x|^\alpha u \|}_{L^p} {\| u^2 \|}_{L^q}
  \lesssim {\| |\partial_x|^\beta u \|}_{L^{p_1}}^\theta  {\| u \|}_{L^{p_2}}^{1-\theta}  {\| u \|}_{L^{2q}}^2
\end{align*}
Using the linear estimate \eqref{lemlinear22} we have
\begin{align*}
{\| |\partial_x|^\beta u \|}_{L^{p_1}} \lesssim t^{-1/3 - \beta/3 + 1/(3p_1)} ,
\qquad
{\| u \|}_{L^{p_2}} \lesssim t^{-1/3 + 1/(3p_2)} ,
\qquad
{\| u \|}_{L^{2q}} \lesssim t^{-1/3 + 1/(6q)} .
\end{align*}
Using these three inequalities we see that
\begin{align*}
{\| |\partial_x|^\alpha u^3 \|}_{L^2} & \lesssim t^{\gamma}
\end{align*}
where, using \eqref{orioleind}, we have
\begin{align*}
\gamma & = (-1/3 -\beta/3 + 1/(3p_1))\theta + (-1/3 + 1/(3p_2))(1-\theta) + 2(-1/3 + 1/(6q))
  \\ & = -1 + (1/3) \big( \theta(-\beta + 1/p_1) + (1-\theta)/p_2) + 1/(3q) = -5/6 - \alpha/3
\end{align*}
as desired.
\end{proof}

\vskip10pt
\subsection{Energy estimates}\label{secee}

We now prove energy and weighted energy estimates.
\begin{lemma}\label{lemee}
Let $u \in C([0,T); H^1)$ be a solution of \eqref{mKdV} satisfying the apriori bounds \eqref{apriori}.
Then
\begin{align*}
{\| u(t) \|}_{H^1} \leq \e_0  + C \e_1^2 .
\end{align*}
Moreover, if $u = e^{-t\partial_x^3} f$,
\begin{align}
\label{lemee2}
{\| x f(t) \|}_{H^1} \leq C(\e_0 + \e_1^3) \langle t \rangle^{1/6} .
\end{align}
Finally, for $0 \leq \alpha < \frac{1}{2}$,
\begin{align}
\label{lemee3}
{\||\partial_x|^\alpha x f(t) \|}_{L^2} \leq C(\e_0 + \e_1^3) \langle t \rangle^{1/6 - \alpha/3} \, .
\end{align}
\end{lemma}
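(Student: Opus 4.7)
The three bounds follow from energy estimates combined with the multilinear decay bounds of Lemma \ref{lembilinear}. For the $H^1$ bound on $u$, the cleanest route uses mass and energy conservation. Mass gives $\|u(t)\|_{L^2}=\|u_0\|_{L^2}\le\e_0$ directly, and from the conserved Hamiltonian $H$ one has $\|\partial_xu\|_{L^2}^2 = 2H(u_0)+\tfrac12\|u\|_{L^4}^4$, with $|H(u_0)|\lesssim\e_0^2$ by Sobolev and $\|u\|_{L^4}^4\le\|u\|_{L^\infty}^2\|u\|_{L^2}^2\lesssim\e_0^2\e_1^2\,t^{-2/3}$ by the linear estimate \eqref{lemlinear21} and mass conservation; for $t\ge 1$ this yields the desired uniform bound $\|u(t)\|_{H^1}\lesssim\e_0+\e_1^2$. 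Alternatively, avoiding the Hamiltonian structure, a direct energy estimate reduces $\tfrac{d}{dt}\|u\|_{H^1}^2$ after integration by parts to $-6\int uu_x(\partial_xu)^2\,dx$, bounded by $\|uu_x\|_{L^\infty}\|u\|_{H^1}^2\lesssim\e_1^2 t^{-1}\|u\|_{H^1}^2$ via \eqref{bilinear}; Gr\"onwall gives the slightly weaker $\|u(t)\|_{H^1}\lesssim\e_0\langle t\rangle^{C\e_1^2}$, which also suffices for the bootstrap.

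For the weighted bounds, the key device is the conjugated position operator
\[
L := e^{-t\partial_x^3}\,x\,e^{t\partial_x^3} = x-3t\partial_x^2,
\]
which satisfies $[L,\partial_t+\partial_x^3]=0$ and $[L,\partial_x]=-1$, so that $\|xf\|_{L^2}=\|Lu\|_{L^2}$ and $\|x\partial_xf\|_{L^2}=\|L\partial_xu\|_{L^2}=\|\partial_xLu-u\|_{L^2}$. Commuting $L$ through \eqref{mKdV} and using $L\partial_xu^3=\partial_xLu^3-u^3$ gives $(\partial_t+\partial_x^3)Lu=u^3-\partial_xLu^3$. The source contains a term growing linearly in $t$ (through $3t\partial_x^3u^3$ inside $\partial_xLu^3$), so $Lu$ does not close. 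The remedy is the renormalized unknown
\[
w := Lu - 3tu^3,
\]
which a short calculation (using $\partial_tu+\partial_x^3u+\partial_xu^3=0$) shows satisfies exactly the linearized mKdV equation
\[
(\partial_t+\partial_x^3)w + 3u^2\partial_xw = 0, \qquad w|_{t=0}=xu_0.
\]

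Energy estimates on $w$ proceed routinely: integration by parts yields $\tfrac{d}{dt}\|w\|_{L^2}^2 = 6\int uu_xw^2\,dx$, bounded by $\e_1^2 t^{-1}\|w\|_{L^2}^2$ via \eqref{bilinear}; Gr\"onwall together with $\|w(0)\|_{L^2}=\|xu_0\|_{L^2}\le\e_0$ gives $\|w(t)\|_{L^2}\lesssim\e_0\langle t\rangle^{C\e_1^2}$. Differentiating the linearized equation and running the same estimate for $\partial_xw$ (the two quadratic terms $-12\int uu_x(\partial_xw)^2$ and $+6\int uu_x(\partial_xw)^2$ combining cleanly) yields $\|\partial_xw(t)\|_{L^2}\lesssim\e_0\langle t\rangle^{C\e_1^2}$. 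Recombining via $Lu=w+3tu^3$ and $\partial_xLu=\partial_xw+3t\partial_xu^3$, the correction terms are handled by Lemma \ref{lembilinear}: $3t\|u^3\|_{L^2}=3t\|u\|_{L^6}^3\lesssim\e_1^3\langle t\rangle^{1/6}$ using \eqref{lemlinear22} with $p=6$, and $3t\|\partial_xu^3\|_{L^2}\le 9t\|uu_x\|_{L^\infty}\|u\|_{L^2}\lesssim\e_0\e_1^2$ by \eqref{bilinear} and mass conservation. This establishes \eqref{lemee2}.

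For the fractional bound \eqref{lemee3}, rather than a direct $|\partial_x|^\alpha$ energy estimate on $w$ (which would require delicate Kato--Ponce commutator estimates with $u^2$), I would simply interpolate the two $L^2$ bounds already obtained to get $\||\partial_x|^\alpha w\|_{L^2}\lesssim\e_0\langle t\rangle^{C\e_1^2}$. The correction $3t|\partial_x|^\alpha u^3$ is handled directly by the trilinear estimate \eqref{oriole}, giving $3t\||\partial_x|^\alpha u^3\|_{L^2}\lesssim\e_1^3\langle t\rangle^{1/6-\alpha/3}$ precisely in the range $\alpha<1/2$; choosing $\e_1$ small enough that $C\e_1^2<1/6-\alpha/3$ closes the estimate. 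The main conceptual obstacle is identifying the renormalization $w=Lu-3tu^3$ that cancels the linearly growing source produced when $L$ is commuted with the cubic nonlinearity; once the clean linearized mKdV structure is in hand, everything reduces to standard energy-estimate machinery driven by the multilinear decay of Lemma \ref{lembilinear}.
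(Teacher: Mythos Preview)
Your proof is correct, and for the $H^1$ and weighted-$L^2$ bounds it is essentially the paper's argument in different clothing: the paper works with the scaling vector field $S=1+x\partial_x+3t\partial_t$ and the antiderivative $ISu$, while you use the conjugated position operator $L=x-3t\partial_x^2$ and the renormalization $w=Lu-3tu^3$. These are the same object --- one checks $w=ISu$ --- and both satisfy the linearized mKdV equation $(\partial_t+\partial_x^3)w+3u^2\partial_xw=0$, whence the energy estimate closes via \eqref{bilinear}.

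The one genuine difference is in the fractional bound \eqref{lemee3}. You interpolate between the $L^2$ and $\dot H^1$ bounds on $w$; this is short and correct under the stated hypothesis $\langle x\rangle u_0\in H^1$. The paper instead runs a direct energy estimate on $|\partial_x|^{\alpha-1}Su$ using a paraproduct decomposition and the commutator bound of Lemma~\ref{lemmacomm}, specifically so that the argument goes through assuming only $xu_0\in H^\alpha$ for $\alpha<1/2$ (cf.\ Remark~\ref{rempartialalpha}). The paper even acknowledges your shortcut explicitly: ``By using the $H^1$ regularity, a shorter proof is possible (we shall use this argument in the proof of Lemma~\ref{lemfinal} below).'' So your route buys simplicity at the cost of a slightly stronger hypothesis on the data; the paper's route buys the sharper data assumption at the cost of the paraproduct machinery.
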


\begin{proof}
The first estimate follows from the conservation of Mass and Energy \eqref{ME}.
The estimates \eqref{lemee2} and \eqref{lemee3} will be obtained by energy estimates performed on the \eqref{mKdV} equation itself,
or on the equation obtained after commuting the scaling vector field $S := 1 + x\partial_x + 3t \partial_t$, i.e.
\begin{equation}
\label{SmKdV}
\partial_t S u + \partial_x^3 S u + 3 \partial_x (u^2 S u) = 0.
\end{equation}

\bigskip

\noindent
{\it Proof of \eqref{lemee2}}.
In the following, we denote, given a function $a$, $I a$ for the antiderivative of $a$ vanishing at $-\infty$:
$[I a](x) = \int_{-\infty}^x a $.
Applying $I$ to~\eqref{SmKdV} gives
$$
\partial_t I S u + \partial_x^2 S u + 3 u^2 S u = 0.
$$
Multiplying by $I S u$ and integrating in space yields
$$
\frac{1}{2} \frac{d}{dt} {\big\| I S u \big\|}_{L^2}^2 + \int \partial_x^2 S u \,  I S u  \,dx + 3 \int u^2 S u \, I S u \,dx = 0,
$$
or, after integrating by parts, and taking \eqref{bilinear} into account,
$$
\frac{1}{2} \frac{d}{dt} {\big\| I S u \big\|}_{L^2}^2 = \frac{3}{2} \int \partial_x (u^2) \big( I S u \big)^2 \,dx
  \lesssim \frac{\e_1^2}{t} {\big\| I S u \big\|}_{L^2}^2.
$$
By Gronwall's lemma
\begin{align}
\label{boundISu}
{\big\| I S u \big\|}_{L^2}^2 \lesssim \e_0 t^{C \e_1^2}.
\end{align}
Observe now that $xf = I S f - 3t\partial_t I f = e^{t\partial_x^3} I S u + 3 e^{t\partial_x^3}t u^3$,
from which the above inequality, combined with \eqref{oriole}, gives the desired result:
$$
{\| xf \|}_{L^2} \leq  {\| I S u \|}_{L^2} + 3t {\| u^3 \|}_{L^2} \lesssim \e_0 t^{C\e_1^2} + \e_1^3 t^{1/6}.
$$
A similar argument applies to give a bound for ${\| \partial_x xf \|}_{L^2}$ and completes the proof of \eqref{lemee2}.

\bigskip

\noindent
{\it Proof of \eqref{lemee3}}.
As explained in Remark \ref{rempartialalpha}, for the proof here, we do not really need to assume that $xf$ is in $H^1$.
 We shall give here a proof of   \eqref{lemee3} that do not require higher regularity.  By using  the $H^1$ regularity, a shorter proof is possible
 ( we shall use this argument in the proof of Lemma \ref{lemfinal} below to handle the solitary wave stability).
Starting from \eqref{SmKdV}, a simple energy estimate leads to
\begin{align*}
\frac{1}{2} \frac{d}{dt} {\| |\partial_x|^{\alpha-1} S u \|}_{L^2}^2 &
  = -3 \int |\partial_x|^{\alpha -1} \partial_x (u^2 S u) |\partial_x|^{\alpha-1} S u \,dx
\\
& = -3  \int |\partial_x|^{\alpha-1} \partial_x (wv) |\partial_x|^{\alpha-1} v \,dx
\end{align*}
where
\begin{align*}
w = u^2 \quad \mbox{and} \quad v = S u \, .
\end{align*}
Recalling that $w_j = P_j w$ and $v_j = P_j v$, a paraproduct decomposition of the above right-hand side gives
\begin{align*}
\frac{d}{dt} {\| |\partial_x|^{\alpha-1} v \|}_{L^2}^2 & =
  \int |\partial_x|^{\alpha-1} \partial_x \Big( \sum_{2^j \gg 2^k} w_j v_k + \sum_{2^j \sim 2^k} w_j v_k + \sum_{2^k \gg 2^j} w_j v_k \Big)
  |\partial_x|^{\alpha-1} v\,dx
\\
& =: I + II + III.
\end{align*}

\noindent
To estimate $I$, we use the dispersive estimate \eqref{bilinear} and 
the standard properties of the Littlewood-Paley decomposition (see for example \cite{BCD}) to obtain
\begin{align*}
I & \lesssim \Big\| |\partial_x|^{\alpha-1} \sum_{2^k \ll 2^j} \partial_x ( w_j v_k ) \Big\|_{L^2} {\| |\partial_x|^{\alpha-1} v \|}_{L^2}
\\
& \lesssim \left[ \sum_j \Big( 2^{\alpha j} \Big\| \sum_{2^k \ll 2^j} w_j v_k \Big\|_{L^2} \Big)^2 \right]^{1/2} \| |\partial_x|^{\alpha-1} v\|_{L^2} \\
& \lesssim \left[ \sum_j {\Big( \sum_{2^k \ll 2^j }
   2^{(\alpha - 1)j}
  2^j {\| w_j \|}_{L^\infty} 2^{k(1-\alpha)} {\|  |\partial_x|^{\alpha-1} v_k \|}_{L^2} \Big)}^2 \right]^{1/2}
  {\| |\partial_x|^{\alpha-1} v \|}_{L^2}
\\
& \lesssim \frac{\varepsilon_1^2}{\langle t \rangle} \left[ \sum_{j} \left( \sum_{2^k \ll 2^j}  2^{(j - k)(\alpha - 1)}
  {\|  |\partial_x|^{\alpha-1} v_k \|}_{L^2} \right)^2 \right]^{1/2} {\| |\partial_x|^{\alpha-1} v \|}_{L^2}
  \lesssim \frac{\varepsilon_1^2}{\langle t \rangle} {\| |\partial_x|^{\alpha-1} v \|}_{L^2}^2 .
\end{align*}

\noindent
The estimate of $II$ also relies on \eqref{bilinear}:
\begin{align*}
II & \lesssim  {\Big\| |\partial_x|^{\alpha-1} \partial_x \sum_{2^k \sim 2^j} (w_j v_k) \Big\|}_{L^2} {\| |\partial_x|^{\alpha-1} v \|}_{L^2}
\\
& \lesssim \left[ \sum_\ell \Big( 2^{\alpha \ell} \Big\| P_\ell \Big( \sum_{2^k \sim 2^j} w_j v_k \Big) \Big\|_{L^2} \Big)^2 \right]^{1/2} \|  |\partial_x|^{\alpha-1} v \|_{L^2} \\
& \lesssim \left[ \sum_{\ell} {\Big(  \sum_{2^k \sim 2^j \gtrsim 2^\ell} 2^{\alpha \ell}  2^{-\alpha k}
  2^j {\| w_j \|}_{L^\infty}  2^{k(\alpha-1)} {\|v_k \|}_{L^2} \Big)}^2 \right]^{1/2} {\| |\partial_x|^{\alpha-1} v \|}_{L^2}
\\
& \lesssim \frac{\varepsilon_1^2}{\langle t \rangle} \left[ \sum_{\ell} \Big( \sum_{2^k \gtrsim 2^\ell} 2^{\alpha(\ell-k)}
  {\|  |\partial_x|^{\alpha-1} v_k \|}_{L^2} \Big)^2 \right]^{1/2} {\| |\partial_x|^{\alpha-1} v \|}_{L^2}
  \lesssim \frac{\varepsilon_1^2}{\langle t \rangle} {\| |\partial_x|^{\alpha-1} v \|}_{L^2}^2 .
\end{align*}

\noindent
To estimate $III$, we need the classical commutator estimate
\begin{align}
 \label{comm}
{\left\| \left[ |\partial_x|^{\alpha-1} \partial_x , P_{\ll j} w \right] P_j f \right\|}_{L^2}
  \lesssim 2^{(\alpha-1)j} {\|\partial_x w\|}_{L^\infty} {\| P_j f\|}_{L^2} ,
\end{align}
proved in the appendix in Lemma \ref{lemmacomm}.
Commuting $|\partial_x|^{\alpha-1} \partial_x$ with $w_{\ll k}$ in $III$ gives
\begin{align*}
III & = \int \sum_k \left[ |\partial_x|^{\alpha-1} \partial_x , w_{\ll k} \right] v_k \, |\partial_x|^{\alpha-1} v \,dx
  + \int \sum_k w_{\ll k} \,  \partial_ x |\partial_x|^{\alpha-1} \, v_k \, |\partial_x|^{\alpha-1} v \,dx
\\
& =: III_a + III_b
\end{align*}
It follows easily from the commutator estimate \eqref{comm} and \eqref{bilinear} that
\begin{align*}
|III_a| \lesssim {\| \partial_x w \|}_{L^\infty}  {\| |\partial_x|^{\alpha-1} v \|}_{L^2}^2
  \lesssim \frac{\varepsilon_1^2}{\langle t \rangle} {\| |\partial_x|^{\alpha-1} v \|}_{L^2}^2 .
\end{align*}
To estimate $III_b$, we integrate by parts to obtain
\begin{align*}
III_b & = \int \sum_{2^j \sim 2^k} w_{\ll j } \partial_x |\partial_x|^{\alpha-1}  v_j |\partial_x|^{\alpha-1} v_k\,dx \\
& =  \int \sum_{2^j \sim 2^k} \partial_x  w_{\ll j} |\partial_x|^{\alpha-1} v_j |\partial_x|^{\alpha-1} v_k\,dx + \mbox{\{ remainder \}}.
\end{align*}
The remainder is easy to treat, thus we skip it, and the main term is not much harder:
\begin{align*}
|III_b| \lesssim {\| \partial_x w \|}_{L^\infty} {\| |\partial_x|^{\alpha-1} v \|}_{L^2}^2
  \lesssim \frac{\varepsilon_1^2}{t} {\| |\partial_x|^{\alpha-1} v \|}_{L^2}^2 .
\end{align*}

Gathering the estimates on $I$, $II$ and $III$, we obtain
\begin{align*}
\frac{d}{dt} {\| |\partial_x|^{\alpha-1} v \|}_{L^2}^2 \lesssim \frac{\varepsilon_1^2}{t} {\| |\partial_x|^{\alpha-1} v \|}_{L^2}^2,
\end{align*}
which implies, by Gronwall's lemma,
\begin{align*}
{\| |\partial_x|^{\alpha-1} S u \|}_{L^2} = {\| |\partial_x|^{\alpha-1} v \|}_{L^2} \lesssim \varepsilon_0 t^{C \varepsilon_1^2}.
\end{align*}

Finally, observe that
\begin{align*}
{\| |\partial_x|^\alpha xf \|}_{L^2} = {\| |\partial_x|^{\alpha-1}(x\partial_x + 1) f \|}_{L^2}
  & \leq {\| |\partial_x|^{\alpha-1} S f \|}_{L^2} +3 t {\| |\partial_x|^{\alpha-1} \partial_t f \|}_{L^2}
\\
  & = {\| |\partial_x|^{\alpha-1} S u \|}_{L^2} +3 t {\| |\partial_x|^{\alpha} u^3 \|}_{L^2} .
\end{align*}
The desired estimate follows by combining the bound on ${\| |\partial_x|^{\alpha-1} S u \|}_{L^2}$ and \eqref{oriole}.
\end{proof}

\vskip10pt
\subsection{Control of $\sup_t \| \widehat{f}(t) \|_\infty$}\label{secfhat}
This section is dedicated to proving the following key proposition:
\begin{proposition}\label{prohatf}
Under the a priori assumptions \eqref{apriori}, the following estimate holds for a solution $u$ of \eqref{mKdV}:
\begin{align}\label{esthatf}
\sup_t {\big\| \widehat{f}(t) \big\|}_\infty \leq  {\big\| \widehat{u_0} \big\|}_\infty + C\e_1^3 \, .
\end{align}
\end{proposition}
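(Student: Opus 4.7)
The plan is to exploit the fact that the leading resonant contribution to $\partial_t \widehat{f}(t,\xi)$, as computed heuristically in \eqref{d_tf0}, is purely imaginary when paired with $\overline{\widehat{f}(t,\xi)}$. Consequently $\frac{d}{dt}|\widehat{f}(t,\xi)|^2 = 2\Re(\overline{\widehat{f}}\,\partial_t\widehat{f})$ only picks up remainders that should be integrable in $t$, uniformly in $\xi$, with total integral $\lesssim \e_1^{?}$ giving the desired bound.

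Concretely, starting from the trilinear oscillatory integral \eqref{toucan}, I would perform a stationary-phase analysis in $(\eta,\sigma)$ using dyadic Littlewood--Paley cut-offs both for $\xi$ and around each of the four critical points \eqref{statpoint}. Away from the critical set, integration by parts via $|\nabla_{\eta,\sigma}\phi| \neq 0$ (see \eqref{gradphi}) trades oscillation for $\xi$-derivatives on $\widehat{f}$, which are controlled by the a priori bounds ${\|xf\|}_{H^1} \lesssim \e_1 t^{1/6}$ and ${\||\partial_x|^\alpha xf\|}_{L^2} \lesssim \e_1 t^{1/6-\alpha/3}$, together with the multilinear bound \eqref{oriole} for the bulk part. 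The three points $(\xi,\pm\xi),(\pm\xi,\xi)$ contribute the main resonant piece while $(\xi/3,\xi/3)$, having $\phi = (8/9)\xi^3 \neq 0$, contributes a time-oscillating term. The output I would aim for is
\begin{equation*}
\partial_t \widehat{f}(t,\xi) = \frac{i\operatorname{sign}\xi}{6t} |\widehat{f}(t,\xi)|^2 \widehat{f}(t,\xi) + \frac{C}{t} e^{-i(8/9)t\xi^3} \widehat{f}(t,\xi/3)^3 + \mathcal{R}(t,\xi),
\end{equation*}
where $\mathcal{R}$ satisfies $\int_1^\infty {\|\mathcal{R}(t)\|}_{L^\infty_\xi}\,dt \lesssim \e_1^3$. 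I would then introduce the phase correction $g(t,\xi) := \widehat{f}(t,\xi)\exp\bigl(-\frac{i\operatorname{sign}\xi}{6}\int_1^t |\widehat{f}(s,\xi)|^2 \frac{ds}{s}\bigr)$, noting $|g| = |\widehat{f}|$ and
\begin{equation*}
\partial_t g(t,\xi) = e^{-i\operatorname{sign}\xi\,(\cdots)}\Bigl(\tfrac{C}{t} e^{-i(8/9)t\xi^3} \widehat{f}(t,\xi/3)^3 + \mathcal{R}(t,\xi)\Bigr).
\end{equation*}
The $\mathcal{R}$ piece integrates directly to give $\lesssim \e_1^3$. For the oscillatory main piece I would integrate by parts in $t$: one IBP produces a $1/(\xi^3 t)$ factor plus a derivative on $\widehat{f}(\xi/3)^3$ that, by the equation, is again $O(\e_1^3/t)$, hence integrable. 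Integrating from $1$ to $t$ then yields $|g(t,\xi) - g(1,\xi)| \leq C\e_1^3$, and combined with the local-in-time control on $[0,1]$ this gives \eqref{esthatf}.

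The main obstacle is obtaining uniform-in-$\xi$ control, in particular around $\xi = 0$, because the critical point $(\xi/3,\xi/3)$ coalesces with the resonant points $(\pm\xi,\pm\xi)$ as $\xi \to 0$, so the naive stationary-phase asymptotics with amplitude $|\xi|^{-1}$ break down and the $1/\xi^3$ produced by integration by parts in $t$ becomes singular. I would handle the small-$\xi$ regime separately: for $|\xi|$ below a $t$-dependent threshold, abandon the asymptotic expansion and instead use direct estimates on the trilinear integral together with the uniform boundedness of $\widehat{f}$, exploiting the merger of the critical points to re-gain a $\xi$-independent gain. A related technical point is producing the extra $t^{-\delta}$ decay on $\mathcal{R}$; this is where the full strength of the $\||\partial_x|^\alpha xf\|_{L^2}$ bound is used, in the spirit of the trilinear improvement \eqref{oriole} that already trades $H^\alpha$-regularity on $xf$ for an $\alpha/3$ gain in $t$.
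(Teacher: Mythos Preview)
Your proposal is correct and follows essentially the same approach as the paper: derive the asymptotic expansion for $\partial_t\widehat{f}$ via stationary phase, introduce the modified profile $g=\widehat{w}=e^{-iB}\widehat{f}$ to kill the resonant term, and handle the oscillatory $(\xi/3,\xi/3)$ contribution by integration by parts in $t$. The paper makes your small-$\xi$ fix explicit by inserting an indicator $\mathbf{1}_{|\xi|>t^{-1/3}}$ into the oscillatory term of the expansion (absorbing the complementary regime $|\xi|\lesssim t^{-1/3}$ directly into the remainder via a crude bound), so that the time integral of that piece effectively starts at $|\xi|^{-3}$ and the $|\xi|^{-3}$ from the IBP boundary term is exactly compensated---this is precisely the ``$t$-dependent threshold'' you anticipated.
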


\begin{proof}
We will show the following key identity: for $t>1$,
\begin{align}
\label{d_tf=}
\partial_t \what{f}(t,\xi) = \frac{i \operatorname{sign} \xi}{6t} |\what{f}(t,\xi)|^2 \widehat{f}(t,\xi)
  + \frac{c}{t} e^{it \frac{8}{9} \xi^3} \Big[ \mathbf{1}_{|\xi|>t^{-1/3}} \what{f}(t,\xi/3) \Big]^3 
  + R(t,\xi) \, ,
\end{align}
where $c \in \C$ is a constant, and $R$ satisfies the bound
\begin{align}
\label{estR}
\int_{-\infty}^{\infty} |R(t,\xi)| \,dt \lesssim \e_1^3 \, .
\end{align}
The proofs of \eqref{d_tf=} and \eqref{estR} above will be given in Section \ref{keysec} below;
let us first show how \eqref{d_tf=}-\eqref{estR} imply the desired conclusion \eqref{esthatf}.
Define the modified profile $\what{w}$ as follows:
\begin{align}
\label{modprof}
\what{w}(t,\xi) := e^{- i B(t,\xi)} \what{f}(t,\xi)  \qquad \, \quad B(t,\xi)
:= \frac{1}{6} \operatorname{sign} \xi \int_1^t \big|\what{f}(s,\xi) \big|^2 \frac{ds}{s} \, .
\end{align}
Then we have
\begin{align*}
\partial_t \what{w}(t,\xi) & = e^{- i B(t,\xi)} \left[ \partial_t \what{f}(t,\xi) - i\partial_t B(t,\xi)\what{f}(t,\xi) \right]
\\
& = e^{- i B(t,\xi)} \left\{ \frac{c}{t} e^{it \frac{8}{9} \xi^3}
  \Big[ \mathbf{1}_{|\xi|>t^{-1/3}} \what{f}(t,\xi/3) \Big]^3 \, + R(t,\xi) \right\}
\end{align*}
Integrating in time the above identity, using the fact that $B$ is real, $|\what{w}(t,\xi)| = |\what{f}(t,\xi)|$,
and the remainder estimate \eqref{estR}, we obtain
\begin{align*}
\big| \what{f}(t,\xi) \big| & \leq
\left| \widehat{u_0}(\xi) \right|
  + c \left| \int_{|\xi|^{-3}}^t e^{is \frac{8}{9} \xi^3} e^{-iB(s,\xi)} \mathbf{1}_{|\xi|>s^{-1/3}} \widehat{f} (s,\xi/3)^3 \frac{ds}{s} \right| + \e_1^3 \, .
\end{align*}
The desired conclusion will then follow once we show that, for $t>|\xi|^{-3}$,
\begin{align}
\label{estextraR}
\Big| \int_{|\xi|^{-3}}^t e^{is \frac{8}{9} \xi^3} e^{-iB(s,\xi)} \widehat{f} (s,\xi/3)^3 \frac{ds}{s} \Big| \lesssim \e_1^3 \, .
\end{align}

\begin{proof}[Proof of \eqref{estextraR}]
Integrating by parts in $s$ using the identity $e^{is\frac{8}{9}\xi^3} = \frac{9}{8i\xi^3} \partial_s e^{is\frac{8}{9}\xi^3}$, we see that
\begin{align}
 \label{estextraRIBP}
\begin{split}
 \Big| & \int_{|\xi|^{-3}}^t e^{is \frac{8}{9} \xi^3} e^{-iB(s,\xi)} \widehat{f} (s,\xi/3)^3 \frac{ds}{s} \Big|
  \lesssim J + K + L + M
\\
& J = \frac{1}{|\xi|^3} |\widehat{f} (s,\xi/3) |^3 \frac{1}{s} \,\, \Big|^{s=t}_{s=|\xi|^{-3}} \, ,
\\
& K = \int_{|\xi|^{-3}}^t \frac{1}{|\xi|^3} |\partial_s \widehat{f} (s,\xi/3) |  |\widehat{f} (s,\xi/3)|^2 \frac{ds}{s} \, ,
\\
& L = \int_{|\xi|^{-3}}^t \frac{1}{|\xi|^3} | \partial_s B(s,\xi)|  |\widehat{f} (s,\xi/3)|^3 \frac{ds}{s} \, ,
\\
& M = \int_{|\xi|^{-3}}^t \frac{1}{|\xi|^3} |\widehat{f} (s,\xi/3)|^3 \frac{ds}{s^2} \, .
\end{split}
\end{align}
Since $t \geq |\xi|^{-3}$, the a priori assumption ${\|\what{f}(t)\|}_{L^\infty} \leq \e_1$ gives immediately
that $J \lesssim \e_1^3$.
Using again ${\|\what{f}(t)\|}_{L^\infty}  \leq \e_1$, and \eqref{d_tf=}-\eqref{estR} we can estimate
\begin{align*}
& K \lesssim \int_{|\xi|^{-3}}^t \frac{1}{|\xi|^3} \Big[ \frac{\e_1^3}{s} +  R(s,\xi) \Big] \e_1^2 \frac{ds}{s}  \lesssim \e_1^5 \, .
\end{align*}
From the definition of $B$ in \eqref{modprof} we see that
\begin{align*}
& L \lesssim \int_{|\xi|^{-3}}^t \frac{1}{|\xi|^3} \frac{\e_1^2}{s} \e_1^3 \frac{ds}{s}  \lesssim \e_1^5 \, .
\end{align*}
The last term, $M$, is easily bounded by $|\xi|^{-3} \epsilon_1^3 \int_{|\xi|^{-3}}^t \frac{ds}{s^2} \lesssim \epsilon_1^3$, which completes the proof of \eqref{estextraR}.
\end{proof}

\vskip10pt
\subsection{Proof of \eqref{d_tf=}-\eqref{estR}}\label{keysec}
Recall that we assume $t>1$.

\vskip10pt
\noindent
\underline{\it Some elementary estimates on $f$.}
Recall that $f_j = P_j f$; we start by stating a few estimates on $f_j$ that follow immediately from the a priori assumption \eqref{apriori}:
\begin{equation}
\label{bullfinch}
\begin{split}
& {\| \widehat{f}_j \|}_{L^\infty} \lesssim \e_1 ,
\\
& {\| xf_j \|}_{L^2} \lesssim
  \big[ 2^{-\frac{j}{2}} + \min \big(t^{\frac{1}{6}}, 2^{-\alpha j} t^{\frac{1}{6} - \frac{\alpha}{3}} \big) \big] \e_1,
\\
& {\| f_j \|}_{L^1} \lesssim {\|f_j\|}_{L^2}^{1/2} {\|xf_j\|}_{L^2}^{1/2} \lesssim \big( 1 + 2^{\frac{j}{4}} t^{\frac{1}{12}} \big) \e_1
\\
& {\|f_j\|}_{L^1} \lesssim \big( 1 + 2^{j(\frac{1}{4} - \frac{\alpha}{2})} t^{\frac{1}{12} - \frac{\alpha}{6}} \big) \e_1,
\\
& {\| |x|^{2\rho} f_j \|}_{L^1} \lesssim {\| f_j \|}_{L^2}^{\frac{1}{2} - 2 \rho} {\| xf_j \|}_{L^2}^{\frac{1}{2} + 2\rho}
  \lesssim \left( 2^{-2 \rho j} + 2^{j ( \frac{1}{4} - \rho )} t^{\frac{1}{12} + \frac{\rho}{3}} \right) \e_1,
  \quad \mbox{for} \quad 0 \leq \rho < \frac{1}{4} .
\end{split}
\end{equation}
Moreover, if $2^j \geq t^{-\frac{1}{3}}$, then $f_{<j}$ satisfies
\begin{align}
\begin{split}
 \label{bullfinch2}
& {\| f_{<j} \|}_{L^1} \lesssim 2^{\frac{j}{4}} t^{\frac{1}{12}} \e_1
\\
& {\| f_{<j} \|}_{L^1} \lesssim 2^{j(\frac{1}{4} - \frac{\alpha}{2})} t^{\frac{1}{12} - \frac{\alpha}{6}} \e_1
\\
& {\| |x|^{2\rho} f_{<j} \|}_{L^1} \lesssim 2^{j(\frac{1}{4} - \rho)} t^{\frac{1}{12} + \frac{\rho}{3}} \e_1,
  \quad \mbox{for} \quad 0 \leq \rho < \frac{1}{4} .
\end{split}
\end{align}
Let us prove the first inequality, the proofs of the other ones being similar. Observe that, by the a priori assumption \eqref{apriori},
\begin{align*}
& {\| f_{<-\frac{1}{3} \log_2 t} \|}_{L^2} \lesssim \e_1 t^{-1/6} ,
\\
& {\| x f_{<-\frac{1}{3} \log_2 t} \|}_{L^2} =
  {\| \partial_\xi [ \chi(t^{1/3} \xi) \widehat{f}(\xi)] \|}_{L^2} \leq t^{1/3}
  {\| \chi'(t^{1/3} \xi) \widehat{f}(\xi) \|}_{L^2} + {\| \chi(t^{1/3} \xi) \partial_\xi \widehat{f}(\xi) \|}_{L^2} \lesssim \e_1 t^{1/6} ,
\end{align*}
and estimate
\begin{align*}
{\| f_{<j} \|}_{L^1} & \leq {\| f_{< -\frac{1}{3} \log_2 t} \|}_{L^1} + \sum_{t^{-1/3} \leq 2^k \leq 2^j } {\| f_k \|}_{L^1}
\\
& \lesssim {\| f_{< -\frac{1}{3} \log_2 t} \|}_{L^2}^{1/2} {\| x f_{<-\frac{1}{3} \log_2 t} \|}_{L^2}^{1/2}
  + \sum_{t^{-1/3} \leq 2^k \leq 2^j } 2^{\frac{k}{4}} t^{\frac{1}{12}} \e_1
\\
& \lesssim \big( 1 + 2^{\frac{j}{4}} t^{\frac{1}{12}} \big) \e_1 \lesssim 2^{\frac{j}{4}} t^{\frac{1}{12}} \e_1.
\end{align*}

\bigskip

\noindent \underline{\it Decomposition of $\partial_t \widehat{f}$.}
Assume that $|\xi| \in (2^j,2^{j+1})$ and split
\begin{equation}
 \label{d_tf=0}
\begin{split}
&\partial_t \widehat{f}(t,\xi)  = - \frac{i}{2\pi} \iint e^{-it\phi(\xi,\eta,\sigma)} \xi \widehat{f}(\xi-\eta-\sigma) \widehat{f}(\eta) \widehat{f}(\sigma)\,d\eta\,d\sigma \\
& = - \sum_{k,l} \underbrace{\frac{i}{2\pi} \iint e^{-it\phi(\xi,\eta,\sigma)} \xi \widehat{f}(\xi-\eta-\sigma)
\widehat{f}(\eta) \widehat{f}(\sigma) \psi\left( \frac{\eta}{2^k} \right) \psi \left( \frac{\sigma}{2^\ell}
\right) \,d\eta\,d\sigma}_{A_{k \ell}}
\\
& = - \Big[ \underbrace{\sum_{\substack{2^k, 2^\ell \lesssim 2^j \\ 2^j > t^{-1/3}}} A_{k \ell}}_{I}
+  \underbrace{\sum_{\substack{2^k \sim 2^\ell \gg 2^j \\ 2^k > t^{-1/3}}} A_{k \ell}}_{II}
+ \underbrace{\sum_{\substack{2^k \gg  2^\ell, 2^j\\ 2^k > t^{-1/3}}} A_{k \ell}}_{III}
+  \underbrace{\sum_{2^j, 2^k, 2^\ell \lesssim t^{-1/3}} A_{k \ell}}_{IV} \Big] + \left\{ \begin{array}{l} \mbox{symmetric} \\ \mbox{ terms} \end{array} \right\}.
\end{split}
\end{equation}

\bigskip

\noindent \underline{\it Contribution of $I$.}
It can be written
\begin{equation*}
\begin{split}
I & = \frac{i}{2\pi} \iint e^{-it\phi(\xi,\eta,\sigma)} \xi \widehat{f_{\lesssim j}}(\xi-\eta-\sigma) \widehat{f_{\lesssim j}}(\eta)
\widehat{f_{\lesssim j}}(\sigma) \chi \left( \frac{\eta}{C2^j} \right) \chi \left( \frac{\sigma}{C2^j} \right) \,d\eta\,d\sigma
\\
& = \frac{2^{3j} i}{2\pi} \iint e^{-it2^{3j} \phi(\xi',\eta',\sigma')} \xi' \widehat{f_{\lesssim j}}(2^j(\xi'-\eta'-\sigma'))
\widehat{f_{\lesssim j}}(2^j \eta') \widehat{f_{ \lesssim j}}(2^j \sigma') \chi  \left( \frac{\eta'}{C} \right) \chi  \left( \frac{\sigma'}{C} \right)  \,d\eta' \,d\sigma'
\end{split}
\end{equation*}
where we changed variables by setting $(\xi',\eta',\sigma') = 2^{-j}(\xi,\eta,\sigma)$. This can also be written
$$
I = \frac{2^{3j} i}{2\pi} \iint e^{-it2^{3j} \phi(\xi',\eta',\sigma')} \xi' F(\eta',\sigma') \chi  \left( \frac{\eta'}{C} \right)  \chi  \left( \frac{\sigma'}{C} \right) \,d\eta'\,d\sigma'
$$
with
$$
F(\eta',\sigma') = \widehat{f_{\lesssim j}}(2^j(\xi'-\eta'-\sigma')) \widehat{f_{\lesssim j}}(2^j \eta') \widehat{f_{\lesssim j}}(2^j \sigma') \, ,
$$
and where $|\xi'| \sim 1$.
Applying Lemma \ref{pinguin2}, in light of \eqref{statpoint0}--\eqref{statphase}, we get, for $|\xi| \geq t^{-1/3}$,
\begin{equation}
\label{eider1}
I = \frac{i \operatorname{sign} \xi}{6t} |\widehat{f}(\xi)|^2 \widehat{f}(\xi)
+ \frac{ic}{t} e^{-it \frac{8}{9} \xi^3} \widehat{f} \left( \frac{\xi}{3} \right)^3
+ 2^{3j} O \left( \frac{ {\| \langle (x,y) \rangle^{2\rho} \widehat{F} \|}_{L^1}}{(2^{3j} t)^{1+\rho}} \right),
\end{equation}
where $c$ is a constant whose exact value will not matter.
Now observe that
$$
\widehat{F}(x,y) = \frac{2^{-3j}}{2\pi} \int e^{-iz\xi} f_{\lesssim j}(2^{-j}(z-x)) f_{\lesssim j}(2^{-j}z) f_{\lesssim j}(2^{-j}(y-z))\,dz
$$
so that
\begin{equation}
\label{eider2}
{\|\widehat{F}\|}_{L^1} \lesssim {\|f_{\lesssim j}\|}_{L^1}^3
  \qquad \mbox{and} \qquad
  {\| |(x,y)|^{2\rho} \widehat{F} \|}_{L^1} \lesssim 2^{2\rho j} {\| f_{\lesssim j} \|}_{L^1}^2 {\| |x|^{2\rho} f_{\lesssim j} \|}_{L^1} .
\end{equation}
Combining \eqref{eider1} and \eqref{eider2} above, and using \eqref{bullfinch2} gives
\begin{align}
\label{bound1}
\begin{split}
& \Big| I - \frac{i\operatorname{sign} \xi}{6t} |\widehat{f}(\xi)|^2 \widehat{f}(\xi)
- \frac{ic}{t} e^{-it \frac{8}{9} \xi^3} \widehat{f} \left( \frac{\xi}{3} \right)^3 \mathbf{1}_{|\xi|>t^{-1/3}} \Big|
\\
& \qquad \lesssim 2^{-3 \rho j} t^{-1-\rho} ( \|f_{\lesssim j} \|_{L^1}^3 + 2^{2\rho j} \| f_{\lesssim j} \|_{L^1}^2 {\| | x |^{2\rho} f_{\lesssim j} \|}_{L^1})
\\
& \qquad \lesssim 2^{j(\frac{3}{4} - 2\rho - \alpha)} t^{-\frac{3}{4} - \frac{2}{3}\rho - \frac{\alpha}{3}} \e_1^3 .
\end{split}
\end{align}
Recall that $\alpha$ is close to, but less than, $\frac{1}{2}$. Choosing $\rho$ close to, but less than, $\frac{1}{4}$, we get that
$\frac{3}{4} - 2\rho - \alpha =: - \kappa < 0$, and  $-\frac{3}{4} - \frac{2}{3}\rho - \frac{\alpha}{3} = -1 - \frac{\kappa}{3}$.
It follows that
\begin{align*}
\int_{2^{-3j}}^\infty \Big| I - \frac{i\operatorname{sign} \xi}{6t} |\widehat{f}(\xi)|^2 \widehat{f}(\xi)
- \frac{ic}{t} e^{-it \frac{8}{9} \xi^3} \widehat{f} \Big( \frac{\xi}{3} \Big)^3 \mathbf{1}_{|\xi|>t^{-1/3}} \Big| \, ds
\\
  \lesssim \e_1^3 \int_{2^{-3j}}^\infty  2^{- \kappa j}
  s^{-1 - \frac{\kappa}{3}} \,ds \lesssim \e_1^3 .
\end{align*}

\bigskip

\noindent \underline{\it Contribution of $II$.}
We essentially follow the same approach as for $I$, and keep in particular the same value for $\rho$ and $\alpha$. A change of variables gives
$$
II = \sum_{\substack{2^k \sim 2^\ell \gg  2^j \\ 2^k > t^{-1/3}}}
  \frac{2^{2k}i}{2\pi} \iint e^{-it2^{3k} \phi((2^{-k} \xi,\eta,\sigma)} \xi \widehat{f_{\lesssim k}}(\xi-2^k(\eta+\sigma))
  \widehat{f_{\lesssim k}}(2^k \eta) \widehat{f_{\lesssim k}}(2^k \sigma) \psi
  \left( \eta \right) \psi \left(\sigma\right) \,d\eta\, d\sigma.
$$
Due to the absence of stationary points, Lemma \ref{pinguin2} implies
$$
|II| \lesssim \sum_{\substack{2^k  \gg  2^j \\ 2^k > t^{-1/3}}}
    2^{j}2^{2k} \frac{ {\| \langle (x,y) \rangle^{\rho} \widehat{F_k} \|}_{L^1}}{(2^{3k} t)^{1+\rho}}
$$
where
$$
F_k(\eta,\sigma) = \widehat{f_{\lesssim k}}(\xi-2^k(\eta+\sigma)) \widehat{f_{\lesssim k}}(2^k \eta) \widehat{f_{\lesssim k}}(2^k \sigma)
$$
and, as above,
$$
{\| |(x,y)|^{\rho} \widehat{F_k} \|}_{L^1} \lesssim 2^{\rho k} {\|f_{\lesssim k}\|}_{L^1}^2 {\||x|^{\rho}f_{\lesssim k}\|}_{L^1}.
$$
As before, using \eqref{bullfinch2}, this leads to
\begin{align}
\label{bound2}
\begin{split}
|II| & \lesssim  \sum_{\substack{2^k \gg  2^j \\ 2^k > t^{-1/3}}} 2^j 2^{2k} \frac{1}{(2^{3k} t)^{1+\rho}} ( 2^{\rho k}
  {\|f_{\lesssim k}\|}_{L^1}^2 {\| | x |^{\rho} f_{\lesssim k} \|}_{L^1} +  {\|f_{\lesssim k}\|}_{L^1}^3 )
\\
& \lesssim  \sum_{\substack{2^k \gg  2^j \\ 2^k > t^{-1/3}}}
2^j 2^{k(-\frac{1}{4} - \frac{5}{2}\rho - \alpha)}  t^{-\frac{3}{4} - \frac{5\rho}6 - \frac{\alpha}{3}} \e_1^3 ,
\end{split}
\end{align}
and thus since $\frac{3}{4} + \frac{5 \rho}{6} + \frac{\alpha}{3} > 1$,
$$
\int_0^\infty |II|\,ds \lesssim \e_1^3
  \int_0^\infty 2^j s^{-\frac{3}{4} - \frac{5\rho}6 - \frac{\alpha}{3}} \max(2^{j},s^{-1/3})^{-\frac{1}{4} - \frac{5}2\rho - \alpha} \, ds \lesssim \e_1^3.
$$

\bigskip

\noindent \underline{\it Contribution of $III$.} For the summands in $III$, $2^k \gg 2^\ell , 2^j$, thus $|\eta|$
is the largest variable and we can write $III = \sum_k A_k$, with
\begin{align*}
A_{k}(\xi) = \frac{i}{2\pi} \iint e^{-it\phi(\xi,\eta,\sigma)} \xi \widehat{f_{\sim k}}(\xi-\eta-\sigma)
\widehat{f_{\sim k}}(\eta) \widehat{f_{\ll k}}(\sigma) \psi\left( \frac{\eta}{2^k} \right) \chi \left( \frac{\sigma}{2^k} \right) \,d\eta\,d\sigma \, .
\end{align*}
On the support of the integrand, $|\partial_\sigma \phi| \sim 2^{2k}$ and
\begin{align}
\label{1/d_siphi}
\left| \partial_\eta^{m_1} \partial_\sigma^{m_2} \frac{1}{\partial_\sigma \phi(\xi,\eta,\sigma)} \right|
  \lesssim 2^{-2k} 2^{-(m_1+m_2)k}
\end{align}
for all integers $m_1,m_2 \in \{ 0, \dots, 10\}$.
We then integrate by parts in $\sigma$ to get
\begin{align*}
\nn
III & = \sum_{2^k \gg  2^j,t^{-1/3}} III_{k}^{(1)} +  III_{k}^{(2)} +  III_{k}^{(3)}
\\
III_{k}^{(1)} & := \frac{i}{2\pi} \iint e^{-it\phi(\xi,\eta,\sigma)} \frac{\xi}{it\partial_\sigma \phi} \, \partial_\si \widehat{f_{\sim k}}(\xi-\eta-\sigma)
\widehat{f_{\sim k}}(\eta) \widehat{f_{\ll k}}(\sigma) \psi\left( \frac{\eta}{2^k} \right) \chi\left( \frac{\sigma}{2^k} \right) \,d\eta\,d\sigma \, ,
\\
III_{k}^{(2)} & := \frac{i}{2\pi} \iint e^{-it\phi(\xi,\eta,\sigma)} \frac{\xi}{it\partial_\sigma \phi} \widehat{f_{\sim k}}(\xi-\eta-\sigma)
\widehat{f_{\sim k}}(\eta) \, \partial_\si \widehat{f_{\ll k}}(\sigma) \psi\left( \frac{\eta}{2^k} \right) \chi \left( \frac{\sigma}{2^k} \right) \,d\eta\,d\sigma \, ,
\\
III_{k}^{(3)} & := \frac{i}{2\pi} \iint e^{-it\phi(\xi,\eta,\sigma)} \xi \,
  \partial_\si \left[ \frac{1}{it\partial_\sigma \phi} \psi\left( \frac{\eta}{2^k} \right) \chi \left( \frac{\sigma}{2^k} \right) \right]
  \widehat{f_{\sim k}}(\xi-\eta-\sigma) \widehat{f_{\sim k}}(\eta) \widehat{f_{\ll k}}(\sigma)
  \,d\eta\,d\sigma \, .
\end{align*}
From \eqref{1/d_siphi} and Lemma \ref{penguin3} it follows that
\begin{align}
\label{bound31}
\begin{split}
\left| III_{k \ell}^{(1)} \right| + \left| III_{k \ell}^{(2)} \right| & \lesssim
  \frac{2^j}{t 2^{2k}} \left[ {\| \partial \widehat{f_{\sim k}} \|}_{L^2}  {\| \widehat{f_{\ll k}} \|}_{L^2}
  + {\| \widehat{f_{\sim k}} \|}_{L^2}  {\| \partial \widehat{f_{\ll k}} \|}_{L^2} \right] {\| u_{\sim k} \|}_{L^\infty}
\lesssim \frac{2^j}{t^{7/6} 2^{3k/2}}\epsilon_1^3.
\end{split}
\end{align}
This gives the desired estimate after summing and integrating in time:
\begin{align*}
\int_0^\infty \sum_{\substack{2^k \gg 2^j  \\ 2^k > t^{-1/3}}} |III_{k}^{(1)}| + |III_{k}^{(2)}| \,ds  \lesssim \e_1^3 \int_{2^{-3k}}^\infty
  \frac{2^j}{s^{7/6}\max(2^j,s^{-1/3})^{3/2}} \, ds
  \lesssim \e_1^3 .
\end{align*}
The remaining term can be estimated similarly using again \eqref{1/d_siphi} and Lemma \ref{penguin3}:
\begin{align}
 \label{bound32}
\left| III_{k}^{(3)} \right|  \lesssim \frac{2^j}{t 2^{3k}} {\| \widehat{f_{\sim k}} \|}_{L^2} {\| \widehat{f_{\ll k}} \|}_{L^2} {\| u_{\sim k} \|}_{L^\infty}
\lesssim\frac{2^j}{t^{4/3} 2^{2k}} \e_1^3,
\end{align}
which gives the desired bound upon summation and time integration since
$$
\int_{2^{-3k}}^\infty \frac{2^j}{s^{4/3} \max(2^j,s^{-1/3})^2} \,ds \lesssim 1.
$$

\bigskip

\noindent
\underline{\it Contribution of $IV$.} Using simply $\|\widehat{f}\|_\infty \leq \varepsilon_1$, the term $IV$ can be estimated by
$$
|IV| \lesssim \sum_{2^j,2^k,2^\ell < t^{-1/3}} 2^{j+k+l} \varepsilon_1^3 \lesssim 2^j t^{-2/3} \mathbf{1}_{t<2^{-3j}} \varepsilon_1^3,
$$
which gives the desired result after time integration.
\end{proof}

\vskip10pt
\subsection{Asymptotics}\label{secasy}

In this section we derive the asymptotic behavior of solutions of \eqref{mKdV} as time goes to infinity.
%
%
%
We are going to show the following:

\begin{proposition}[Asymptotics for small solutions]
\label{proasy}
Let $u$ be a solution of \eqref{mKdV} satisfying the global bounds \eqref{apriori}-\eqref{conc}.
Then, for any $t\geq 2$, the following holds:

\setlength{\leftmargini}{1.5em}
\begin{itemize}

\item In the region $x \geq t^{1/3}$ we have the decay estimate
\begin{align}
\label{prouasy1}
| u(t,x) | \lesssim  \frac{\e_0}{t^{1/3} (x/t^{1/3})^{3/4}} ;
\end{align}

\item In the region $|x| \leq t^{1/3+2\gamma}$, with $\gamma = 1/3 ( 1/6 - C\e_1^2)$, the solution is approximately self-similar:
\begin{align}
\label{prouasy2}
\big| u(t,x) - \frac{1}{t^{1/3}} \varphi\big( \frac{x}{t^{1/3}} \big) \big| \lesssim  \frac{\e_0}{t^{1/3 + 3\gamma/2}} ,
\end{align}
where $\varphi$ is a bounded solution of the Painlev\'e II equation
\begin{align}
 \label{painleve}
\varphi^{\prime\prime} - 3\xi \varphi + \varphi^3 = 0 , \qquad  \mathrm{p.v.} \int_\R \varphi(x) \, dx = \int_\R u_0(x) \, dx .
\end{align}

\item In the region $x \leq - t^{1/3+2\gamma}$, the solution has a nonlinearly modified asymptotic behavior:
there exists $f_{\infty} \in L^{\infty}_\xi$ such that
\begin{align}
\label{prouasy3}
\Big| u(t,x) - \frac{1}{\sqrt{3 t \xi_0}} \Re
  \exp \Big( -2 i t \xi_0^3 + \frac{i\pi}{4} + \frac{i}{6}|f_\infty (\xi_0) |^2 \log t \Big) f_\infty (\xi_0) \Big|
  \leq \frac{\e_0}{t^{1/3} (-x/t^{1/3})^{3/10}} , 
\end{align}
where $\xi_0 := \sqrt{-x/(3t)}$, and $\Re$ denotes the real part.
\end{itemize}

\end{proposition}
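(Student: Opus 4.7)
\medskip
\noindent
\textbf{Overall strategy.} The starting point is the Fourier integral representation
\[
u(t,x) = \frac{1}{\sqrt{2\pi}} \int_\R e^{i(x\xi + t\xi^3)} \widehat{f}(t,\xi)\,d\xi,
\]
whose phase $\phi(\xi) = (x/t)\xi + \xi^3$ has stationary points $\pm\xi_0 = \pm\sqrt{-x/(3t)}$ for $x \leq 0$ and none for $x > 0$. The three regions of \eqref{prouasy1}--\eqref{prouasy3} correspond to (a) no stationary point, (b) stationary points close to the endpoint $\xi = 0$ merging into self-similar Painlev\'e~II behavior, and (c) isolated stationary points $\xi_0 \gg t^{-1/3}$ with standard stationary phase plus a nonlinear phase correction. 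The basic inputs are the a priori bounds \eqref{apriori}--\eqref{conc} together with the key identity \eqref{d_tf=}, the remainder estimate \eqref{estR}, and the modified profile \eqref{modprof}.

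\medskip
\noindent
\textbf{Easier regions.} For $x \geq t^{1/3}$ one has $|\phi'(\xi)| \gtrsim \max(\xi^2, x/t)$; a refinement of the argument in Lemma \ref{lemlinear2} via a dyadic split at $|\xi| \sim \sqrt{x/t}$, combined with the extra $|\partial_x|^\alpha$ control from \eqref{lemee3}, yields \eqref{prouasy1}. For $x \leq - t^{1/3+2\gamma}$, I apply standard stationary phase at $\pm\xi_0$; using the reality of $u$ and the symmetry $\widehat{f}(-\xi) = \overline{\widehat{f}(\xi)}$ the two contributions combine into
\[
\frac{1}{\sqrt{3t\xi_0}} \Re\!\bigl[\,e^{-2it\xi_0^3 + i\pi/4}\, \widehat{f}(t,\xi_0)\bigr],
\]
with stationary-phase remainder controlled by the weighted bounds $\|xf\|_{H^1} \lesssim \e_0 t^{1/6}$ and $\||\partial_x|^\alpha xf\|_{L^2} \lesssim \e_0 t^{1/6-\alpha/3}$, producing the error $\e_0 t^{-1/3}(-x/t^{1/3})^{-3/10}$ after optimizing the dyadic localization scale around $\xi_0$. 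To replace $\widehat{f}(t,\xi_0)$ by $f_\infty(\xi_0)$ in the modified phase, I substitute $\widehat{f} = e^{iB}\widehat{w}$ and observe that $\widehat{w}(t,\xi)$ is Cauchy in $L^\infty_\xi$: integrating \eqref{d_tf=}--\eqref{estR} in time and handling the oscillatory second term exactly as in \eqref{estextraR} shows that $\widehat{w}(t,\xi) \to f_\infty(\xi)$ with a quantitative rate, while $B(t,\xi_0) = \tfrac{1}{6}|f_\infty(\xi_0)|^2\log t + O(1)$ gives the stated modified phase in \eqref{prouasy3}.

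\medskip
\noindent
\textbf{Self-similar region and main obstacle.} In $|x| \leq t^{1/3+2\gamma}$ I introduce $\zeta = x/t^{1/3}$ and $U(t,\zeta) = t^{1/3}u(t,t^{1/3}\zeta)$, so that mKdV becomes
\[
t\,\partial_t U - \tfrac{1}{3}\bigl(U + \zeta\,\partial_\zeta U\bigr) + \partial_\zeta^3 U + \partial_\zeta(U^3) = 0.
\]
The formal $t \to \infty$ profile equation integrates once to Painlev\'e~II, and the constant of integration is forced to vanish by $\widehat{f}(t,0) = \widehat{u_0}(0)$ (itself a consequence of the factor $\xi$ in \eqref{toucan}), which also fixes $\int\varphi = \int u_0$. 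The plan is: (i) construct $\varphi$ as a bounded solution of Painlev\'e~II with this normalization via a contraction argument based on the Airy Green's function, using smallness of $\int u_0$ (this is the existence statement in the footnote of Theorem \ref{maintheo1}); (ii) rescale the Fourier representation as
\[
u(t,x) = \frac{t^{-1/3}}{\sqrt{2\pi}} \int_\R e^{i(\zeta\eta + \eta^3)}\, \widehat{f}(t,\, t^{-1/3}\eta)\, d\eta,
\]
and approximate $\widehat{f}(t, t^{-1/3}\eta)$ by $\widehat{\varphi}(\eta)$ up to an error of size $\e_0 t^{-3\gamma/2}$, exploiting the H\"older regularity of $\widehat{f}$ coming from $\|xf\|_{L^2}$ and $\||\partial_x|^\alpha xf\|_{L^2}$, together with the identity \eqref{d_tf=} at small frequencies where the oscillatory term becomes essentially non-oscillatory and reproduces the Painlev\'e~II nonlinearity. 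The main obstacle is step (ii): the replacement must be uniform in $|\zeta| \leq t^{2\gamma}$, and the resulting ansatz must be shown to match the stationary-phase formula \eqref{prouasy3} on the transition $|\zeta| \sim t^{2\gamma}$ (where $\varphi$ displays precisely the Airy oscillation quoted in the footnote). This requires a careful uniform treatment of the nonlinear correction rather than pointwise bounds on $\widehat{f}$ alone.
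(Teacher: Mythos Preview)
Your treatment of the decaying region $x \geq t^{1/3}$ and the oscillatory region $x \leq -t^{1/3+2\gamma}$ is essentially the paper's approach: a single integration by parts using $\|xf\|_{L^2} \lesssim \e_0 t^{1/6}$ and $\|\widehat{f}\|_{L^\infty} \lesssim \e_0$ handles \eqref{prouasy1} (the $|\partial_x|^\alpha$ control is not actually needed there), and the combination of a refined linear estimate (Lemma~\ref{lemlinearref}) with convergence of the modified profile $\widehat{w}$ (Lemma~\ref{lemasyprof}) gives \eqref{prouasy3}.

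For the self-similar region your route diverges from the paper's, and the obstacle you identify is real. You propose to construct the Painlev\'e profile $\varphi$ independently by a contraction argument and then show $\widehat{f}(t, t^{-1/3}\eta) \to \widehat{\varphi}(\eta)$. But since $\varphi$ is built from the Painlev\'e ODE rather than from $u$, identifying this limit requires showing that the rescaled solution $U(t,\zeta)$ actually converges and that its limit satisfies Painlev\'e~II with the correct normalization---which is circular unless you already have an independent Cauchy-in-time argument for $U$. Invoking \eqref{d_tf=} at frequencies $|\xi| \lesssim t^{-1/3}$ does not help: that expansion was derived only for $|\xi| > t^{-1/3}$ (note the indicator function in \eqref{d_tf=}), and at smaller frequencies the stationary-phase remainder bounds degenerate.

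The paper sidesteps this entirely. With $v(t,\zeta) = t^{1/3} u(t, t^{1/3}\zeta)$, the key observation is the identity
\[
\partial_t v(t,\zeta) = \tfrac{1}{3t^{2/3}}\, (Su)(t, t^{1/3}\zeta),
\]
linking $\partial_t v$ directly to the scaling vector field. Since $\|ISu\|_{L^2} \lesssim \e_0 t^{C\e_1^2}$ is already available from the weighted energy estimate \eqref{boundISu}, a frequency projection plus Bernstein gives $|\partial_t P_{\leq C t^\gamma} v| \lesssim \e_0 t^{-7/6+3\gamma/2+C\e_1^2}$, while $P_{\geq C t^\gamma} v$ is small by non-stationary phase. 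Thus $v(t,\cdot)$ is Cauchy in $L^\infty$ directly, and the limit $\varphi$ is \emph{produced} by the argument rather than pre-constructed; that it solves Painlev\'e~II is verified a posteriori from $\|\tfrac{1}{3}\zeta v - v_{\zeta\zeta} - v^3\|_{L^2} = t^{-1/6}\|ISu\|_{L^2} \to 0$. This is both simpler and avoids the matching problem you flag.
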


The proof of Proposition \ref{proasy} is given in the remaining of this section.

\medskip
\subsubsection*{Decaying region: Proof of \eqref{prouasy1}}
The proof of \eqref{prouasy1} follows from similar argument to those used in the proof of Lemma \ref{lemlinear2}.
As before we denote $\Lambda(\xi) = \xi^3$ and write
\begin{align*}
u(t,x) = e^{-t\partial_x^3} f(t,x) = \frac{1}{\sqrt{2\pi}} \int_{\R} e^{it \phi(\xi)} \what{f}(\xi) \,d\xi ,
\qquad \phi(\xi) := \xi (x/t) + \Lambda(\xi).
\end{align*}
Since for any $x>0$ we have $\partial_\xi \phi = x/t + 3 \xi^2 \geq \max(x/t,\xi^2)$, we integrate by parts in the above formula and bound:
\begin{align*}
& | u(t,x) | \lesssim I + II ,
\\
& I = \int_\R \left| \frac{1}{t \partial_\xi \phi(\xi)} \partial_\xi \what{f}(\xi) \right| \, d\xi ,
\\
& II = \int_\R \left| \frac{1}{t {[\partial_\xi \phi(\xi)]}^2} \partial_{\xi}^2 \phi(\xi) \what{f}(\xi) \right| \, d\xi .
\end{align*}
Using the weighted $L^2$ bound in \eqref{apriori}-\eqref{conc} we can estimate
\begin{align*}
| I | \lesssim \frac{1}{t} \Big[ \int_\R {(x/t + 3\xi^2)}^{-2} \, d\xi \Big]^{1/2} {\| xf \|}_{L^2}
  \lesssim \frac{1}{t} (x/t)^{-3/4} \e_0 t^{1/6} ,
\end{align*}
which is the desired bound. Similarly, we can use the bound on $\what{f}$ to obtain
\begin{align*}
| II | \lesssim \frac{1}{t} \int_\R {(x/t + 3\xi^2)}^{-2} |\xi| \, d\xi {\| \what{f} \|}_{L^\infty}
  \lesssim \frac{1}{t} (x/t)^{-1} \e_0 ,
\end{align*}
which is a stronger bound than what we need since $x \geq t^{1/3}$.

\medskip
\subsubsection*{Self-similar region: Proof of \eqref{prouasy2}}
We now look at the self-similar region $|x| \leq t^{1/3 + 2\gamma}$.
Define $v$ through the identity
\begin{align}
\label{vss}
u(t,x) = \frac{1}{t^{1/3}} v \big(t,\frac{x}{t^{1/3}} \big) , \quad  v(t,x) = t^{1/3} u(t,t^{1/3}x) .
\end{align}
Recall the definition of the scaling vectorfield $S = 1 + x\partial_x + 3t\partial_t$.
A simple computation shows that
\begin{align}
 \label{v_t1}
\partial_t v (t,x) = \frac{1}{3 t^{2/3}} \big( Su \big) (t, t^{1/3}x) .
\end{align}
Moreover, since $u$ is a solution of \eqref{mKdV}, one can verify that
\begin{align}
\label{v_t2}
\partial_t v(t,x) = \frac{1}{t} \partial_x \Big( \frac{1}{3} x v  - v_{xx} - v^3 \Big)(t,x) .
\end{align}

Our aim is to show that $v(t,x)$ is a Cauchy sequence in time with values in $L^{\infty}_x$.
For this we first show that, for all $|x| \leq  t^{2\gamma}$, one has
\begin{align}
\label{estv1}
& \big| P_{\geq 2^{20} t^{\gamma} } v(t,x) \big| \leq \e_0 t^{-3\gamma/2} ,
\\
\label{estv2}
& \big| \partial_t P_{\leq 2^{20} t^{\gamma}}  v(t,x) \big| \leq \e_0 t^{-7/6+3\gamma/2+C\e_1^2} .
\end{align}

For \eqref{estv1}, we recall that $f = e^{t\partial_x^3} u$, and write
\begin{align*}
P_{\geq 2^{20} t^{\gamma}} v(t,x) 
= t^{1/3} \int_{\R} e^{ i \phi(\xi;x,t) } \chi(\xi t^{1/3-\gamma} 2^{-20}) \what{f}(t,\xi) d\xi ,
  \qquad  \phi(\xi;x,t) := x \xi t^{1/3} + t \xi^3 .
\end{align*}
Since for any $|x| \leq  t^{2\gamma}$, we have $|\partial_\xi \phi | \gtrsim \xi^2 t \gtrsim t^{1/3+2\gamma}$ on the support of the above integral,
an integration by parts argument similar to those in the proof of Lemma \ref{lemlinear2}, 
shows the validity of \eqref{estv1}.
Notice that a similar bound also holds for $P_{\sim 2^{20} t^{\gamma}}  v(t,x)$.
Because of this, in order to obtain \eqref{estv2}, it suffices to prove the estimate for $P_{\leq 2^{20} t^{\gamma}} \partial_t  v(t,x)$.
Observe that from \eqref{v_t1} one has
$\partial_x^{-1} \partial_t v = 1/(3t) (ISu) (t,t^{1/3}x)$. Therefore, using Bernstein's inequality,
and the bound \eqref{boundISu}, we get
\begin{align*}
\big| P_{\leq 2^{20} t^{\gamma}} \partial_t v(t,x) \big| \lesssim t^{3\gamma/2} {\| \partial_x^{-1} \partial_t v \|}_{L^2}
  \lesssim t^{3\gamma/2 - 1} {\| (ISu) \|}_{L^2} t^{-1/6}
  \lesssim \e_0 t^{-7/6 + 3\gamma/2 + C\e_1^2} ,
\end{align*}
as desired.

We then write
\begin{align*}
v(t,v) = v(t,x) [1-\psi(x/t^{2\gamma})] + P_{\geq 2^{20} t^{\gamma}}v(t,x) \psi(x/t^{2\gamma})
  + P_{\leq 2^{20} t^{\gamma}}v(t,x) \psi(x/t^{2\gamma}) .
\end{align*}
Combining the decay estimate \eqref{lemlinear21} which gives $|v(t,x) [1-\psi(x/t^{2\gamma})]| \lesssim \e_0 t^{-\gamma/2}$,
with \eqref{estv1}-\eqref{estv2}, we see that there exists
$\varphi := \lim_{t \rightarrow \infty} v(t)$ with $|v(t) - \varphi| \lesssim \e_0 t^{-\gamma/2}$.
It also follows that, uniformly for $|x| \leq t^{2\gamma}$,
$$|v(t,x) - \varphi(x) | \lesssim \e_0 t^{-3\gamma/2} + \e_0 \int_t^{\infty} t^{-7/6 + 3\gamma/2 + C_1\e_1^2} \lesssim \e_0 t^{-3\gamma/2}$$
where we recall our choice of $\gamma = 1/3 ( 1/6 - C\e_1^2)$.

To verify that $\varphi$ satisfies the first identity in \eqref{painleve} it suffices to notice that from \eqref{v_t1} and \eqref{v_t2} one has
\begin{align*}
{\| x v  - 3 v_{xx} - 3v^3 \|}_{L^2} = {\| I  Su \|}_{L^2} t^{-1/6} \lesssim \e_0 t^{-1/6 + c\e_1^2} .
\end{align*}
To prove the second identity in \eqref{painleve} we let $0 < a < \gamma/2$ and use $|v(t) - \varphi| \lesssim t^{-\gamma/2}$ to write
\begin{align*}
\begin{split}
\int \varphi(x) \psi(x/t^a) \, dx 
  =  \int v(t,x) \psi(x/t^a) \, dx + O(t^{a-\gamma/2}) .
\end{split}
\end{align*}
Using Plancherel, and the moment conservation for $u$, we have
\begin{align*}
\int v(t,x) \psi(x/t^a) \, dx = \int ( \what{u}(t,\xi/t^{1/3}) - \what{u}(t,0) ) \what{\psi}(\xi t^a) t^a \, dx + \int u_0(x) \, dx .
\end{align*}
Using the bounds \eqref{apriori}-\eqref{conc} we see that for all $|\xi| \leq 1$
\begin{align*}
| \what{u}(t,\xi/t^{1/3}) - \what{u}(t,0) | \leq | \what{f}(t,\xi/t^{1/3}) (e^{i\xi^3} - 1) |
  + | \what{f}(t,\xi/t^{1/3}) - \what{f}(t,0) | \lesssim |\xi|^{1/2}.
\end{align*}
This shows that
\begin{align*}
\Big| \int \varphi(x) \psi(x/t^a) \, dx - \int u_0(x) \, dx \Big| \lesssim  t^{a-\gamma/2} + t^{-a/2} ,
\end{align*}
which implies \eqref{painleve}.

\medskip
\subsubsection*{Modified scattering: Proof of \eqref{prouasy3}}
The next Lemma gives a refined version of the linear estimate \eqref{lemlinear21}.

\begin{lemma}[Refined linear estimate]\label{lemlinearref}
Let $u = e^{-t\partial_x^3} f$, for $f \in L^2$ satisfying
\begin{align}
 \label{linearrefhyp}
\sup_{t \geq 2} \big( {t}^{-1/6} {\| \langle x \rangle f(t) \|}_{L^2} + {\| \what{f}(t) \|}_{L^\infty_\xi} \big) \leq 1 .
\end{align}
Then, for all $t \geq 2$ and $x \leq -t^{1/3}$,
\begin{align}
\label{linearrefconc}
\Big| u(t,x) - \frac{1}{\sqrt{3t \xi_0}} \,\Re \big( e^{- 2i t\xi_0^3 + i\frac{\pi}{4}} \what{f}(\xi_0) \big) \Big|
  \lesssim \frac{1}{t^{1/3} |x/t^{1/3}|^{3/10} } ,
\end{align}
where $\xi_0 := \sqrt{-x/(3t)}$, and $\Re$ denotes the real part.
\end{lemma}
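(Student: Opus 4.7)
The lemma is a stationary phase statement for the oscillatory integral
$$u(t,x) = \frac{1}{\sqrt{2\pi}} \int_\R e^{it\phi(\xi)} \what{f}(\xi)\,d\xi, \qquad \phi(\xi) = \xi^3 + (x/t)\xi,$$
carried out with only the borderline regularity afforded by \eqref{linearrefhyp}. For $x \le -t^{1/3}$, the phase has two non-degenerate critical points at $\pm \xi_0 = \pm \sqrt{-x/(3t)}$ with $\xi_0 \gtrsim t^{-1/3}$, $\phi(\pm \xi_0) = \mp 2 \xi_0^3$, and $\phi''(\pm \xi_0) = \pm 6 \xi_0$. The plan is to introduce a smooth partition of unity isolating the neighborhoods $\{|\xi \mp \xi_0| < \delta\}$ of each critical point from the rest, with a scale $\delta$ to be optimized at the end. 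Each stationary piece will produce a main term, while its complement will be handled by integration by parts exactly as in the proof of Lemma~\ref{lemlinear2}.

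On the non-stationary region $\{|\xi|\sim \xi_0,\,|\xi \mp \xi_0|\gtrsim \delta\}$ together with the far region $|\xi|\gg\xi_0$, one has $|\partial_\xi \phi| = 3|\xi^2-\xi_0^2|\gtrsim \xi_0 |\xi\mp\xi_0|$; integrating by parts once and using both bounds of \eqref{linearrefhyp} yields a contribution bounded by $(t\delta\xi_0)^{-1} + t^{-5/6}\xi_0^{-1}\delta^{-1/2}$. For the pieces near $\pm\xi_0$ I would freeze $\what f(\xi)$ at $\what f(\pm \xi_0)$, expand $\phi(\xi) = \phi(\pm\xi_0) \pm 3\xi_0(\xi\mp\xi_0)^2 + O(|\xi\mp\xi_0|^3)$, and recognize the leading order as a Fresnel integral evaluating to $\sqrt{\pi/(3t\xi_0)}\,e^{\pm i \pi/4}$ up to a tail error of order $(t\xi_0\delta^2)^{-1/2}$. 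Since $f$ is real, $\what f(-\xi_0) = \overline{\what f(\xi_0)}$ and the two stationary contributions combine to give the stated expression involving the real part.

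Three error families then need to be bounded: (a) the freezing error $\int_{|\xi\mp\xi_0|<\delta}e^{it\phi}(\what f(\xi) - \what f(\pm\xi_0))\,d\xi$, which by the H\"older embedding $|\what f(\xi) - \what f(\xi_0)| \lesssim \|\partial_\xi \what f\|_{L^2}|\xi-\xi_0|^{1/2} \lesssim t^{1/6}\delta^{1/2}$ is at most $t^{1/6}\delta^{3/2}$; (b) the error from replacing the full phase by its quadratic part on $|\xi\mp\xi_0|<\delta$, of size $t\delta^4$; and (c) the Fresnel tail and IBP bounds already listed. Balancing the two genuinely competing terms $t^{1/6}\delta^{3/2}$ and $(t\xi_0\delta)^{-1}$ forces the choice $\delta \sim t^{-7/15}\xi_0^{-2/5}$, for which every error above is controlled by $t^{-8/15}\xi_0^{-3/5}$. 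Since $|x/t^{1/3}| = 3\xi_0^2 t^{2/3}$, this is exactly the announced $t^{-1/3}|x/t^{1/3}|^{-3/10}$.

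The main obstacle is that the available regularity of $\what f$ is essentially minimal: one has only $\what f \in C^{1/2}$ via Sobolev embedding from the weighted $L^2$ bound, and the specific exponent $-3/10$ is precisely what emerges from optimally trading this weak H\"older continuity against the gain from a single integration by parts. A secondary point is to check that the optimal $\delta$ satisfies $\delta \ll \xi_0$, so that the Taylor expansion of $\phi$ around $\pm\xi_0$ is legitimate and the error from the cubic term ($t\delta^4$) remains subordinate; this amounts exactly to $\xi_0 \gtrsim t^{-1/3}$, which is built into the assumption $x\le -t^{1/3}$.
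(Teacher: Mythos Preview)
Your approach is correct and essentially identical to the paper's: both localize near $\pm\xi_0$ at scale $\delta \sim t^{-7/15}\xi_0^{-2/5}$ (written there as $2^{\ell_0}\approx t^{-1/3}(-x/t^{1/3})^{-1/5}$), integrate by parts once on the complement, and on the stationary piece freeze $\what f$, Taylor expand $\phi$ to second order, and evaluate the resulting Fresnel integral, with the same balance $t^{1/6}\delta^{3/2}\sim (t\xi_0\delta)^{-1}$ producing the exponent $-3/10$. One small slip: the Fresnel tail error is $(t\xi_0\delta)^{-1}$ rather than $(t\xi_0\delta^2)^{-1/2}$, but at your chosen $\delta$ the former equals $t^{-8/15}\xi_0^{-3/5}$ and coincides with your integration-by-parts bound, so the outcome is unaffected.
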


This result can be proven by similar arguments to those in the proof of Lemma \ref{lemlinear2}, and those of Lemma 3.2 in \cite{IoPunote}.
For completeness we give the main ideas of proof below.

\begin{proof}[Proof of Lemma \ref{lemlinearref}]
We write
\begin{align}
\label{ufhat}
 u(t,x) = \sqrt{\frac{2}{\pi}} \Re \int_0^\infty e^{i t \phi(\xi)} \what{f}(\xi) \, d\xi ,
  \qquad \phi(\xi) := \frac{x}{t} \xi + \xi^3 .
\end{align}
As before we let $\xi_0 := \sqrt{-x/3t} \approx t^{-1/3}(-x/t^{1/3})^{1/2}\gtrsim t^{-1/3}$ be the only stationary point of the phase $\phi$ in \eqref{ufhat}.

We first look at the frequency region with $|\xi - \xi_0| \geq \xi_0/2$. There we have $|\partial_\xi \phi(\xi)| \gtrsim \max(\xi^2,\xi_0^2)$.
Then, an integration by parts like the one in the proof of Lemma \ref{lemlinear2} (cfr. the terms $C_1$ and $C_2$ there) gives us a bound
of the form $t^{-1} \xi_0^{-2} + t^{-5/6} \xi_0^{-3/2} \lesssim t^{-1/3} (-x/t^{1/3})^{-3/4}$,
which is smaller than the right-hand side of \eqref{linearrefconc}.

We then analyze the case with $|\xi - \xi_0| \leq \xi_0/2$.
If $|\xi-\xi_0| \approx 2^\ell$, for $\ell \geq \ell_0$ with
$$2^{\ell_0} \approx t^{-1/3} (-x/t^{1/3})^{-1/5},$$
we integrate by parts in frequency.
Using $|\partial_\xi \phi(\xi)| \gtrsim 2^\ell \xi_0$, we bound these contributions by
\begin{align*}
t^{-1} \Big( {\| \partial_\xi \what{f} \|}_{L^2} \xi_0^{-1} 2^{-\ell/2}
  + {\|\what{f} \|}_{L^\infty} \xi_0^{-1} 2^{-\ell} \Big) .
\end{align*}
Using \eqref{linearrefhyp}, and the definitions of $\xi_0$ and $\ell_0$,
we see that the contribution from the region $|\xi-\xi_0| \geq 2^{\ell_0}$
is of the order of $t^{-1/3} (-x/t^{1/3})^{-3/10}$, which is an acceptable remainder.

We are then left with estimating the contribution to the integral \eqref{ufhat} coming from the region $|\xi - \xi_0| \leq 2^{\ell_0}$.
We write this contribution as
\begin{align*}
& \sqrt{\frac{2}{\pi}} \Re \int_0^\infty e^{i t \phi(\xi)} \chi( (\xi-\xi_0) 2^{-\ell_0})\what{f}(\xi) \, d\xi
  = A + B + C
\\
& A = \sqrt{\frac{2}{\pi}} \Re \big( e^{i t \phi(\xi_0)} \what{f}(\xi_0) \int_0^\infty e^{it3\xi_0 \xi^2/2} \chi(\xi/2^{\ell_0})\, d\xi \Big)
\\
& B = \sqrt{\frac{2}{\pi}} \Re \int_0^\infty
  \Big( e^{i t \phi(\xi)} - e^{i t \phi(\xi_0) + it \phi^{\prime\prime}(\xi_0) (\xi-\xi_0)^2/2 } \Big) \chi( (\xi-\xi_0) 2^{-\ell_0})\what{f}(\xi) \, d\xi
\\
& C = \sqrt{\frac{2}{\pi}} \Re e^{i t \phi(\xi_0)} \int_0^\infty
  e^{it \phi^{\prime\prime}(\xi_0) (\xi-\xi_0)^2/2 } \chi( (\xi-\xi_0) 2^{-\ell_0}) \big( \what{f}(\xi) - \what{f}(\xi_0) \big)\, d\xi .
\end{align*}

Using the hypotheses we immediately see that
\begin{align*}
& |B| \lesssim t 2^{4 \ell_0} \lesssim t^{-1/3} (-x/t^{1/3})^{-4/5} ,
\\
& |C| \lesssim t^{1/6} 2^{3 \ell_0/2} \lesssim t^{-1/3} (-x/t^{1/3})^{-3/10} ,
\end{align*}
so that these terms are acceptable remainders.

Using the formula
\begin{align*}
\int_\R e^{-ax^2} \, dx = \sqrt{\frac{\pi}{a}} , \qquad a \in \C , \quad \Re a > 0 ,
\end{align*}
we see that
\begin{align*}
\int_0^\infty e^{it3\xi_0 \xi^2/2} e^{-\xi^2/2^{\ell_0}} \, d\xi = \frac{1}{2} \sqrt{\frac{2\pi}{-i3t\xi_0}}
  + O \big( 2^{2\ell_0} + 2^{-\ell_0} (t\xi_0)^{-3/2} \big) .
\end{align*}

Finally, it follows that
\begin{align*}
A = \Re \sqrt{\frac{i}{3t\xi_0}} e^{it\phi(\xi_0)} \what{f}(\xi_0) + O \big( t^{-2/3} |x/t^{1/3}|^{-3/10} \big) ,
\end{align*}
and this completes the proof of the Lemma.
\end{proof}

Notice that in the region $x \leq -t^{1/3+2\gamma}$ one has $\xi_0 = \sqrt{x/(-3t)} \gtrsim t^{-1/3+\gamma} \gg t^{-1/3}$.
Our next goal is then to identify an asymptotic profile for $\what{f}(\xi)$, where $f = e^{t\partial_x^3} u$ and $u$ solves \eqref{mKdV},
whenever $|\xi| \gg t^{-1/3+\gamma}$.
This will then determine the leading order asymptotic term for $u$ in this region via \eqref{linearrefconc}.

\begin{lemma}\label{lemasyprof}
Let $f = e^{t\partial_x^3} u$ with $u$ satisfying the bounds \eqref{apriori}-\eqref{conc},
and let us define the modified profile as in \eqref{modprof}:
\begin{align}
\label{asyprof0}
\what{w}(t,\xi) := e^{- i B(t,\xi)} \what{f}(t,\xi)  , \, \quad
  B(t,\xi) := \frac{1}{6} \operatorname{sign} \xi \int_1^t \big|\what{f}(s,\xi) \big|^2 \frac{ds}{s} \, .
\end{align}
Then there exists $w_\infty \in L^\infty$ such that, for all $t \geq 2$, and $|\xi| \geq t^{-1/3 + \gamma}$
\begin{align}
\label{asyprof1}
| \what{w}(t,\xi) - w_\infty(\xi) | \lesssim \e_0 (|\xi| t^{1/3})^{- \kappa} ,  
\end{align}
for any $\kappa \in (0,1/4)$.
Moreover, there exists $f_\infty \in L^\infty$ such that, for $|\xi| \geq t^{-1/3 + \gamma}$, 
we have
\begin{align}
\label{asyprof2}
  \Big| \what{f}(t,\xi) - \exp \Big(\frac{i}{6}
  \operatorname{sign} \xi |f_\infty (\xi) |^2 \log t \Big) f_\infty (\xi) \Big| \lesssim \e_0 (|\xi| t^{1/3})^{-\kappa} . 
\end{align}
\end{lemma}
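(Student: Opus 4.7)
By construction, the modified profile $\what{w}$ cancels the diagonal stationary contribution at $(\eta,\sigma)=(\xi,\xi)$ in \eqref{d_tf=}. Explicitly, from \eqref{d_tf=} and \eqref{asyprof0},
\[
\partial_t \what{w}(t,\xi) = e^{-iB(t,\xi)}\!\left\{\frac{c}{t}e^{it\frac{8}{9}\xi^3}\bigl[\mathbf{1}_{|\xi|>t^{-1/3}}\what{f}(t,\xi/3)\bigr]^3 + R(t,\xi)\right\}.
\]
The plan is to prove the quantitative Cauchy bound $|\what{w}(t_2,\xi)-\what{w}(t_1,\xi)|\lesssim \e_0(|\xi|t_1^{1/3})^{-\kappa}$ for $2\leq t_1\leq t_2$ and $|\xi|\geq t_1^{-1/3+\gamma}$. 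Letting $t_2\to\infty$ then defines $w_\infty$ and yields \eqref{asyprof1} (after relabeling $t_1\mapsto t$).

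\textbf{Proof of the Cauchy bound.}
Two ingredients suffice. First, revisiting the analysis of $I, II, III, IV$ in section \ref{keysec} with $\rho$ close to $1/4$ and $\alpha$ close to $1/2$, one extracts a pointwise estimate
\[
|R(t,\xi)|\lesssim \e_1^3\,t^{-1}(|\xi|t^{1/3})^{-\kappa}, \qquad |\xi|\geq t^{-1/3},
\]
for some fixed $\kappa\in(0,1/4)$; integrating in $s\in[t_1,t_2]$ gives a contribution $\lesssim \e_1^3(|\xi|t_1^{1/3})^{-\kappa}$. Second, the long-range oscillatory term is treated by a single integration by parts in $s$ against $e^{is\frac{8}{9}\xi^3}$, exactly as in the proof of \eqref{estextraR}. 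The boundary piece is $\lesssim \e_1^3(|\xi|^3 t_1)^{-1}\leq \e_1^3(|\xi|t_1^{1/3})^{-\kappa}$, since $|\xi|t_1^{1/3}\geq t_1^\gamma\geq 1$ and $\kappa<3$. The interior pieces, involving $\partial_s \what{f}(s,\xi/3)$, $\partial_s B(s,\xi)=O(\e_1^2/s)$, and $\partial_s(1/s)$, are each bounded by invoking \eqref{d_tf=}, the elementary bound on $\partial_s B$, and the pointwise $R$-bound just derived.

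\textbf{Identification of $f_\infty$, and main obstacle.}
Since $|\what{w}|=|\what{f}|$, the bound \eqref{asyprof1} gives $\bigl||\what{w}(s,\xi)|^2-|w_\infty(\xi)|^2\bigr|\lesssim \e_0\e_1(|\xi|s^{1/3})^{-\kappa}$ for $|\xi|\geq s^{-1/3+\gamma}$. Splitting $B(t,\xi)$ at $T_\xi:=|\xi|^{-(1/3-\gamma)^{-1}}$ (no split is needed when $|\xi|\geq 1$, since then $T_\xi\leq 1$), one obtains
\[
B(t,\xi)=\tfrac{1}{6}\operatorname{sign}\xi\,|w_\infty(\xi)|^2\log t+\theta(\xi)+O\!\bigl(\e_0(|\xi|t^{1/3})^{-\kappa}\bigr),
\]
where $\theta(\xi)$ collects the contribution from $[1,T_\xi]$ together with the absolutely convergent error integral on $[T_\xi,\infty)$. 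Setting $f_\infty(\xi):=e^{i\theta(\xi)}w_\infty(\xi)$, so that $|f_\infty|=|w_\infty|$, and substituting into $\what{f}=e^{iB}\what{w}$ gives \eqref{asyprof2}. The main obstacle in the proof is the pointwise remainder bound: the estimates \eqref{bound1}--\eqref{bound32} were arranged to yield the time-integrated statement \eqref{estR}, so one must re-examine each of $I, II, III, IV$ to certify a surplus factor $(2^j t^{1/3})^{-\kappa}$ uniformly in the dyadic localization $|\xi|\sim 2^j$; the remaining time-integration-by-parts and analysis of $B$ are more routine.
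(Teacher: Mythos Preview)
Your proposal is correct and follows the paper's approach: the Cauchy bound for $\what w$ via the pointwise remainder estimates (the paper's \eqref{asyprof12}--\eqref{asyprof21}) and the integration by parts in $s$ for the oscillatory $\xi/3$ term are exactly the paper's steps. The only cosmetic difference is in extracting the asymptotic phase: the paper introduces $A(t,\xi):=B(t,\xi)-\tfrac16\operatorname{sign}\xi\,|\what f(t,\xi)|^2\log t$ and shows it is Cauchy, whereas your direct split of $B$ at $T_\xi$ achieves the same end (and even avoids an intermediate $\log t$ loss that the paper absorbs by shrinking $\kappa$).
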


\begin{proof}
To prove \eqref{asyprof1} it suffices to show that for all times $t_2 \geq t_1 \geq 2$, one has
\begin{align}
\label{asyprof11}
| \what{w}(t_1,\xi) - \what{w}(t_2,\xi)| \leq \e_1^3 { \big( 2^j t_1^{1/3} \big) }^{-\kappa} .
\end{align}
for every $|\xi| \approx 2^j$, with  $j\in\mathbb{Z}$ and $2^j \geq t_1^{-1/3 + \gamma}$.
The starting point to prove \eqref{asyprof11} is the formula \eqref{d_tf=0} which, for $|\xi| \geq t^{-1/3 + \gamma} \gg t^{-1/3}$, reads
\begin{align*}
\partial_t \what{f}(t,\xi) =  I + II + III ,
\end{align*}
where all the terms on the right-hand side are defined in \eqref{d_tf=0}.
From \eqref{bound1} and the definition of the modified profile $\what{w}$ in \eqref{asyprof0}, we see that,
for $t_1 \leq t \leq t_2$,
\begin{align}
\label{asyprof12}
\begin{split}
\Big| \partial_t \what{w}(t,\xi)
  - e^{- i B(t,\xi)} \frac{ic}{t} e^{it \frac{8}{9} \xi^3} \widehat{f} (\xi/3)^3 \Big|  
  \lesssim  2^{-j\kappa} t^{-1-\kappa/3} \e_1^3 + |II(t,\xi)| + |III(t,\xi)|, 
\end{split}
\end{align}
where we recall that we have previously defined $\kappa = -\frac{3}{4} + 2\rho + \alpha$,
and we can choose $0<\alpha<\frac{1}{2}$ and $0<\rho<\frac{1}{4}$ so that $\kappa = 1/4 - \beta$, for any small $\beta > 0$.
To prove \eqref{asyprof1} it will then suffice to show
\begin{align}
\label{asyprof20}
\Big| \int_{t_1}^{t_2} e^{- i B(t,\xi)} \frac{ic}{t} e^{-it \frac{8}{9} \xi^3} \what{f}(\xi/3)^3 \, dt \Big|
  \lesssim \e_1^3 { \big( 2^j t_1^{1/3} \big) }^{-\kappa} ,
\end{align}
for all $|\xi| \geq t_1^{-1/3 + \gamma}$, and
\begin{align}
 \label{asyprof21}
|II(t,\xi)| + |III(t,\xi)| \lesssim \e_1^3 t^{-1-\kappa/3} 2^{-\kappa j} ,
\end{align}
for $t_1 \leq t \leq t_2$, and $|\xi| \geq t^{-1/3 + \gamma}$.
Here we have used the fact that the first term on the right-hand side of \eqref{asyprof12} matches the right-hand side of \eqref{asyprof21},
which, upon integration between $t_1$ and $t_2$, gives the desired bound.

To prove \eqref{asyprof20} we use an integration by parts argument similar to the one that gave us \eqref{estextraR}.
Proceeding as in \eqref{estextraRIBP}, we see that
\begin{align*}
\begin{split}
 \Big| & \int_{t_1}^{t_2} e^{it \frac{8}{9} \xi^3} e^{-iB(t,\xi)} \widehat{f} (t,\xi/3)^3 \frac{dt}{t} \Big|
  \lesssim J^\prime + K^\prime + L^\prime + M^\prime
\\
& J^\prime = \frac{1}{|\xi|^3} |\widehat{f} (t,\xi/3) |^3 \frac{1}{t} \,\, \Big|^{t_2}_{t=t_1} \, ,
\\
& K^\prime = \int_{t_1}^{t_2} \frac{1}{|\xi|^3} |\partial_t \widehat{f} (t,\xi/3) |  |\widehat{f} (t,\xi/3)|^2 \frac{dt}{t} \, ,
\\
& L^\prime = \int_{t_1}^{t_2} \frac{1}{|\xi|^3} | \partial_t B(t,\xi)|  |\widehat{f}(t,\xi/3)|^3 \frac{dt}{t} \, ,
\\
& M^\prime = \int_{t_1}^{t_2} \frac{1}{|\xi|^3} |\widehat{f}(t,\xi/3)|^3 \frac{dt}{t^2} \, .
\end{split}
\end{align*}
Using ${\|\what{f}(t)\|}_{L^\infty} \leq \e_1$ we immediately see that $J^\prime \lesssim \e_1^3 2^{-3j} t_1^{-1}$,
which is more than sufficient, since $2^j t_1^{1/3} \gg 1$.
Using \eqref{d_tf=} and \eqref{estR} we see that
\begin{align*}
K^\prime \lesssim \int_{t_1}^{t_2} \frac{1}{|\xi|^3} \Big[ \frac{\e_1^3}{t} +  R(t,\xi) \Big] \e_1^2 \frac{dt}{t}
  \lesssim \e_1^2 2^{-3j} t_1^{-1} \Big[ \e_1^3 + \int_{t_1}^{t_2} R(t,\xi) \,dt  \Big]
  \lesssim \e_1^5 2^{-3j} t_1^{-1} \, .
\end{align*}
$L^\prime$ and $M^\prime$ can be bounded similarly, using also $|\partial_t B(t,\xi)| \leq \e_1^2 t^{-1}$.

We now prove \eqref{asyprof21}.
To bound $II$ we look back at the estimate \eqref{bound2}, recall that $\kappa = -\frac{3}{4} + 2\rho + \alpha$,
and see that
\begin{align*}
|II(t,\xi)| \lesssim \e_1^3 2^j \sum_{2^k \gg 2^j} 2^{(-1-\kappa) k} t^{-1 - \kappa/3} \lesssim \e_1^3 2^{-\kappa j} t^{-1 - \kappa/3} .
\end{align*}
To estimate $III$ we recall \eqref{bound31} and \eqref{bound32}, and, in the case $2^j \geq t^{-1/3+\gamma}$, deduce the following:
\begin{align*}
|III(t,\xi)| \lesssim \e_1^3  2^j \sum_{2^k \gg 2^\ell \gtrsim 2^j}
  \big( t^{-7/6} 2^{-3k/2} + t^{-4/3} 2^{-2k} \big)
  \\
  \lesssim \e_1^3 ( 2^{-j/2} t^{-7/6} + 2^{-j} t^{-4/3} )
  \lesssim \e_1^3 t^{-1} (t^{1/3} 2^{j} )^{-1/2}.
\end{align*}
%
This completes the proof of \eqref{asyprof21} and gives us \eqref{asyprof11}.
We also deduce that $\what{w}(t)$ is a Cauchy sequence and obtain the existence of a limit profile $w_\infty$ as in \eqref{asyprof1}.

To prove \eqref{asyprof2} we begin by observing that \eqref{asyprof1} implies that for $t\geq 2$
\begin{align}
\label{asyprof30}
\big| |\what{f}(t,\xi)| - |w_\infty(\xi)| \big| \lesssim \e_1^3 (|\xi| t^{1/3})^{- \kappa} .
\end{align}
Next, for $B$ as in \eqref{asyprof0}, we define
\begin{align}
\label{asyprof31}
A(t,\xi) := B(t,\xi) - \frac{1}{6} \operatorname{sign}\xi \, {|\what{f}(t,\xi)|}^2 \log t .
\end{align}
Omitting the variable $\xi$, we calculate for $2\leq t_1 \leq t_2$
\begin{align*}
A(t_2) - A(t_1) = \frac{1}{6} \operatorname{sign}\xi
  \int_{t_1}^{t_2} \big( {|\what{f}(s)|}^2 - {|\what{f}(t_2)|}^2 \big) \frac{ds}{s}
  + \frac{1}{6} \operatorname{sign}\xi \big( {|\what{f}(t_1)|}^2 - {|\what{f}(t_2)|}^2 \big) \log t_1 .
\end{align*}
From this and \eqref{asyprof30} we deduce that $A(t,\xi)$ is a Cauchy sequence in time,
and there exists $A_\infty \in L^\infty_\xi$ such that
\begin{align*}
|A(t,\xi) - A_\infty(\xi)| \lesssim \e_1^3 (|\xi| t^{1/3})^{- \kappa} \log t.
\end{align*}
Thanks to \eqref{asyprof30} and \eqref{asyprof31} we see that
\begin{align*}
\big |B(t,\xi) - \big( A_\infty(\xi) +
  \frac{1}{6} \operatorname{sign}\xi \, {|w_\infty(\xi)|}^2 \log t \big) \big| \lesssim \e_1^3 (|\xi| t^{1/3})^{- \kappa} \log t ,
\end{align*}
and, in view of \eqref{asyprof0} and \eqref{asyprof1}, we obtain
\begin{align*}
\big | \what{f}(t,\xi) - w_\infty(\xi) \exp \big( i A_\infty(\xi) +
  \frac{i}{6} \operatorname{sign}\xi \, {|w_\infty(\xi)|}^2 \log t \big) \big| \lesssim \e_1^3 (|\xi| t^{1/3})^{- \kappa} \log t .
\end{align*}
The desired conclusion \eqref{asyprof2} follows by defining $f_\infty(\xi) := w_\infty(\xi) \exp(i A_\infty(\xi))$.
\end{proof}

\medskip
Finally, we observe that in the space-time region $x/t^{1/3} \leq - t^{2\gamma}$ we have
$\xi_0 = \sqrt{-x/3t} \approx (-x/t^{1/3})^{1/2} t^{-1/3} \geq t^{-1/3 + \gamma}$,
and we can then combine the refined linear estimate \eqref{linearrefconc} in Lemma \ref{lemlinearref},
and the modified asymptotic estimate \eqref{asyprof2} in Lemma \ref{lemasyprof} to obtain:
\begin{align*}
\Big| u(t,x) - \frac{1}{\sqrt{3t\xi_0}} \Re \Big\{ \exp \Big( -2i t \xi_0^3 + i\frac{\pi}{4} +
  \frac{i}{6} |f_\infty(\xi_0)|^2 \log t \Big) f_\infty (\xi_0)  \Big\} \Big|
  \\
\lesssim \e_0 {(t \xi_0)}^{-1/2} (t^{1/3}\xi_0)^{- \kappa}  \log t + \e_0 t^{-1/3} |x/t^{1/3}|^{-3/10}
\end{align*}
for $f_\infty \in L^\infty$, and whenever $x/t^{1/3} \leq - t^{2\gamma}$.
Since $\kappa$ can be chosen arbitrarily close to $1/4$, this gives \eqref{prouasy3} and concludes the proof of Proposition \ref{proasy}.
$\hfill \Box$

\bigskip

\section{Stability of solitons}\label{secsoliton}

In this section, shall study the
the asymptotic stability of the solitons
\begin{align*}
 Q_{c}(x-ct) = \sqrt{c} Q (\sqrt{c} ( x - ct) ) , \qquad Q(s) := \sqrt{2}/\cosh (s) , \quad c>0,
\end{align*}
for the focusing mKdV equation
\beq
\label{mKdVbis}
\partial_{t} u +\partial_{x}^3 u  +  \partial_{x} u^3 = 0.
\eeq
 The aim is to prove Theorem \ref{maintheo2}.
%
%
%
%
%
%
%
This will be obtained by combining the modified scattering result of the previous section and an asymptotic
stability result in a  weighted space for the soliton (Theorem \ref{theosoliton2}).

 For a smooth non-negative weight $w$, we shall use the following notation for the  weighted norms:
 $$
\|u\|_{L^2_{w}} = \| w \, u \|_{L^2}, \quad \|u\|_{H^s_{w}}^2 = \sum_{k \leq s}  \| w \, \partial_x^k u \|_{L^2}^2.
$$
 In the following, we shall use as weights,  $w(x)= (1 + \tanh (\delta x))^{1/2}$, with $\delta$ sufficiently small, 
and $w'$.
We shall first prove:

\begin{theorem}
\label{theosoliton2}
For every $\eps_{1} > 0$ there exists $\eps_0$ such that the following holds true:
if  
$v_0$ satisfies
\beq
\label{hypv0sol}
\|v_{0} \|_{H^1} + \| \langle x_{+} \rangle^m  v_{0}\|_{H^1} \leq \eps_{0}
\eeq
for some fixed $m>1/2$, then there exists a shift $h(t)$ and a modulation speed $c(t)$
such that the solution of \eqref{mKdVbis} with $u(t=0) = Q_{c_{0}} + v_{0}$ satisfies
\beq
\label{usplit} u(t,x)  =   Q_{c(t)}( y) + v(t,y), \quad y= y(x,t)= x - \int_{0}^t c(s) \, ds + h(t) ,
\eeq
with
\beq
\label{estsoliton}
\|\langle y_{+} \rangle^m v(t) \|_{H^1} +  \langle t \rangle^m  \|v(t) \|_{H^1_{w}} +  \langle t \rangle^{2 m } ( |c'(t)|  +  | h'(t)|)
  \lesssim \eps_{1}, \quad \forall t \geq 0 .\eeq
Moreover, one has the bound
\beq
\label{estsoliton2}
  \int_{0}^{\infty} \|v \|_{H^2_{w'}}^2 \, dt \lesssim \eps_{1}^2 .
\eeq
\end{theorem}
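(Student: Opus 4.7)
The plan is a continuity/bootstrap argument: assume \eqref{estsoliton}--\eqref{estsoliton2} hold on $[0,T]$ with a generous constant and recover them with a smaller one, so that choosing $\eps_0 \ll \eps_1 \ll 1$ lets the bounds propagate for all $t\geq 0$. The first step is modulation, following Pego-Weinstein and Martel-Merle: for each $t$ I would choose $c(t)$ and $h(t)$ so that $v(t,y) = u(t,x) - Q_{c(t)}(y)$ is orthogonal to the two-dimensional generalized kernel of $\mathcal{L}_c^*$, essentially spanned by $Q_c$ and $\partial_c Q_c$. The implicit function theorem at $v = 0$ parametrizes this uniquely near the soliton orbit. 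Differentiating the orthogonality conditions in $t$ and inserting the equation for $v$ produces a $2\times 2$ linear system for $(c',h')$ whose inhomogeneity is quadratic or higher in $v$ and localized by $Q_c$, yielding
\begin{align*}
|c'(t)| + |h'(t)| \lesssim \|v(t)\|_{L^2_w}^2 + \|v(t)\|_{L^2_w}^3,
\end{align*}
so that the third quantity in \eqref{estsoliton} will follow automatically once the second is known.

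Following Mizumachi-Tzvetkov, I then decompose $v = v_1 + v_2$, where $v_1$ solves the translated mKdV equation from data $v_0$ (hence is a small global solution of mKdV in its own right), and $v_2$ starts at zero and satisfies $\partial_t v_2 - \mathcal{L}_c v_2 = F$ with
\begin{align*}
F = -\partial_y \bigl( (Q_c + v_1 + v_2)^3 - Q_c^3 - v_1^3 - 3 Q_c^2 v_2 \bigr) + \{\text{mod.\ terms}\}.
\end{align*}
The key structural point is that every term in $F$ contains at least one factor of $Q_c$ or $v_2$, so that $F$ is automatically localized in the exponentially weighted space $L^2_w$ as soon as $v_2$ is. For $v_1$, Theorem \ref{maintheo1} provides the pointwise decay $\|v_1(t)\|_{L^\infty} \lesssim \eps_0 t^{-1/3}$; I propagate the weighted bound $\|\langle y_+\rangle^m v_1\|_{H^1}$ by a virial identity, multiplying the $v_1$ equation by $v_1 \langle y_+\rangle^{2m}$ and $\partial_y v_1 \langle y_+\rangle^{2m}$ respectively. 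The dispersive term produces the favourable quantity $\int (\partial_y v_1)^2 (\langle y_+\rangle^{2m})' \,dy \geq 0$, the transport $-(c+\dot h) \partial_y v_1$ gives an error absorbed by the modulation bound on $c'$, and the cubic nonlinearity is bounded by $\|v_1\|_{L^\infty}^2$ times the virial itself, so Gronwall gives a uniform-in-time bound.

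For $v_2$ I would use the Pego-Weinstein weighted linear estimates: on the orthogonal complement of the generalized kernel of $\mathcal{L}_c$, the semigroup $e^{t\mathcal{L}_c}$ decays exponentially in $L^2_w$. By Duhamel,
\begin{align*}
\|v_2(t)\|_{L^2_w} \lesssim \int_0^t e^{-\beta(t-s)} \|F(s)\|_{L^2_w}\,ds,
\end{align*}
and the structural decomposition of $F$ bounds $\|F(s)\|_{L^2_w}$ by combinations of $\|v_1\|_{L^\infty}^k \|Q_c\|_{L^2_w}$ and $\|v_1\|_{L^\infty}^k \|v_2\|_{L^2_w}$, all integrable in $s$; convolution then delivers the rate $\langle t\rangle^{-m}$. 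The Kato-type smoothing bound \eqref{estsoliton2} arises simultaneously from a multiplier identity of the schematic form $\tfrac{d}{dt} \int v^2 w^2 + \int (\partial_y v)^2 (w^2)' \,dy = \{\text{nonlinear remainder}\}$, using the non-negativity of $(w^2)' \sim \delta \operatorname{sech}^2(\delta y)$; it is essential for closing cubic/quartic interactions such as $\int Q_c v_1 v_2 w^2\,dy$ that $L^\infty \cdot L^2$ estimates alone cannot handle.

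The main obstacle is matching decay rates across the three norms: the $\langle t\rangle^{-m}$ rate for $v_2$ in $L^2_w$ must be reconciled with the slow $t^{-1/3}$ decay of $v_1$ in $L^\infty$; the one-sided weight $\langle y_+\rangle^m$ must interact cleanly with the transport term produced by the modulation (which is why the weight is one-sided rather than two-sided); and the Kato smoothing \eqref{estsoliton2} must be bootstrapped together with the pointwise bounds while simultaneously supplying the extra integrability needed to close the nonlinear terms. Once the three estimates are propagated together on $[0,T]$, the modulation bound gives an integrable decay for $c'$ and $h'$ (since $m > 1/2$), so that $c(t) \to c_+$ and $h(t) \to x_+$ as $t \to \infty$, providing the soliton parameters in Theorem \ref{maintheo2}.
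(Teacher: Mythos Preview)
Your overall architecture matches the paper's: bootstrap, modulation to kill the generalized kernel, Mizumachi splitting $v = v_1 + v_2$, virial for $v_1$, Pego--Weinstein semigroup for $v_2$. The structural observation that every term in the $v_2$ forcing carries a factor of $Q_c$ or $v_2$ is exactly what the paper exploits. But there is a real gap in how you obtain the rate $\langle t \rangle^{-m}$.

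First, you invoke Theorem~\ref{maintheo1} to get $\|v_1\|_{L^\infty} \lesssim t^{-1/3}$, but the hypothesis \eqref{hypv0sol} only assumes the one-sided weight $\langle x_+ \rangle^m v_0 \in H^1$, not the two-sided $\langle x \rangle v_0 \in H^1$ that Theorem~\ref{maintheo1} requires, so that dispersive estimate is not available here (it is only used later, in the proof of Theorem~\ref{maintheo2}, where the two-sided weight is assumed).

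Second, and more importantly, even if it were available, $t^{-1/3}$ would not deliver $\langle t \rangle^{-m}$. Estimating the forcing term $Q_c^2 v_1$ your way gives $\|Q_c^2 v_1\|_{L^2_a} \lesssim \|v_1\|_{L^\infty} \lesssim t^{-1/3}$, and convolving against $e^{-\beta(t-s)}$ yields only $\|v_2\|_{L^2_a} \lesssim t^{-1/3}$. What the paper does instead is bound such terms by $\|v_1\|_{H^1_w}$ (the exponential localization of $Q_c$ trades the $e^{ay}$ weight for $w$, cf.~\eqref{lienpoids}), and it obtains $\|v_1(t)\|_{H^1_w} \lesssim \langle t \rangle^{-m}$ directly from a virial identity with a \emph{travelling} weight $\phi(y + \sigma t - \sigma\tau)$, $0 < \sigma < c_0/2$. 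Monotonicity from $0$ to $\tau$ then gives
\[
\|v_1(\tau)\|_{H^1_w}^2 \lesssim \int \phi(y - \sigma\tau)\, (|v_0|^2 + |\partial_y v_0|^2)\, dy \lesssim \langle \tau \rangle^{-2m} \|\langle y_+\rangle^m v_0\|_{H^1}^2,
\]
since $\phi(y - \sigma\tau)/\langle y_+\rangle^{2m} \lesssim \langle \tau \rangle^{-2m}$. This is the mechanism that converts the one-sided spatial weight on the data into temporal decay in the localized norm; your static virial with weight $\langle y_+\rangle^{2m}$ only propagates the weighted bound and produces no time decay. Once $\|v_1\|_{H^1_w} \lesssim \langle t\rangle^{-m}$ is in hand, the Duhamel argument for $v_2$ in $H^1_a$ goes through as you describe and yields the matching rate, after which the modulation bound $|c'|+|h'| \lesssim \|v\|_{L^2_w}^2 \lesssim \langle t\rangle^{-2m}$ follows.
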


Note that this Theorem gives in particular that perturbations of a solitary wave decay to its right.
This kind of result was already obtained in \cite{Pego-Weinstein1,Mizumachi1,Martel-Merle}. Nevertheless, we establish
 here a form of the result which is appropriate for the proof of Theorem \ref{maintheo2}. In particular, we prove
  rates of decay that will be useful in order to describe the radiation behind the solitary wave,
  following the approach of the previous section in a second step.

\bigskip
\subsection{Proof of Theorem \ref{theosoliton2}}\label{secsoliton1}
 We shall split the proof in several steps.

\subsubsection*{Step 1: Linear estimates in exponentially  weighted spaces}
In this first step, we shall recall the properties of the equation  \eqref{mKdVbis} linearized about the solitary wave $Q_{c}$.
By changing variables from $x$ to $ y= x-ct$, we obtain the linearized equation
\beq
\label{KdVlinK}
\partial_{t} v - c \partial_{y} v + \partial_{y}^3 v + 3 \partial_{y}\left(  Q_{c}^2 v\right) = 0.
\eeq
Let us denote by $S_{c}(t)$ the linear group associated to this linear equation, so that the solution
of \eqref{KdVlinK} with initial value $v_{0}$ can be written as $v(t)= S_{c}(t) v_{0}$.
We shall  recall  the decay results  for $S_{c}$  obtained by  Pego-Weinstein \cite{Pego-Weinstein1}  by using the weighted norms
$$
\|f \|_{L^2_{a}} := \|e^{a y} f \|_{L^2}, \quad  \| f \|_{H^1_{a}}^2 :=  \|e^{a y} f \|_{L^2}^2 + \|e^{a y}\partial_y  f \|_{L^2}^2
$$
where $a$ is chosen  so that
\beq
\label{achoix} 0 < a < \sqrt{c/3}.
\eeq

Let us define
\begin{align}
 \label{L_c}
\mathcal{L}_{c} :=  \partial_{y}\big( - c v + \partial_{y}^2 v  + 3 Q_{c}^2 v).
\end{align}
and  $\xi_{c}^1(y)=  \partial_{y} Q_{c}$, $  \xi_{c}^2 (y) = \partial_{c} Q_{c}$, that describe the generalized kernel of $\mathcal{L}_{c}$:
\begin{align*}
\mathcal{L}_{c} \xi_{c}^1= 0, \quad \mathcal{L}_{c}  \xi_{c}^2 = \xi_{c}^1.
\end{align*}

To define a projection on this generalized kernel, we use the generalized kernel of the adjoint (for the $L^2$ scalar
product) $\mathcal{L}_{c}^*$.  Let us set
\begin{equation}
\label{zetadef}
\zeta_{c}^1 (y)= - \alpha_{1} \left(  \int_{- \infty}^y \partial_{c} Q_{c} \right)+ \alpha_{2} Q_{c}(y), \quad  \zeta_{c}^2 (y)= \alpha_{1} Q_{c},
\end{equation}
where the normalization factors $\alpha_{1}$ and $\alpha_{2}$ are chosen so that\footnote{Note that 
these integrals are well defined thanks to the  fast decay of the $\xi_{c}^i$.}
$$ \int \xi_{c}^i \zeta_{c}^j= \delta_{ij}, \quad 1 \leq i,\, j \leq 2.$$
Note that
$$
\mathcal{L}_c^* \zeta_1 = \zeta_2, \qquad \mathcal{L}_c^* \zeta_2 = 0 .
$$
Define the projections
\begin{align}
\label{pcqc}
\mathcal{P}_{c} v = (v, \zeta_{c}^1)_{L^2} \xi_{c}^1 + (v, \zeta_{c}^2)_{L^2} \xi_{c}^2, \qquad \mathcal{Q}_{c}= I - \mathcal{P}_{c}
\end{align}
Note that these projections are well defined on $L^2_{a}$ and commute with $\mathcal{L}_c$ as well as $S_c(t)$ for all $t$. 
From the linear stability of the solitary wave, one has:

\begin{theorem}[Pego-Weinstein \cite{Pego-Weinstein1}, Theorem 4.2]\label{theoPego}
We have the following decay and smoothing estimates:
\begin{align*}
&  \| S_{c}(t) \mathcal{Q}_{c} v \|_{L^2_{a}} \lesssim  e^{- bt } \|v \|_{L^2_{a}},
\\
&    \| S_{c}(t) \mathcal{Q}_{c} v \|_{H^1_{a}} + \|  S_{c}(t) \mathcal{Q}_{c}  \partial_{y }v \|_{L^2_{a}}  \lesssim
     e^{- bt }  \max \big( 1, t^{-1/2} \big) \|v \|_{L^2_{a}}.
\end{align*}
for some $b>0$.
\end{theorem}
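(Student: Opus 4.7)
The plan is to reduce everything to a semigroup bound on $L^2$ by conjugation with the weight $e^{ay}$, and then establish the spectral picture of the conjugated operator following Pego-Weinstein. Set $w := e^{ay}v$; the operator $\mathcal{L}_c$ becomes
$$ \mathcal{L}_c^a := e^{ay}\mathcal{L}_c e^{-ay} = -c(\partial_y - a) + (\partial_y - a)^3 + 3(\partial_y - a)(Q_c^2 \, \cdot), $$
whose constant-coefficient limit $L_0^a$ as $|y|\to\infty$ has Fourier symbol $\lambda^a(\xi) = -c(i\xi-a)+(i\xi-a)^3$ satisfying
$$ \operatorname{Re}\lambda^a(\xi) = a(c-a^2) + 3a\xi^2 \geq \tfrac{2ac}{3} =: b_0 > 0 \qquad \text{for } 0<a<\sqrt{c/3}. $$
Thus the free evolution $e^{-tL_0^a}$ decays in $L^2$ at rate $e^{-b_0 t}$ and, thanks to the gaussian factor $e^{-3a\xi^2 t}$ in its symbol, satisfies the heat-type smoothing $\|e^{-tL_0^a}\|_{L^2 \to H^1} \lesssim e^{-bt}\max(1,t^{-1/2})$ for any $b<b_0$. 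Since $Q_c$ decays like $e^{-\sqrt{c}|y|}$ and $a<\sqrt{c/3}<\sqrt{c}$, the perturbation $V := \mathcal{L}_c^a - L_0^a$ has exponentially decaying coefficients, so that $V(L_0^a - \mu)^{-1}$ is compact on $L^2$ by Rellich.

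The crux of the argument---and the main obstacle---is the spectral analysis of $\mathcal{L}_c^a$ on $L^2$. By Weyl's theorem the essential spectrum coincides with the curve $\{\lambda^a(\xi):\xi\in\R\}$, which lies in $\{\operatorname{Re}\mu\geq b_0\}$. The generalized kernel at $\mu=0$ is two-dimensional, spanned by $e^{ay}\xi_c^1$ and $e^{ay}\xi_c^2$, which belong to $L^2$ since $\xi_c^{1,2}$ decay like $e^{-\sqrt{c}|y|}$. The essential additional input is the absence of any other spectrum in the closed strip $\{0\leq\operatorname{Re}\mu<b_0\}$ and the fact that $0$ has algebraic multiplicity exactly $2$. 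This is precisely the spectral stability of the mKdV soliton, established by Pego-Weinstein through the construction of an Evans function $D(\mu)$ on the right half-plane whose zeros near $\mu=0$ are analyzed via the factorization $\mathcal{L}_c = -\partial_y L_c$ with $L_c = -\partial_y^2 + c - 3Q_c^2$ a self-adjoint Schrödinger operator of explicit $\operatorname{sech}^2$-type (one negative eigenvalue, the zero mode $Q_c$, no others), and by tracking $D(\mu)$ at infinity to rule out additional zeros in the strip.

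Granted the spectral gap, for any $b\in(0,b_0)$ represent the restricted semigroup by the Dunford contour integral
$$ e^{-t\mathcal{L}_c^a}\mathcal{Q}_c^a = \frac{1}{2\pi i}\int_{\operatorname{Re}\mu = b} e^{-\mu t}(\mathcal{L}_c^a - \mu)^{-1}\mathcal{Q}_c^a\,d\mu, $$
where the resolvent bound $\|(\mathcal{L}_c^a-\mu)^{-1}\|_{L^2\to L^2}\lesssim (1+|\mu|)^{-1}$ on the contour is inherited from $L_0^a$ via the second resolvent identity $(\mathcal{L}_c^a-\mu)^{-1} = (L_0^a-\mu)^{-1} - (L_0^a-\mu)^{-1} V (\mathcal{L}_c^a-\mu)^{-1}$ and the absence of point spectrum established above. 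This yields $\|e^{-t\mathcal{L}_c^a}\mathcal{Q}_c^a\|_{L^2\to L^2}\lesssim e^{-bt}$, which un-conjugates to the first estimate. For the smoothing estimate, apply the same contour representation after transferring the $L^2\to H^1$ mapping property of $(L_0^a-\mu)^{-1}$ to the full resolvent via the resolvent identity; the $\max(1,t^{-1/2})$ factor is extracted either by deforming the contour onto the parabola $\operatorname{Re}\mu = b - c_0|\operatorname{Im}\mu|^{2/3}$ (picking up a factor $|\mu|^{1/3}$ that integrates to $t^{-1/2}$) or by splitting into the regimes $t\leq 1$ and $t\geq 1$ and using explicit heat-kernel bounds on the short-time side. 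The companion bound with data $\partial_y v$ follows from the same argument, after observing that $[\mathcal{L}_c^a,\partial_y]$ is of strictly lower order and its contribution is absorbed in the exponential decay.
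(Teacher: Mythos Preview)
The paper does not prove this theorem at all: it is quoted as a black box from Pego--Weinstein \cite{Pego-Weinstein1}, Theorem~4.2, and used as an input in the Duhamel argument of Step~6. So there is no ``paper's own proof'' to compare against; you are effectively sketching the original Pego--Weinstein argument, and your outline follows it faithfully (conjugation by $e^{ay}$, location of the essential spectrum of the constant-coefficient limit, compact perturbation, Evans-function analysis of the point spectrum, Gearhart--Pr\"uss/contour-integral representation of the restricted semigroup, and parabolic smoothing from the $3a\xi^2$ term in the symbol).

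A few places in your sketch are soft enough that they would need real work in a complete proof. First, the uniform resolvent bound $\|(\mathcal L_c^a-\mu)^{-1}\|_{L^2\to L^2}\lesssim(1+|\mu|)^{-1}$ along the vertical line is not immediate from the second resolvent identity alone: you need a large-$|\mu|$ estimate showing $\|V(L_0^a-\mu)^{-1}\|<1$ uniformly, which requires the $|\mu|^{1/3}$ gain from the cubic symbol rather than just compactness. Second, the smoothing estimate for the perturbed operator is the place where Pego--Weinstein actually do some work; your ``transfer via the resolvent identity'' is the right idea but hides a bootstrap. Finally, the last sentence about $[\mathcal L_c^a,\partial_y]$ being lower order does not quite address the stated estimate $\|S_c(t)\mathcal Q_c\partial_y v\|_{L^2_a}$: here the clean route is duality, since $\partial_y$ is bounded $L^2_a\to H^{-1}_a$ and the adjoint semigroup enjoys the same $L^2_a\to H^1_a$ smoothing. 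None of these are fatal; they are the points where your sketch would need to be filled in to become a proof.
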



 By induction, we can deduce from the above estimates and the Duhamel formula  that
 \begin{align*}
&  \| S_{c}(t) \mathcal{Q}_{c} v \|_{H^k_{a}} \lesssim  e^{- bt } \|v \|_{H^k_{a}},
\\
&    \| S_{c}(t) \mathcal{Q}_{c} v \|_{H^{k+1}_{a}} + \|  S_{c}(t) \mathcal{Q}_{c}  \partial_{y }v \|_{H^k_{a}}  \lesssim
     e^{- bt }  \max \big( 1, t^{-1/2} \big) \|v \|_{H^k_{a}}
\end{align*}
for every $k \geq 0$.
\subsubsection*{Step 2: Decomposition of the perturbation}
The perturbation of the solitary wave $v(t,y)$  defined in \eqref{usplit} evolves according to
\beq
\label{eqperturb}
\partial_{t} v - \tilde{c} \partial_{y} v + 3\partial_{y} (Q_{c(t)}^2 v) + \partial_{y}^3 v = \partial_{y}\mathcal{F} (v) + e_{Q}
  , \quad v_{/t=0}= v_{0}(x)
\eeq
where
\begin{align}
& \nonumber\tilde c(t) = c(t)- \dot h(t) \\
& \label{eQ} e_{Q}(t,y) =  \dot c \partial_{c} Q_{c(t)}(y) + \dot h  \partial_{y} Q_{c(t)}(y) = \dot c \xi_{c(t)}^2(y) + \dot h \xi_{c(t)}^1(y) \\
& \nonumber  \mathcal{F}(v)= - \left( ( Q_{c}+ v)^3 - Q_{c}^3 - 3 Q_{c}^2 v \right).
\end{align}
The  modulation parameters $(h(t), c(t))$ will be chosen to ensure the constraint
\beq
\label{contrainte2}
(v, \zeta_{c}^1)_{L^2}= (v, \zeta_{c}^2)_{L^2}= 0.
\eeq
Note that these constraints are always well defined (even the first one)
when $v$ is such that $\langle y_+ \rangle^m v \in L^2$ for $m>1/2$.

We shall use Mizumachi's  \cite{Mizumachi2} approach that consists in splitting  $v(t,y)$ defined in \eqref{usplit} into
\beq
\label{mizudec}
v (t,y)= v_{1}(t,y)+ v_{2}(t,y)
\eeq
where $v_1$ will be estimated in $H^1_w$, and $v_2$ in $H^1_a$.
We choose $v_{1} (t,y)$ as the solution of the free  nonlinear  equation
\beq
\label{freemKdV}
\partial_{t}  v_{1} -  \tilde{c} \partial_{y} v_{1} + \partial_{y}^3 v_{1} + \partial_{y} v^3_{1} = 0 , \quad  v_1(0)= v_{0},
\eeq
and $v_{2}$ as the solution of
\beq
  \label{KdVexp}
  \partial_{t} v_{2} - \tilde{c} \partial_{y} v_{2}  +  3  \partial_{y} (Q_{c(t)}^2 v_{2})  + \partial_{y}^3 v_2 =
  \partial_{y}\mathcal{N} (v) + e_{Q} , \quad v_{2}(0)= 0 ,
\eeq
with
\begin{align}
\label{defNdev}
\mathcal{N}(v)=  -  ( Q_{c(t)} + v_{1}+ v_{2})^3  + Q_{c(t)}^3  + v_{1}^3  +  3 Q_{c(t)}^2 v_{2}.
\end{align}

We shall solve this equation for $v_{2}$ in the weighted space $H^{1}_{a}$ by using estimates for the linear semigroup $S_c$.
Note that the choice of the equation for $v_{2}$ is made
in order to ensure that the source term $\mathcal{N}(v)$ that involves  $v_{1}$ lies in the weighted space $L^2_{a}$.

Let us define the norm:
\begin{align}
\label{N(t)}
\begin{split}
N(t) :=   
\langle t \rangle^m ( {\|v_{1}(t) \|}_{H^1_{w}} + {\| v_{2}(t) \|}_{H^1_{a}}) & + {\|\langle y_{+} \rangle^m v_{1}(t) \|}_{H^1} + {\|v_{2}\|}_{H^1}
\\
& + {| c(t) - c_{0}|} +  {| h(t) - h_{0}|} ,
\end{split}
\end{align}
with the parameters $\delta$ in the definition of $w$, and $a$ in the exponential weights,
chosen so that the following relations hold:
\beq
\label{lienpoids}
Q_{c  \pm  5 \eps_{0}}^{1/3} e^{\kappa |x|} \lesssim w + w' \lesssim e^{a x} , \quad   \, \forall x \in \mathbb{R},
\eeq
for a small constant $\kappa>0$.

\bigskip
\noindent
\underline{The bootstrap argument.} We assume that
\beq
\label{hypbootstrapsol}
N(t) \lesssim \tilde{\epsilon}_1, \quad \forall \,\, t \in [0,T]
\eeq
 and we will prove that, for all $t \in [0, T]$
$$
N(t) \lesssim \epsilon_0^{1\over 2}.
$$
It will be convenient to use also the  quantity
$$ M(t) =  \sup_{s\in[0, t]} \Big(  \langle s \rangle^m ( {\|v_{1}(s) \|}_{H^1_{w}} + {\| v_{2}(s) \|}_{H^1_{a}} \Big).$$
Note that by the bootstrap assumption, we also have that $M(t) \leq \tilde{\eps}_{1}$ on $[0, T]$.

\subsubsection*{Step 3: $H^1$ estimate}
In this step we shall prove that
\begin{proposition}
\label{vH1}
For $t \in [0,T]$ we have the estimate
$$ \|v_{1}(t)\|_{H^1} \lesssim  \eps_{0},
  \quad  \|v_{2}(t)\|_{H^1}^2  \lesssim \eps_{0} + ( 1 +\tilde \eps_{1}) (\|v_{1}(t)\|_{H^1_{w}} + \|v_{2}(t) \|_{H^1_{a}} + |c(t) - c_{0}|) .$$
\end{proposition}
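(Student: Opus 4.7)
The bound on $v_1$ comes from conservation laws for the transport-perturbed mKdV equation \eqref{freemKdV}. The antisymmetric transport term $\tilde c(t)\partial_y v_1$ contributes zero to integrated quadratic or quartic identities, so a direct computation shows that $M(v_1) = \int v_1^2\,dy$ and $H(v_1) = \int \tfrac{1}{2}(\partial_y v_1)^2 - \tfrac{1}{4}v_1^4\,dy$ are both conserved. Given $\|v_0\|_{H^1}\leq \eps_0$, mass conservation yields $\|v_1(t)\|_{L^2} \lesssim \eps_0$, while energy conservation combined with the Gagliardo--Nirenberg inequality $\int v_1^4 \lesssim \|v_1\|_{L^2}^3\|\partial_y v_1\|_{L^2}$ and Young's inequality absorbs the quartic term to give $\|\partial_y v_1(t)\|_{L^2}^2 \lesssim \eps_0^2$, hence $\|v_1(t)\|_{H^1}\lesssim \eps_0$.

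For $v_2$, I would work with the full perturbation $V := v_1 + v_2 = u - Q_{c(t)}$ (in the moving frame), which by construction satisfies the orthogonality $(V, \zeta_{c(t)}^i)_{L^2} = 0$ for $i=1,2$. In particular $(V, Q_{c(t)})_{L^2}=0$ since $\zeta_c^2 = \alpha_1 Q_c$. From conservation of mass,
\[
\|V\|_{L^2}^2 = M(u_0) - M(Q_{c(t)}) = O(\eps_0) + [M(Q_{c_0}) - M(Q_{c(t)})] \lesssim \eps_0 + |c(t)-c_0|,
\]
using $M(Q_c) = \sqrt{c}\|Q\|_{L^2}^2$. From conservation of energy, Taylor expanding about $Q_{c(t)}$ and using $H'(Q_c) = -c\, Q_c$ together with $(V, Q_c) = 0$ to kill the linear term gives
\[
H(u_0) - H(Q_{c(t)}) = \tfrac{1}{2}\Big[\|\partial_y V\|_{L^2}^2 - 3\!\int\! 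Q_{c(t)}^2 V^2\,dy\Big] + O(\|V\|_{H^1}^3).
\]
Weinstein-type coercivity of $-\partial_y^2 + c - 3Q_c^2$ under the orthogonality $(V,\zeta_c^i)=0$ yields $\|\partial_y V\|_{L^2}^2 - 3\int Q_c^2 V^2 \geq c_*\|V\|_{H^1}^2 - c(t)\|V\|_{L^2}^2$; combined with the mass bound, the Taylor expansion of $H(Q_c)$ in $c$ (which gives $|H(u_0) - H(Q_{c(t)})|\lesssim \eps_0 + |c(t)-c_0|$), and the bootstrap smallness $\|V\|_{H^1}\lesssim \tilde\eps_1$ absorbing the cubic remainder, this produces $\|V\|_{H^1}^2 \lesssim \eps_0 + |c(t)-c_0|$. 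The claimed estimate then follows from $\|v_2\|_{H^1}\leq \|V\|_{H^1}+\|v_1\|_{H^1}$; the weighted contributions $(1+\tilde\eps_1)(\|v_1\|_{H^1_w}+\|v_2\|_{H^1_a})$ arise naturally once one is more careful with the cross-terms $\int Q_c v_1 v_2$ and $\int Q_c V^3$, using \eqref{lienpoids} to dominate $Q_c$ pointwise by the weights $w$ and $e^{ay}$ before Cauchy--Schwarz.

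The main obstacle is the coercivity statement under our specific orthogonality $(V, \zeta_c^i)=0$, which is phrased via the adjoint generalized kernel rather than the classical conditions $(V, Q_c) = (V, \partial_c Q_c) = 0$. One handles this by exploiting the biorthogonality $(\xi_c^i, \zeta_c^j)=\delta_{ij}$ to identify the orthogonal complement of $\{\zeta_c^1,\zeta_c^2\}$ with the range of the projector $\mathcal{Q}_c$ onto the stable spectral subspace of $\mathcal{L}_c$, on which the classical Weinstein estimate applies; this parallels the proof of linear stability in \cite{Pego-Weinstein1}. The cubic and quartic remainders $\int Q_c V^3$ and $\int V^4$ in the energy expansion are routinely controlled via $H^1\hookrightarrow L^\infty$ and the bootstrap bound $\|V\|_{H^1}\lesssim \tilde\eps_1 \ll 1$, ensuring they are genuinely higher order.
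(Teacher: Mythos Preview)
Your argument for $v_1$ is identical to the paper's: conservation of mass and energy for \eqref{freemKdV}, then Gagliardo--Nirenberg.

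For $v_2$ your route is genuinely different from the paper's, and in fact the paper explicitly flags yours as an alternative: right after stating the proposition the authors remark ``We could use the orbital stability of the solitary wave to get better estimates at this stage.'' That is precisely what you do. The paper instead expands the conserved mass of $u$ directly as
\[
\int |v_2|^2 = \int(Q_{c_0}^2 - Q_{c(t)}^2) - \int v_1^2 - 2\!\int Q_{c(t)} v_1 - 2\!\int Q_{c(t)} v_2 - 2\!\int v_1 v_2 + O(\eps_0),
\]
and bounds the cross terms $\int Q_c v_i$ by the weighted norms $\|v_1\|_{H^1_w}$, $\|v_2\|_{H^1_a}$ via the pointwise domination \eqref{lienpoids}; the Hamiltonian is treated the same way for $\|\partial_y v_2\|_{L^2}^2$. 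This is more elementary---no spectral coercivity needed---but yields the weaker bound stated in the proposition, with the weighted terms on the right-hand side. Your orbital-stability argument, using $(V,Q_c)=0$ to kill the cross term in the mass and Weinstein coercivity of $L_c=-\partial_y^2+c-3Q_c^2$ in the energy, gives the sharper $\|V\|_{H^1}^2\lesssim \eps_0+|c(t)-c_0|$; the weighted contributions in the proposition's statement then hold \emph{a fortiori}, so your closing remark about needing them for the cubic remainders is unnecessary.

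Your identification of the coercivity under the $\zeta_c^i$-orthogonality as the one nontrivial point is accurate. The resolution you sketch works: since $\langle L_c V,V\rangle\geq 0$ on $\{(V,Q_c)=0\}$ with kernel $\operatorname{span}\{Q_c'\}$, strict coercivity follows once one also imposes orthogonality to any $\phi$ with $(\phi,Q_c')\neq 0$; here $(\zeta_c^1,Q_c')=(\zeta_c^1,\xi_c^1)=1$ by the biorthogonality built into \eqref{pcqc}, so the needed condition holds.
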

 Note that the last estimate does not seem appropriate for the bootstrap. Nevertheless, we shall prove below
 that   the  estimates for  $\|v_{1}(t)\|_{H^1_{w}},$ $  \|v_{2}(t) \|_{H^1_{a}}$ and  $|c(t) - c_{0}|$  are much better  behaved
  in the sense that these quantities  can be  estimated in terms of  $\eps_{0}$ if $\tilde{\eps}_{1}$ is sufficiently small. We could
   use the orbital stability of the solitary wave to get better estimates at this stage.

\begin{proof}[Proof of Proposition \ref{vH1}]
For the KdV type equation \eqref{freemKdV} we have the conservation of the quantities
$$ \int_{\mathbb{R}} |v_{1}|^2 \, dx, \quad  \int_{\mathbb{R}} \left( \frac{1}{2} | \partial_{x} v_{1}|^2 - \frac{v_{1}^4}{4} \right) \, dx .$$
Using these and Sobolev inequalities we easily get
\beq
\label{v1H1}
\|v_{1}(t) \|_{H^1} \lesssim \eps_{0}, \quad \forall \,\, t \in [0, T].
\eeq

To estimate $v_{2}$ we use the conserved quantities for \eqref{mKdVbis}.
The mass conservation
$$ \int_{\mathbb{R}} |u(t,x)|^2 \, dx = \int_{\mathbb{R}} | Q_{c_{0}} (x) + v_{0}(x) |^2 \, dx$$
implies, after expanding $u$ as in \eqref{usplit} and \eqref{mizudec}, that
$$
\int_{\mathbb{R}} |v_1 + v_2 + Q_{c(t)}|^2 \,dx = \int Q_{c_0}^2 \,dx + O(\epsilon_0),
$$
and thus
$$
\int_{\mathbb{R}} | v_{2}|^2\, dy = \int_{\mathbb{R}} (Q_{c_{0}}^2 - Q_{c(t)}^2) \, dy
  - \int_{\mathbb{R}} v_{1}^2\, dy  - 2\int_{\mathbb{R}} Q_{c(t)} v_{1}\, dy
  - 2 \int_{\mathbb{R}} Q_{c(t)} v_{2} \, dy  - 2 \int_{\mathbb{R}} v_1 v_2 \, dy+  O(\eps_{0}).
$$
This yields
$$ \| v_{2}(t) \|_{L^2}^2 \lesssim \eps_{0} + |c(t) - c_{0}| +  \|v_{1}(t) \|_{H^1_{w}} + \|v_{2}\|_{H^1_{a}} , $$
if $\tilde \eps_{1}$ is chosen small enough.

To estimate $\|  \partial_{x} v_{2}(t) \|_{L^2}^2$ one can proceed in a similar way, by using the conservation of the Hamiltonian
for \eqref{mKdVbis}.
\end{proof}

\subsubsection*{Step 4: Estimates of the modulation parameters}

The existence of the modulation parameters is based on the following:
\begin{lemma}\label{lemmodpar}
Let $c_{0} > 0$, $h_{0} \geq 0$.
There exists $\delta >0$ such that for every $w$ satisfying
\begin{align*}
w(t) - Q_{c_{0}}(\cdot - c_{0}t + h_{0}) \in \mathcal{C}^1([0, T_{0}], H^1_{\langle x \rangle_{+}^m})
\end{align*}
for some $m>1/2$, with
\begin{align*}
\sup_{[0,T_{0}]} \|\langle (\cdot + h_{0})_{+}^m \rangle\left( w(t) - Q_{c_{0}}(\cdot - c_{0}t + h_{0}) \right) \|_{H^1}  < \delta ,
\end{align*}
there exists $(h(t), c(t)) \in \mathcal{C}^1([0, T_{0}])$ such that
$$ \int_{\mathbb{R}} \left( w(t,x) - Q_{c(t)}(y) \right) \zeta_{c(t)}^k(y) \,dx= 0, \quad k= 1, \, 2$$
where $ y = x - \int_{0}^t c(s) \, ds + h(t)$.
\end{lemma}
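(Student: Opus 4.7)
The plan is to follow the classical modulation-theoretic strategy: apply the implicit function theorem at $t = 0$ to select initial values $(h(0), c(0))$, then extend to all of $[0, T_0]$ by recasting the orthogonality conditions as a differential-algebraic system and reducing it to a standard ODE.

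For the initial step, I would define
$$G_k(h, c) := \int \big( w(0, x) - Q_c(x + h) \big) \zeta_c^k(x + h) \, dx, \qquad k = 1, 2,$$
noting that $\mathcal{T}(0) = 0$ so that $y = x + h$. At the base point, $G_k(h_0, c_0) = \int v_0(x) \zeta_{c_0}^k(x + h_0) \, dx$ with $v_0 := w(0) - Q_{c_0}(\cdot + h_0)$, and this is of size $O(\delta)$ since $\zeta_{c_0}^k$ is uniformly bounded and $v_0$ satisfies the weighted bound (with $\langle y_+ \rangle^{-m} \in L^2(\mathbb{R}_+)$ for $m > 1/2$). Differentiating and using the biorthogonality $\int \xi_{c_0}^i \zeta_{c_0}^j = \delta_{ij}$, one finds
$$D_{(h,c)} G \big|_{(h_0, c_0)} = -I_{2 \times 2} + O(\delta),$$
which is invertible for $\delta$ small. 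The implicit function theorem then provides a unique $(h(0), c(0))$ near $(h_0, c_0)$ with $G(h(0), c(0)) = 0$ and $|h(0) - h_0| + |c(0) - c_0| \lesssim \delta$.

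For the time extension, I would set $\mathcal{T}(t) := \int_0^t c(s) \, ds$ and rewrite the constraint as
$$F_k(t, h, c, \mathcal{T}) := \int \big( w(t, x) - Q_c(x - \mathcal{T} + h) \big) \zeta_c^k(x - \mathcal{T} + h) \, dx = 0,$$
coupled with the equation $\dot{\mathcal{T}} = c$. Differentiating $F_k = 0$ in $t$ yields
$$\partial_t F_k + (\partial_h F_k)\, \dot h + (\partial_c F_k)\, \dot c + c \, (\partial_{\mathcal{T}} F_k) = 0,$$
and the $2 \times 2$ matrix of $(h, c)$-derivatives equals $-I + O(\delta)$ by the same computation as above, hence is invertible throughout the extension. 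Solving for $(\dot h, \dot c)$ gives an ODE $(\dot h, \dot c, \dot{\mathcal{T}}) = \Phi(t, h, c, \mathcal{T})$ with $\Phi$ continuous in $t$ (using $w \in \mathcal{C}^1([0, T_0], H^1_{\langle x_+ \rangle^m})$) and Lipschitz in $(h, c, \mathcal{T})$. Cauchy--Lipschitz produces a unique $\mathcal{C}^1$ local solution from the initial data of the first step, and a bootstrap argument---using the uniform-in-$t$ smallness of $w(t) - Q_{c_0}(\cdot - c_0 t + h_0)$ to keep the Jacobian invertible and $(h, c, \mathcal{T} - c_0 t)$ close to $(h_0, c_0, 0)$---extends the solution to the whole interval $[0, T_0]$.

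The only delicate point is the integrability in the orthogonality integrals: while $\xi_c^i$ and $\partial_c \zeta_c^k$ decay exponentially at both infinities, the function $\zeta_c^1$ is only known a priori to be bounded on the right (it contains the primitive $-\alpha_1 \int_{-\infty}^y \partial_c Q_c$). Pairing $v \cdot \zeta_c^1$ therefore requires some decay on $v$ for $y > 0$, and the polynomial weight $\langle y_+ \rangle^m$ with $m > 1/2$ provides exactly enough: combined with $L^2$, it yields the required $L^1(\mathbb{R}_+)$ integrability. All other terms appearing in the Jacobian computation present no difficulty.
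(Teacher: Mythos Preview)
Your proposal is correct and follows exactly the approach the paper indicates: the paper does not actually prove this lemma but simply states that ``the proof of this lemma is now very classical and relies on the use of the implicit function Theorem'' and refers to Pego--Weinstein and Mizumachi. Your sketch (IFT at $t=0$ using the biorthogonality $\int \xi_c^i \zeta_c^j = \delta_{ij}$ to get Jacobian $-I+O(\delta)$, then propagation in time) is precisely the classical argument those references carry out, so you have supplied more detail than the paper itself.
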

The proof of this lemma is now very  classical and relies on the use of the implicit function Theorem.
We refer to \cite[Proposition 5.1]{Pego-Weinstein1} or \cite[Proposition 3.1]{Mizumachi1} for the proof.

By using Lemma \ref{lemmodpar} for $ w= u$, we get the existence of $c(t)$ and $h(t)$ such that the decompositions \eqref{usplit}, and \eqref{mizudec} with \eqref{contrainte2} hold.
\begin{proposition}\label{propshift}
On $[0, T]$ we have the following  estimates for the modulation parameters:
$$ |\dot h (t) | + |\dot c(t) | \lesssim \langle t \rangle^{-2m} M(t)^2. $$
\end{proposition}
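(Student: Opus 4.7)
The plan is to differentiate the orthogonality conditions \eqref{contrainte2} in time, which yields a $2\times 2$ linear system for $(\dot h, \dot c)$ whose right-hand side is quadratic in $v$ (measured in weighted norms) and whose matrix is a small perturbation of the identity under the bootstrap assumption \eqref{hypbootstrapsol}. The output is then controlled by the norms already encoded in $M(t)$.

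\textbf{Step 1 (Setting up the system).} Rewriting equation \eqref{eqperturb} with $\tilde c = c-\dot h$ gives
$$
\partial_t v = -\mathcal{L}_c v - \dot h \, \partial_y v + \partial_y \mathcal{F}(v) + \dot c\,\xi_c^2 + \dot h\,\xi_c^1.
$$
Differentiating $(v,\zeta_{c(t)}^k)_{L^2}=0$ in $t$ gives $(\partial_t v,\zeta_c^k)+\dot c\,(v,\partial_c\zeta_c^k)=0$. Taking the $L^2$ pairing of the displayed equation with $\zeta_c^k$, and using $\mathcal{L}_c^*\zeta_c^1=\zeta_c^2$, $\mathcal{L}_c^*\zeta_c^2=0$, $(v,\zeta_c^k)=0$, and $(\xi_c^i,\zeta_c^j)=\delta_{ij}$, we obtain the system
$$
\begin{pmatrix} 1-(\partial_y v,\zeta_c^1) & (v,\partial_c\zeta_c^1) \\ -(\partial_y v,\zeta_c^2) & 1+(v,\partial_c\zeta_c^2) \end{pmatrix}\begin{pmatrix}\dot h\\ \dot c\end{pmatrix} = -\begin{pmatrix}(\partial_y\mathcal{F}(v),\zeta_c^1)\\ (\partial_y\mathcal{F}(v),\zeta_c^2)\end{pmatrix}.
$$
The off-diagonal and corrective entries are pairings of $v$ or $\partial_y v$ against the exponentially localized functions $\zeta_c^k,\partial_c\zeta_c^k$, and are $O(\|v_1\|_{L^2_w}+\|v_2\|_{L^2_a})=O(\tilde\epsilon_1)$ by \eqref{hypbootstrapsol} and \eqref{lienpoids}. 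For $\tilde\epsilon_1$ small the matrix is invertible, and therefore
$$
|\dot h|+|\dot c| \lesssim \sum_{k=1,2}\big|(\partial_y\mathcal{F}(v),\zeta_c^k)\big|.
$$

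\textbf{Step 2 (Controlling the right-hand side).} Since $\mathcal{F}(v)=-(3Q_c v^2+v^3)$, an integration by parts gives
$$
(\partial_y\mathcal{F}(v),\zeta_c^k) = (3Q_c v^2 + v^3,\partial_y\zeta_c^k),
$$
and the explicit formulas \eqref{zetadef} show that $\partial_y\zeta_c^k$ is smooth and decays exponentially at both $\pm\infty$ at a rate $\sim\sqrt{c}$. Writing $v=v_1+v_2$, the choice of weights \eqref{lienpoids} (with $\delta$ small and $a$ as in \eqref{achoix}) is precisely tailored so that, pointwise, $|\partial_y\zeta_c^k|+|Q_c\,\partial_y\zeta_c^k|$ is controlled both by $w^2$ (handling the region $y\to-\infty$) and by $e^{-2ay}$ (handling the region $y\to+\infty$). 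Hence
$$
\int |v|^2\,|\partial_y\zeta_c^k|\,dy \lesssim \|v_1\|_{L^2_w}^2 + \|v_2\|_{L^2_a}^2 \lesssim \langle t\rangle^{-2m}M(t)^2,
$$
and the cubic contribution is estimated identically after extracting $\|v\|_{L^\infty}\lesssim \|v\|_{H^1}\lesssim \tilde\epsilon_1$ from Proposition \ref{vH1}. Combining with Step 1 yields $|\dot h|+|\dot c|\lesssim \langle t\rangle^{-2m}M(t)^2$, as desired.

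The main technical point is the bookkeeping needed to match the exponential decay of $\partial_y\zeta_c^k$ and $Q_c$ against the two different weights $w$ and $e^{ay}$ controlling $v_1$ and $v_2$; but this is precisely what the compatibility relation \eqref{lienpoids}, with the smallness of $\delta$ and $a$, is set up to guarantee. The invertibility of the modulation matrix in Step 1 is automatic from the bootstrap, and no other obstacle arises.
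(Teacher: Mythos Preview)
Your proof is correct and follows essentially the same approach as the paper: differentiate the constraints \eqref{contrainte2}, obtain a $2\times2$ system for $(\dot h,\dot c)$ with matrix close to the identity, and estimate the right-hand side $(\mathcal F(v),\partial_y\zeta_c^k)$ using the exponential localization of $\partial_y\zeta_c^k$ and the weighted norms on $v_1,v_2$.

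One small imprecision: in Step~1 you describe $\zeta_c^k$ and $\partial_c\zeta_c^k$ as ``exponentially localized'', but $\zeta_c^1$ (and hence $\partial_c\zeta_c^1$) is merely bounded, tending to a nonzero constant as $y\to+\infty$ (see \eqref{zetadef}). For the matrix entry $(\partial_y v,\zeta_c^1)$ this is harmless after integrating by parts to $-(v,\partial_y\zeta_c^1)$, which is localized. For $(v,\partial_c\zeta_c^1)$, however, the control does not come from $\|v_1\|_{L^2_w}+\|v_2\|_{L^2_a}$ alone (since $\partial_c\zeta_c^1\notin L^2$), but rather from $\|\langle y_+\rangle^m v_1\|_{L^2}+\|v_2\|_{L^2_a}$, both of which are in $N(t)$. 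The conclusion that the matrix is an $O(\tilde\epsilon_1)$ perturbation of the identity is therefore unchanged, and the rest of your argument goes through as written.
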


Note that by integrating in time the above estimate, we get that
\beq
\label{estshift}
  |c(t) - c_{0}| + |h(t) - h_{0}| \leq  M(t)^2, \quad \forall \,\, t  \in [0, T] .
  \eeq

\begin{proof}[Proof of Proposition \ref{propshift}]
By using the equation \eqref{eqperturb}, we get by taking the time derivatives of the constraints \eqref{contrainte2} that
the vector $\Gamma(t)= \left(h (t), c(t) \right)^t $ verifies the ODE
\beq
\label{moduleq} A(t) \dot \Gamma(t) =
  -\left( \begin{array}{cc}   (\mathcal{F}(v), \partial_{y} \zeta_{c}^1) \\ (\mathcal{F}(v), \partial_{y} \zeta_{c}^2) \end{array} \right),
\eeq
(using once again $(v,\zeta_c^2) = 0$) with
$$ A(t) =  Id -  \left(
\begin{array}{cc} ( v , \partial_{y} \zeta_{c}^1) & (v ,\partial_{c} \zeta_{c}^1)
\\
(v, \partial_{y} \zeta_{c}^2) & (v, \partial_{c} \zeta_{c}^2) \end{array}
\right) := Id - B(t).
$$
Since
$| B(t)| \lesssim \| v_{2}(t)\|_{L^2_{w}}$, we have that $A(t)$ is invertible for $\tilde \eps_{1}$ sufficiently small,
 with the norm of its inverse smaller than 2.
Moreover, we can  estimate  the right hand side of \eqref{moduleq} by using the localization provided by $\partial_{y} \zeta_{c}^i$.
In particular, we obtain that
$$
| (\mathcal{F}(v), \partial_{y} \zeta_{c}^i)|
  \lesssim ( 1 + \|v\|_{H^1}) \big( \| v_{1}\|_{L^2_{w}}^2 + \| v_{2}\|_{L^2_{a}}^2) \lesssim  \langle t \rangle^{-2m}  M(t)^2,
$$
for $t \in [0, T]$, which gives the desired estimate.
\end{proof}

\subsubsection*{Step 5: Estimates of $v_{1}$}
We shall now use localized virial type estimates in order to  estimate the weighted norms of $v_1$.
\begin{proposition}
\label{propv1}
For every $t \in [0, T]$,  we have the estimates:
\begin{align}
\label{propv1conc1}
\| v_{1}(t) \|_{H^1_{w}} \lesssim  \eps_{0} \langle t \rangle^{-m},
  \qquad
\| \langle y_{+} \rangle^m v_{1} \|_{H^1} \lesssim \eps_{0}.
\end{align}
Moreover, we also have
\begin{align}
\label{propv1conc2}
\int_{0}^t \| v_{1}(s) \|_{H^2_{w'}}^2 \, ds \lesssim \eps_{0}^2.
\end{align}
\end{proposition}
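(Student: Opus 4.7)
The proof is based on a virial-type identity for equation \eqref{freemKdV}. Multiplying by $\rho(y) v_1(t,y)$ for a smooth weight $\rho$ and integrating by parts yields
\[
\frac{d}{dt}\int\rho\,v_1^2\,dy + \tilde c(t)\int\rho'\,v_1^2\,dy + 3\int\rho'(\partial_y v_1)^2\,dy = \int\rho'''\,v_1^2\,dy + \frac{3}{2}\int\rho'\,v_1^4\,dy.
\]
Since $\tilde c(t)\geq c_0/2$ for $\tilde\eps_1$ small, the two terms on the left are coercive whenever $\rho'\geq 0$. The plan is to pick weights whose third derivative is dominated by the first (so the $\rho'''$ contribution is absorbed) and to exploit $\|v_1\|_{L^\infty}^2\lesssim\|v_1\|_{H^1}^2\lesssim\eps_0^2$ (from Proposition~\ref{vH1}) to absorb the quartic term.

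For the time decay $\|v_1(t)\|_{L^2_w}\lesssim\eps_0\langle t\rangle^{-m}$, I use a Martel--Merle moving-weight monotonicity. Pick $\psi$ smooth, increasing from $0$ at $-\infty$ to $1$ at $+\infty$, with $\psi'\geq 0$ exponentially localized at scale $1/\delta$ so that $|\psi'''|\leq\delta^2\psi'$, and apply the identity with the translating weight $\rho_t(y)=\psi(y-y_0+\lambda t)$ for some $0<\lambda<\tilde c$. The time dependence of $\rho_t$ produces an extra $\lambda\int\rho_t'v_1^2$, so the effective coefficient of $\int\rho_t'v_1^2$ becomes $\lambda-\tilde c<0$ and the monotonicity is preserved. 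Integrating in time,
\[
\int\psi(y-y_0+\lambda t)\,v_1^2(t,y)\,dy \le \int\psi(y-y_0)\,v_0^2(y)\,dy,
\]
and specialising to $y_0=\lambda t$ makes the left-hand side comparable to $\|v_1(t)\|_{L^2_w}^2$ (for $\psi$ proportional to $w^2$). The right-hand side measures the mass of $v_0$ past the point $\lambda t$, which the polynomial decay of $v_0$ bounds by $(\lambda t)^{-2m}\|\langle y_+\rangle^m v_0\|_{L^2}^2\lesssim\eps_0^2\langle t\rangle^{-2m}$. The uniform polynomial bound $\|\langle y_+\rangle^m v_1(t)\|_{L^2}\lesssim\eps_0$ follows from the same identity with the time-independent weight $\rho_\Lambda(y)=(\Lambda+y_+)^{2m}$ (smoothed near $y=0$): for $\Lambda$ large enough $|\rho_\Lambda'''|/\rho_\Lambda'\lesssim\Lambda^{-2}$, monotonicity gives $\int\rho_\Lambda v_1^2(t)\leq\int\rho_\Lambda v_0^2\lesssim\eps_0^2$, and one concludes since $\rho_\Lambda\gtrsim\langle y_+\rangle^{2m}$.

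The integrated smoothing bound \eqref{propv1conc2} is an automatic by-product of time-integrating the virial identity with $\rho=w^2$: the $3\int\rho'(\partial_y v_1)^2$ term produces $\int_0^\infty\int(w^2)'(\partial_y v_1)^2\,dy\,dt\lesssim\eps_0^2$, and since $(w^2)'\sim(w')^2$ up to constants this controls $\int_0^\infty\|\partial_y v_1\|_{L^2_{w'}}^2\,dt$. The $H^1$-analogues of the weighted bounds, and the $\partial_y^2 v_1$ piece of the smoothing bound, come from applying the same virial strategy to $\partial_y v_1$, which solves a similar equation; the derivation produces two additional nonlinear terms after integration by parts, namely the benign $3\int\rho'v_1^2(\partial_y v_1)^2$ (absorbed into the left-hand side) and the more delicate $\int\rho\,v_1(\partial_y v_1)^3$.

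The main technical point is this last nonlinear contribution in the identity for $\partial_y v_1$, which lacks the $\rho'$ factor needed for direct absorption. The plan is to handle it by a further integration-by-parts manipulation converting it into expressions involving $\rho'$ and $\partial_y^2 v_1$ multiplied by $\rho$, which can then be controlled via the $H^1$-smallness of $v_1$, a weighted Gagliardo--Nirenberg inequality, and the Kato-type smoothing bound just established, closing the argument by Gronwall. Verifying that this scheme closes uniformly in time---in particular that the $\langle t\rangle^{-m}$ decay survives the interaction with the derivative nonlinearity---is the main technical effort of the proof.
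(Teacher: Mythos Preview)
Your $L^2$-level virial argument (translating weight for the $\langle t\rangle^{-m}$ decay, polynomial weight for the uniform bound, $\rho=w^2$ for smoothing) is correct and matches the paper. The gap is in the $H^1$ step. Differentiating \eqref{freemKdV} and running the virial on $\partial_y v_1$ indeed produces the term $\int\rho\,v_1(\partial_y v_1)^3$ you flag, but your proposed cure---one more integration by parts, weighted Gagliardo--Nirenberg, Gronwall---does not close with only $H^1$ control on $v_1$. After integrating by parts you land on $\int\rho\,v_1^2(\partial_y v_1)(\partial_y^2 v_1)$, and any bound for this still calls for either $\|\partial_y v_1\|_{L^\infty}$ or an unweighted $\|\partial_y^2 v_1\|_{L^2}$, neither of which is available; the Kato smoothing you have from the $L^2$ identity only controls $\partial_y v_1$ in $L^2_{w'}$, not $\partial_y^2 v_1$.

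The paper avoids this entirely by not differentiating. It uses instead the local conservation law for the Hamiltonian: with $e_1=\tfrac12(\partial_y v_1)^2-\tfrac14 v_1^4$ and $d_1=\partial_y^2 v_1+v_1^3$, equation \eqref{freemKdV} gives
\[
\partial_t e_1-\tilde c\,\partial_y e_1-d_1\partial_y d_1=-\partial_y\bigl(\partial_y d_1\cdot\partial_y v_1\bigr).
\]
Integrating against $\phi_k$, the flux term $d_1\partial_y d_1$ yields the coercive $\tfrac12\int\phi_k'\,d_1^2$, which controls $\int\phi_k'(\partial_y^2 v_1)^2$ up to lower order. The remaining term $\int\phi_k'\,\partial_y d_1\,\partial_y v_1$ is handled by one more integration by parts together with the substitution $\partial_y^2 v_1=d_1-v_1^3$: every resulting piece carries a factor $\phi_k'$ (or $\phi_k''\lesssim\delta\phi_k'$) and either $\delta$ or $\|v_1\|_{L^\infty}^2\lesssim\eps_0^2$, so it is absorbed. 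Adding this to the $L^2$ identity produces a single monotonicity inequality for $\int\phi_k\bigl(e_1+\tfrac12 v_1^2\bigr)$ with dissipation $\int\phi_k'\bigl(|d_1|^2+|\partial_y v_1|^2+|v_1|^2\bigr)$, and all three conclusions of the proposition follow from the same parameter choices you describe. The point is structural: packaging the nonlinearity into $d_1$ prevents the cubic-in-$\partial_y v_1$ interaction from ever appearing.
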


\begin{proof}
We first notice that on $[0, T]$, we have by assumption that $ |c(t) - c_{0}| \leq \tilde \eps_{1}$ and also by using Proposition
\ref{propshift} that $ |\dot h| \lesssim \tilde  \eps_{1}$,
consequently, by  assuming that  $\tilde \eps_{1}$ is sufficiently small, we can always ensure that
\beq
\label{tildecbelow}
c_{0}/2 \leq  \tilde c (t) \leq  2  c_{0}, \quad \forall t \in [0, T].
\eeq

We shall use weights
\beq
\label{defpoids}
\phi_{k}(t,y) 
  := \chi_{k, \delta} ( y + \sigma t + x_{0})
\eeq
with $\sigma$, $0 \leq \sigma  < c_{0}/2$, $x_{0} \in \mathbb{R}$, $\delta$ sufficiently small, and $\chi_{k,\delta}$ is given by
\begin{align*}
\chi_{k, \delta}(y)= \big( A_{k} + (\delta y)^{2} \big)^k \big(1 + \tanh(\delta y) \big)
\end{align*}
We choose $A_k$ sufficiently big, so that the following  inequalities hold:
$$
\chi_{k,\delta} \sim w^2 \langle y \rangle^{2k} , \quad  \chi_{k,\delta}' \geq 0, \quad  \chi_{k,\delta}'' \lesssim \delta  \chi_{k,\delta}', \quad  \chi_{k,\delta}''' \lesssim \delta^2  \chi_{k,\delta}'.
$$
From  \eqref{freemKdV}, we first obtain
$$
\frac{d}{dt } { 1 \over 2} \int_{\mathbb{R}} \phi_k |v_{1}|^2
  + \frac{1}{2} (\tilde c- \sigma) \int \phi_k' |v_{1}|^2 + \frac{3}{2} \int \phi_k'  |\partial_{y} v_{1}|^2
  = \frac{1}{2} \int \phi_k'''  |v_{1}|^2 + \frac{1}{4}  \int \phi_k' |v_{1}|^4.
$$
Next, we observe that $|\phi_k'''| \lesssim \delta^2 \phi_k'$ and that
$ \|v_{1} \|_{L^\infty} \lesssim \|v_{1} \|_{H^1} \lesssim \eps_{0}$ thanks to Proposition \ref{vH1}.
We thus obtain that
\beq
\label{idviriel}
{d \over dt } { 1 \over 2} \int_{\mathbb{R}} \phi_k |v_{1}|^2
  + ( { 1 \over 2 } (\tilde c - \sigma)- C \eps_{0}^2 - C \delta^2) \int \phi_k' |v_{1}|^2  + { 3 \over 2} \int \phi_k'  |\partial_{y} v_{1}|^2 \leq 0.
\eeq

By setting
$$ e_{1}= { 1 \over 2}  |\partial_{y} v_{1}|^2  -   { 1 \over 4 } |v_{1}|^4, \quad d_{1}=  \partial_{y}^2 v_{1}  +  v_{1}^3, $$
we also  get from \eqref{freemKdV} that
$$ \partial_{t} e_{1}  - \tilde c  \partial_{y} e_{1}  -  d_{1} \partial_{y} d_{1}= -\partial_{y} (\partial_{y} d_{1} \partial_{y} v_{1}).$$
Note that this is the infinitesimal conservation law corresponding to the conservation of the Hamiltonian.
By integrating this identity against the weight $\phi_k$, we obtain after some integration by parts that
\begin{align*}
{d \over dt} \int \phi_k e_{1} +  ( \tilde{c} - \sigma) \int  \phi_k' e_{1} +  { 1\over 2} \int \phi_k' |d_{1}|^2
  =  \int \phi_k' \partial_{y} d_{1} \partial_{y} v_{1}.
\end{align*}
To control the last integral we can integrate by parts and use Proposition \ref{vH1}  and  $|\phi_k''| \lesssim \delta \phi_k'$ to get
$$ \int \phi_k' \partial_{y} d_{1} \partial_{y} v_{1}  + \int \phi_k' |d_{1}|^2
  \lesssim  \delta  \int \phi_k'( |d_{1}|^2  + | \partial_{y}v_{1}|^2)+  \eps_{0} \int \phi_k' (|d_1|^2 + |v_1|^2).
$$
We thus get that
$$ {d \over dt} \int \phi_k e_{1} +  ( \tilde c - \sigma) \int  \phi_k' e_{1} +  ({ 3\over 2} - C \delta  -  C\eps_{0}) \int \phi_k' |d_{1}|^2
  \lesssim  \delta  \int \phi_k' | \partial_{y} v_{1}|^2  +  \eps_{0} \int \phi_k' |v_{1}|^2.$$

By combining the last identity and \eqref{idviriel}, we thus obtain that
\begin{align}
\label{virielfinal}
\begin{split}
{d \over dt} \int \phi_k (e_{1}   + { 1 \over 2}  |v_{1}|^2)
  & + ( \tilde c  - \sigma - C \eps_{0}- C \delta) \int  \phi_k' (e_{1}  + {1 \over 2} |v_{1}|^2)
  \\
  & + ({3 \over 2} - C \delta - C \eps_{0})\int \phi_k' (|d_{1}|^2 + |\partial_{y} v_{1}|^2) \leq 0 .
\end{split}
\end{align}
Note that for $\eps_{0}$ sufficiently small, $e_{1} + { 1 \over 2}  |v_{1}|^2$ and $|d_{1}|^2 + | \partial_{y} v_{1}|^2 + |v_1|^2$ are positive quantities that control pointwise $| \partial_{y} v_{1}|^2 + |v_{1}|^2 $ and $| \partial_{y}^2 v_{1}|^2 + | \partial_{y} v_{1}|^2 + |v_{1}|^2$ respectively.

By using this identity with $k=0$,  $\sigma = 0$,  $x_{0}= 0$, we obtain  after integration in time  that
\beq
\label{viriel1}
\int_{0}^t  \int_{\mathbb{R}}(w')^2 (| \partial_{yy} v_{1}|^2 + |\partial_{y} v_{1}|^2 +   |v_{1}|^2 ) \, dy  dt \lesssim \|v_{0}\|_{H^1}^2.
\eeq

By taking $k=0$,  $\sigma >0$, small and $x_{0}=  - \sigma \tau$, we also get by integrating between $0$ and $\tau$  that for every $\tau>0$,
$$  \| v_{1}(\tau) \|_{H^1_{w}}^2 \lesssim  \int_{\mathbb{R}} \phi_0( y - \sigma \tau)( |v_{0}|^2  + | \partial_{y} v_{0}|^2 )\, dy.$$
Since  $ \phi_0( y - \sigma \tau)/ \langle y_{+} \rangle^{2m} \lesssim  1 / \langle \tau \rangle^{2m}$, we also obtain that
\beq
\label{viriel2}
\| v_{1}(\tau) \|_{H^1_{w}}  \lesssim  { 1 \over  \langle \tau \rangle^m} \| \langle y_{+} \rangle^m  v_{0}\|_{H^1}, \quad \forall \,\,\tau \in [0, T].
\eeq

Finally, by using \eqref{virielfinal} with $ \sigma = 0$ and $x_{0}=0$ but for  $k=m$, we get that
$$ \int_{\mathbb{R}}  \phi_{m}  \left(| \partial_{y} v_{1}|^2 + | v_{1}|^2 \right)  (t) \lesssim \| \langle y_{+} \rangle^m v_{0}\|_{H^1}^2, \quad \forall \,\, t \in [0, T].$$
Since  $\phi_{m}$ behaves like $y^{2m}$ for $y \geq 0$, we get, using also Proposition \ref{vH1}, that
$$ \| \langle y_{+} \rangle^m  v_{1}(t)\|_{H^1} \lesssim \eps_{0}.$$
This ends the proof of the proposition.
\end{proof}

\subsubsection*{Step 6: Estimate of $v_{2}$ }
We now estimate $v_{2}$ mainly using the semi-group estimates of Theorem \ref{theoPego}.

\begin{proposition}
\label{propv2}
For all $t \in [0,T]$ we have the estimates
\begin{align}
\label{propv2conc}
\langle t \rangle^m \| v_{2} (t)\|_{H^1_{a}} \lesssim \eps_{0},
  \qquad
  \int_{0}^t \| v_{2}(\tau) \|_{H^2_{a}}^2 \lesssim \eps_{0}^2.
\end{align}
\end{proposition}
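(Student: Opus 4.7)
The plan is to run a Duhamel/semigroup argument for $v_2$ based on Theorem \ref{theoPego}. Rewriting \eqref{KdVexp} in the form
\begin{align*}
\partial_t v_2 = \mathcal{L}_{c(t)} v_2 + \dot h \,\partial_y v_2 + \partial_y \mathcal{N}(v) + e_Q, \qquad v_2(0)=0,
\end{align*}
the decay estimates of Theorem \ref{theoPego} apply only to the $\mathcal{Q}_c$-component. To address this, I exploit the modulation condition $(v,\zeta_c^j) = 0$: since $v = v_1 + v_2$, one has $\mathcal{P}_{c(t)} v_2 = -\mathcal{P}_{c(t)} v_1$, which is immediately controlled in $H^1_a$ by $\|v_1\|_{L^2_w} \lesssim \epsilon_0 \langle t \rangle^{-m}$ using the exponential localization of the adjoint functions $\zeta_c^j$. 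It therefore suffices to estimate $\tilde v_2 := \mathcal{Q}_{c(t)} v_2$; because $\mathcal{L}_c$ commutes with $\mathcal{Q}_c$ and $e_Q = \dot c\, \xi_c^2 + \dot h\, \xi_c^1$ lies in the generalized kernel (so $\mathcal{Q}_c e_Q = 0$), commuting $\mathcal{Q}_{c(t)}$ through the equation yields
\begin{align*}
\partial_t \tilde v_2 - \mathcal{L}_{c(t)} \tilde v_2 = \mathcal{Q}_{c(t)} \partial_y \mathcal{N}(v) + \dot h\, \mathcal{Q}_{c(t)} \partial_y v_2 + \dot c\, (\partial_c \mathcal{Q}_c) v_2,
\end{align*}
and the last two terms are harmless thanks to the sharp modulation bound $|\dot c| + |\dot h| \lesssim \langle t \rangle^{-2m} M(t)^2$ of Proposition \ref{propshift}.

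The key source term analysis is the following: expanding
\begin{align*}
\mathcal{N}(v) = -3 Q_c^2 v_1 - 3 Q_c v_1^2 - 6 Q_c v_1 v_2 - 3 Q_c v_2^2 - 3 v_1^2 v_2 - 3 v_1 v_2^2 - v_2^3,
\end{align*}
every summand carries either a factor $Q_c$ (whose exponential decay, compatible with the constraint $a < \sqrt{c/3}$ in \eqref{achoix} via \eqref{lienpoids}, absorbs the weight $e^{ay}$ as well as any $1/w$) or a factor $v_2$ (carrying its own $e^{ay}$). Combining Propositions \ref{vH1} and \ref{propv1} with the bootstrap \eqref{hypbootstrapsol}, I expect to obtain
\begin{align*}
\|\mathcal{N}(v)(s)\|_{L^2_a} \lesssim \big(\epsilon_0 + \tilde\epsilon_1\big)\langle s \rangle^{-m} M(s), \qquad \|e_Q(s)\|_{H^1_a} \lesssim \langle s \rangle^{-2m} M(s)^2,
\end{align*}
the dominant linear piece being $Q_c^2 v_1$, for which $\|Q_c^2 v_1\|_{L^2_a} \lesssim \|v_1\|_{L^2_w} \lesssim \epsilon_0 \langle s \rangle^{-m}$ thanks to \eqref{lienpoids} and Proposition \ref{propv1}.

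Using the Duhamel formula with a frozen reference speed (say $c(t)$, with the $\dot c$ variation of the semigroup treated as a small commutator) and the two estimates of Theorem \ref{theoPego}, in particular
$\|S_c(\tau)\mathcal{Q}_c \partial_y v\|_{L^2_a} \lesssim e^{-b\tau}\max(1,\tau^{-1/2})\|v\|_{L^2_a}$,
I bound
\begin{align*}
\|\tilde v_2(t)\|_{H^1_a} \lesssim \int_0^t e^{-b(t-s)}\max\bigl(1,(t-s)^{-1/2}\bigr)\bigl[(\epsilon_0+\tilde\epsilon_1)\langle s\rangle^{-m}M(s) + \langle s \rangle^{-2m} M(s)^2\bigr] ds.
\end{align*}
Splitting at $s=t/2$ (exponential decay kills the early interval, while $\langle s \rangle^{-m} \lesssim \langle t \rangle^{-m}$ on the late interval and $\int_{t/2}^t e^{-b(t-s)}\max(1,(t-s)^{-1/2})\,ds$ is $O(1)$), this closes as $\|v_2(t)\|_{H^1_a} \lesssim \epsilon_0 \langle t \rangle^{-m}$, improving the bootstrap. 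For the space-time estimate $\int_0^t \|v_2(s)\|_{H^2_a}^2\,ds \lesssim \epsilon_0^2$, I would run a localized virial identity for \eqref{KdVexp}, in the spirit of \eqref{virielfinal} and \eqref{viriel1}, against an exponential weight $\phi(y) \sim e^{2ay}$ cut off at $-\infty$; the coercivity term $\frac{3}{2}\int \phi'|\partial_y^2 v_2|^2$ (coming from the third order derivative, with $\phi'$ comparable to $\phi$) absorbs everything, and the right-hand side is handled via the $H^1_a$ pointwise bound just proved together with the decay of $|\dot c|+|\dot h|$.

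The main obstacle is the consistency of the decomposition: one must simultaneously ensure that the orthogonal projection commutes well with the time-evolution (handling $\dot c\,\partial_c\mathcal{Q}_c$ and $\dot h\,\partial_y$ corrections) and that the leading source $Q_c^2 v_1$ truly decays at rate $\epsilon_0\langle s\rangle^{-m}$ rather than $\tilde\epsilon_1\langle s\rangle^{-m}$ — it is precisely this sharper $\epsilon_0$-gain from Proposition \ref{propv1} (rather than the weaker bootstrap $M(t)\le \tilde\epsilon_1$) that lets the bootstrap close with the required $\epsilon_0^{1/2}$-improvement.
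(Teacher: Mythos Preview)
Your approach is correct and close in spirit to the paper's, with two organizational differences worth noting. For the pointwise $H^1_a$ bound both arguments run Duhamel against the Pego--Weinstein semigroup and control the kernel component through the modulation constraint; the paper, however, freezes the reference speed at $c_0$ from the start, rewriting \eqref{KdVexp} as $\partial_t v_2 + \mathcal{L}_{c_0} v_2 = \partial_y\mathcal{N}(v) + e_Q + \partial_y\mathfrak{e}_Q$ with $\mathfrak{e}_Q = -3(Q_{c(t)}^2-Q_{c_0}^2)v_2 + (\tilde c - c_0)v_2$, which avoids your time-dependent projection and the $\dot c\,\partial_c\mathcal{Q}_c$ commutators (both formulations produce errors of the same size, so this is purely cosmetic). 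For the space-time $L^2_t H^2_a$ estimate the paper does \emph{not} introduce a separate virial identity: it simply reuses the Duhamel formula at the $H^2$ level and applies Young's inequality in time, reducing the matter to $\int_0^t\|\mathcal{N}(v)\|_{H^2_a}^2\,d\tau$, which (through the linear piece $Q_c^2 v_1$) is then controlled by the bound $\int_0^t\|v_1\|_{H^2_{w'}}^2\lesssim\eps_0^2$ already secured in Proposition~\ref{propv1}. Your proposed exponential-weight energy identity should also close once $\|v_2\|_{H^1_a}\lesssim\eps_0\langle t\rangle^{-m}$ is in hand---the soliton potential $3\partial_y(Q_c^2 v_2)$ contributes only lower-order terms bounded by $\|v_2\|_{H^1_a}^2$, which are now integrable, and the dissipation from $\partial_y^3$ gives the $H^2_a$ control---with the minor bonus that it only needs $\|\mathcal{N}(v)\|_{H^1_a}\in L^2_t$ (which follows directly from \eqref{Nv1}) rather than the $H^2$ version.
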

\begin{proof}
We can first write the equation \eqref{KdVexp} for $v_{2}$ as
$$
\partial_{t} v_{2} + \mathcal{L}_{c_{0}} v_{2} = \partial_{y} \mathcal{N}(v) + e_{Q}  + \partial_{y} \mathfrak{e}_{Q},
$$
where $e_{Q}$ and $\mathcal{N}$ are defined in \eqref{eQ}, \eqref{defNdev} and $\mathfrak{e}_{Q}$ is given by
\beq
\label{tildeeQ}
\mathfrak{e}_Q =  - 3  (Q_{c(t)}^2 - Q_{c_{0}}^2) v_{2} + (\tilde{c} - c_0) v_2.
\eeq


By using the semi-group $S_{c_{0}}$ we get   that $v_{2}$ is given by the following Duhamel formula
\beq
\label{duhamelv2}
v_{2}(t) = \int_{0}^t S_{c_{0}}(t- \tau) \left(  \partial_{y} \mathcal{N}(v) + e_{Q}  + \partial_{y} \mathfrak e_{Q}\right)(\tau)\, d\tau.
\eeq

We shall first estimate $\mathcal P_{c_{0}} v(t)$.
By using the definition of $\mathcal P_{c}$ and the fact that $v_{2}$ satisfies the constraint \eqref{contrainte2}, we get that
$$  \| \mathcal{P}_{c_{0}} v(t)\|_{H^2_{a}} \lesssim   \|v_{1}\|_{L^2_{w}}
  + |c(t) -c_{0}|   \| v_{2}\|_{L^2_{a}}.$$
Since
$$  \| \mathcal{P}_{c_{0}} v_{2}(t)\|_{H^2_{a}}  \leq \| \mathcal{P}_{c_{0}} v_{1}(t)\|_{H^2_{a}}
  + \| \mathcal{P}_{c_{0}} v(t)\|_{H^2_{a}}
\lesssim  \|v_{1}\|_{L^2_{w}} +   \| \mathcal{P}_{c_{0}} v(t)\|_{H^2_{a}},
$$
by using  Proposition \ref{propv1} and \eqref{estshift}, we get
\beq
\label{estPv2}
\langle t \rangle^m \|  \mathcal{P}_{c_{0}} v_{2}(t)\|_{H^2_{a}} \lesssim  \eps_{0} +  M(t)^3.
\eeq
 This also yields (since $m>1/2$)
 \beq
 \label{estPv2bis}
 \left( \int_{0}^t  \|  \mathcal{P}_{c_{0}} v_{2}(t)\|_{H^2_{a}} ^2 \right)^{1 \over 2}
  \lesssim \eps_{0} +  M(t)^3 .
\eeq

Next, we apply $\mathcal{Q}_{c_{0}}$ to \eqref{duhamelv2}:
$$
\mathcal Q_{c_{0}}v_{2}(t) = \int_{0}^t S_{c_{0}}(t- \tau) \mathcal Q_{c_{0}}
  \left(  \partial_{y} \mathcal{N}(v) + e_{Q}  + \partial_{y} \mathfrak e_{Q}\right)(\tau)\, d\tau.
$$
Thanks to Theorem \ref{theoPego}, we obtain
\beq
\label{duhamel1}
\| \mathcal Q_{c_{0}} v_{2}(t) \|_{H^s_{a}} \lesssim \int_{0}^t  e^{-b(t- \tau)}
  \max\big( 1, (t - \tau)^{-1/2} \big)(\|e_{Q}\|_{H^2_{a}} + \|  \mathfrak{e}_{Q}\|_{H^s_{a}} +  \|\mathcal N (v)\|_{H^s_{a}})\, d\tau, \quad s= 1, \, 2.
\eeq
To estimate the right hand side above, we recall the definition \eqref{eQ} and observe that
$$
\|e_{Q}(t)\|_{H^2_{a}} \lesssim | \dot c | + | \dot h |.
$$
Therefore, using Proposition \ref{propshift}, we obtain that on $[0,T]$
\beq
\label{dotcdoth}
\|e_{Q}(t)\|_{H^2_{a}} \lesssim   \langle t \rangle^{-2m} M(t)^2.
\eeq
Next, we observe that
\beq
\label{esttildeeQ}
  \| \mathfrak e_{Q} (t) \|_{H^1_{a}} \lesssim (| c(t) - c_{0}|+|\dot h(t)|)  \|v_{2}(t) \|_{H^1_{a}}
    \lesssim  {\langle t \rangle}^{-m} M(t)^3,
    \eeq
and in a similar way, we obtain
\beq
\label{esttildeeQbis}
    \int_{0}^t \| \mathfrak e_{Q} (t) \|_{H^2_{a}}^2 \lesssim \tilde \eps_{1}^2 \int_{0}^t \|v_{2} \|_{H^2_{a}}^2.
\eeq
To estimate $\mathcal{N}(v)$, we recall its definition in \eqref{defNdev}, and write
\beq
\label{Nbisforme}
\mathcal{N}(v)= - \left(  3 Q_{c(t)}^2 v_{1} + 3Q_{c(t)} v_{1}^2 + 3 Q_{c(t)} v_2^2 + 6Q_{c(t)} v_{1} v_{2}  + 3 v_{1}^2 v_{2}+ 3v_{1} v_{2}^2 + v_{2}^3 \right).
\eeq
Using the localization provided by $Q_{c}$ we get that, on $[0, T]$,
\begin{align*}
\|\mathcal N(v)(t) \|_{H^1_{a}} \lesssim  ( 1 +  \|v_{1}\|_{H^1}) \|v_{1} \|_{H^1_{w}}
  + (\|v_{1} \|_{H^1} + \|v_{2}\|_{H^1})( 1+ \|v_{1}\|_{H^1} +  \|v_{2}\|_{H^1}) \|v_{2}\|_{H^1_{a}}.
\end{align*}
Therefore, by using Proposition \ref{propv1} and Proposition \ref{vH1}, we get that
\beq
\label{Nv1}
\|\mathcal N(v)(t) \|_{H^1_{a}} \lesssim  \eps_{0} {\langle t \rangle}^{-m}
  + \tilde{\eps}_{1} {\langle t \rangle}^{-m} M(t).
\eeq
In a similar way, we obtain
    \begin{equation}
      \label{NvH2}
     \int_{0}^t \| \mathcal N(v) \|_{H^2_{a}}^2 \lesssim    (1 + \tilde\eps_{1}) \int_{0}^t \| v_{1}\|_{H^2_{w}}^2 +  \tilde{\eps}_{1}^2 \int_{0}^t \|v_{2}\|_{H^2_{a}}^2.
    \end{equation}
Putting together \eqref{estPv2}, \eqref{duhamel1}, \eqref{dotcdoth}, \eqref{esttildeeQ}  and \eqref{Nv1},
we see that for all $t \in [0, T]$
$$
\langle t \rangle^m \| v_{2}(t) \|_{H^1_{a}} \lesssim \eps_{0} + \tilde \epsilon_1 M(t)  + (\epsilon_0 +   \tilde{\eps}_{1}M(t))
\int_0^t \langle t \rangle^m e^{- b(t- \tau)} \max \big( 1 , (t - \tau)^{-1/2} \big)  {1 \over  \langle \tau \rangle^m} \, d \tau.
$$
Since
$$  \sup_{t\geq 0}  \int_{0}^t   \langle t \rangle^m e^{- b(t- \tau)} \max \big( 1 , (t - \tau)^{-1/2} \big)
  {1 \over  \langle \tau \rangle^m} \, d \tau <+ \infty, $$
   we obtain, by using again  Proposition \ref{vH1}, that
$$  \langle t \rangle^m \| v_{2}(t) \|_{H^1_{a}}  \lesssim  \eps_{0} + \tilde{\eps}_{1} \sup_{[0, t]} ( \langle s \rangle^m   \| v_{2}(s) \|_{H^1_{a}}).$$
 Taking $\tilde \eps_{1} $ sufficiently small, we get
 \beq
 \label{firstpartoftheproof}
 \langle t \rangle^m \| v_{2}(t) \|_{H^1_{a}}  \lesssim  \eps_{0} , \quad \forall t \in [0, T].
\eeq
 Consequently,  the first part of \eqref{propv2conc} is proven.

To get the second part, we use Young's inequality and  \eqref{estPv2bis},  \eqref{duhamel1} for $s=2$, \eqref{dotcdoth}, \eqref{esttildeeQ},
 \eqref{NvH2} to obtain
$$
\left( \int_{0}^t \| v_{2}(\tau) \|_{H^2_{a}}^2 \, d\tau \right)^{1/2}  \lesssim \eps_{0}  +  M(t) + \Big( \int_{0}^t \| v_{1}\|_{H^2_{w}}^2\, d\tau \Big)^{1 \over 2 }  +
 \tilde{\eps}_{1}\left( \int_{0}^t \| v_{2}(\tau) \|_{H^2_{a}}^2 \, d\tau \right)^{1/2} .
$$
By using Proposition \ref{propv1} and \eqref{firstpartoftheproof}, this yields
$$ \left( \int_{0}^t \| v_{2}(\tau) \|_{H^2_{a}}^2 \, d\tau \right)^{1/2}  \lesssim \eps_{0}  +  \tilde{\eps}_{1}\left( \int_{0}^t \| v_{2}(\tau) \|_{H^2_{a}}^2 \, d\tau \right)^{1/2},$$
 and we conclude again the proof by  choosing $\tilde{\eps}_{1}$ sufficiently small.
\end{proof}

\subsubsection*{Step 7: Conclusion}
 By combining Propositions \ref{propv1} and \ref{propv2}, we have already proven that
  $ M(t) \lesssim \eps_{0}$  on $[0, T]$. From \eqref{estshift}, we also obtain
 that   $ |c(t) - c_{0}| + |h(t) - h_{0}| \leq \eps_{0}$ (actually we even have $\eps_{0}^2$).
  Finally from Proposition \ref{vH1}, we get
  $ \|v(t)\|_{H^1} \lesssim \eps_{0}^{1 \over 2}$ on $[0, T]$.
  Since
$$  \langle t \rangle^m \|v(t) \|_{H^1_{w}} + \| \langle y_{+} \rangle^m v(t)\|_{H^1}
  \lesssim  \langle t \rangle^m\| v_{1}\|_{H^1_{w}} + \langle t \rangle^m\| v_{2}\|_{H^1_{a}}+ \| \langle y_{+} \rangle^m v_{1}\|_{H^1} + \|v_{2} \|_{H^1_{a}} ,$$
 we obtain, by using again  Proposition \ref{propv1}, that
 $$ N(t) \lesssim \eps_{0}^{1 \over 2}, \quad \forall \,\, t \in [0, T].$$
 By taking $\eps_1 \gg \eps_{0}^{1/2}$ and sufficiently small,
we see, by a standard bootstrap argument, that the estimate \eqref{hypbootstrapsol} holds true for all times.

Moreover, from Proposition \ref{propshift}, we have that
$$ |\dot h | + |\dot c| \lesssim \tilde{\eps}_{1}^2\langle t \rangle^{-2m} ,$$
and, since $m>1/2$, we  deduce that there exists $c_{+}$ and $h_{+}$ such that
$$
| c(t) - c_{+}| + |h(t) - h_{+}| \lesssim \tilde{\eps}_{1}^2 \langle t \rangle^{-(2m- 1)}.
$$
This gives \eqref{estsoliton}.
Finally, note that the estimate \eqref{estsoliton2} follows from Proposition \ref{propv1} and  Proposition \ref{propv2}.
$\hfill \Box$


\bigskip
\subsection{Proof of Theorem \ref{maintheo2}}\label{secsoliton2}
We now show how to obtain Theorem \ref{maintheo2} from Theorem \ref{theosoliton2} and the first part of the paper.
We start again  from the decompositions \eqref{usplit} and \eqref{mizudec}, so that $v_{1}(t,y)$, $v_{2}(t,y)$, $c$ and $h$
satisfy the estimates in the  proof of Theorem \ref{theosoliton2}. We  thus write
\beq
\label{udec2}
u(t,x) = Q_{c(t)}(y) +  \tilde v(t,x), \quad  \tilde v(t,x)= \tilde v_{1}(t,x)+ \tilde{v_{2}}(t,x), \quad \tilde v_{i}(t,x)= v_{i}(t,y)
\eeq
where again $ y= y(t,x)= x - \int_{0}^t c  + h(t)$.
By using the estimates in Theorem \ref{theosoliton2}, we already have that
\begin{multline}
\label{estdebase}
\|\langle x_{+} \rangle^m v(t) \|_{H^1} +  \langle t \rangle^m ( \|v_{1}(t) \|_{H^1_{w}} + \|v_{2}(t) \|_{H^1_{a}}) +
  \langle t \rangle^{2 m}(|\dot{c}(t)|  +  | \dot{h}(t)| )
\\
    + \langle t \rangle^{2m-1} \left( |c(t) - c_{+}|
     + |h(t) - h_{+}| \right) \lesssim \tilde{\eps}_{1} , \quad \forall \,\, t \geq 0 .
\end{multline}
We now use the approach of the first part of the paper to estimate $ \tilde v= \tilde v_{1}+ \tilde v_{2}$ behind the solitary wave.

\subsubsection*{Step 1: Estimates for $\tilde{v}_{1}$}

By definition, since $v_{1}(t,y)$ solves \eqref{freemKdV}, we get that $\tilde{v}_{1}(t,x)$ solves the mKdV equation
$$ \partial_{t} \tilde v_{1} + \partial_{x}^3 \tilde v_{1} + \partial_{x} \tilde v_{1}^3 = 0, \quad (\tilde v_{1})_{/t=0}= v_{0}(x).$$
Consequently, $\tilde{v}_{1}$ verifies the estimates of Theorem \ref{maintheo1}.
In particular if we denote $f_{1} :=  e^{t \partial_{x}^3} \tilde v_{1}$ we have
\begin{equation}
\label{estv1fin}
{\| \tilde v_{1} \|}_{X} := \sup_{t} \big( \langle t \rangle^{- \delta} \|  x f_{1} \|_{H^1}
  + \langle t \rangle^{{\alpha \over 3} - { 1 \over 6}} \| | \partial_{x} |^\alpha  x f_{1} \|_{L^2}
  + \| \what{f_{1}} \|_{L^\infty} \big) \lesssim \eps_0 ,
\end{equation}
see the definition of the $X$-norm in \eqref{apriori}.
In particular, this also gives the linear estimate \eqref{lemlinear21} and \eqref{lemlinear22}, the bilinear estimates \eqref{bilinear}
and the trilinear estimate \eqref{oriole} for $\tilde{v}_{1}$:
\begin{equation}
 \label{estv1fin2}
\begin{split}
& \langle t \rangle^{1/3+\beta/3-1/(3p)}  {\| |\partial_x|^\beta \tilde v_{1} \|}_{L^p}  \lesssim \epsilon_0,  
  \qquad \mbox{for} \quad 0\leq \beta <1/2, \quad p(1/4-\beta/2)>1 ,
\\
& \langle t \rangle {\|\tilde v_{1} \partial_{x} \tilde v_{1} \|}_{L^\infty} \lesssim \epsilon_0^2,
\\
& \langle t \rangle^{ {5/6}+{\alpha/3}} {\| | \partial_{x}|^\alpha \tilde v_{1}^3 \|}_{L^2} \lesssim {\eps}_{0}^3,
  \qquad \mbox{for} \quad 0\leq \alpha<1/2.
\end{split}
\end{equation}
Thanks to \eqref{boundISu} we also have\footnote{Note that the second part of this estimate,
  was not explicitly written down, but it is a direct consequence of \eqref{SmKdV} and Gronwall's inequality.}
\beq \label{estvefin3}
\| I S \tilde v_{1}\|_{L^2} + \| S \tilde v_{1}\|_{L^2}\lesssim \eps_{0} t^{ C \epsilon_0^2}.
\eeq

For later use, we improve these latter estimates in front of the solitary wave:
\begin{lemma} \label{lemtildev1}
Let us set $ H_{1}(t,y) =( I S \tilde v_{1}) (t, x)$, again with $ y= x - \int_{0}^t c(s) \, ds + h(t)$.
Then we have the estimates
\begin{equation*}
 \| H_{1}(t) \|_{L^2_{w}} \lesssim { \eps_{0} \over
  \langle t \rangle^{m-1 - { 1 \over 2} C \epsilon_0^2}}
  \, , \qquad
  \int_{0}^t \| H_{1}(\tau) \|_{H^1_{w'}}^2 \, d\tau \lesssim \eps_{0}^2.
\end{equation*}
\end{lemma}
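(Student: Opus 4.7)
The plan is to rewrite $H_{1}$ as the solution of a linear KdV-type equation in the moving coordinate and then adapt the virial-type weighted energy estimate of Proposition \ref{propv1}. Since $\tilde v_{1}$ solves \eqref{mKdVbis}, the same manipulation that led to \eqref{SmKdV} applied to $\tilde v_{1}$, followed by the antiderivative $I$, produces
$$\partial_{t}(IS\tilde v_{1}) + \partial_{x}^3(IS\tilde v_{1}) + 3\tilde v_{1}^2\,\partial_{x}(IS\tilde v_{1}) = 0.$$
Changing to the moving frame $y = x - \int_{0}^t c + h(t)$, so that $\partial_{t}|_{y} = \partial_{t}|_{x} + \tilde c\,\partial_{x}$ and $\partial_{y} = \partial_{x}$, this becomes
$$\partial_{t}H_{1} - \tilde c\,\partial_{y}H_{1} + \partial_{y}^3 H_{1} + 3 v_{1}^2\,\partial_{y}H_{1} = 0,\qquad H_{1}(0,y) = y v_{0}(y),$$
a linear KdV equation perturbed by a fast-decaying potential $v_{1}^2$.

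The main step is then a weighted energy estimate using the moving weight $\phi(t,y) = \chi_{0,\delta}(y - \sigma(\tau - t))$ from \eqref{defpoids} (with $k=0$, a small $\sigma > 0$ and a fixed terminal time $\tau$). Multiplying by $\phi H_{1}$, integrating by parts, exploiting $|\phi'''| \lesssim \delta^2 \phi'$, and handling the nonlinear term $3v_{1}^2\partial_{y}H_{1}$ by one further integration by parts yields
$$\frac{d}{dt}\int \phi H_{1}^2 + (\tilde c - \sigma - C\delta^2 - C\epsilon_{0}^2)\int \phi' H_{1}^2 + 3\int \phi'(\partial_{y}H_{1})^2 \lesssim \|v_{1}\partial_{y}v_{1}\|_{L^\infty}\!\int \phi H_{1}^2.$$
The key input is that $\tilde v_{1}$ is a global small solution of \eqref{mKdV} (Theorem \ref{maintheo1}) to which the bilinear estimate in \eqref{estv1fin2} applies, giving $\|v_{1}\partial_{y}v_{1}\|_{L^\infty} \lesssim \epsilon_{0}^2/\langle t\rangle$. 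Fixing $\delta,\sigma$ small and using $\tilde c \geq c_{0}/2$ from \eqref{tildecbelow}, Gronwall's inequality produces
$$\int \phi(t,\cdot) H_{1}(t)^2 + \int_{0}^t \!\!\int \phi'(|H_{1}|^2 + |\partial_{y}H_{1}|^2)\,ds \lesssim \langle t\rangle^{C\epsilon_{0}^2}\!\int \chi_{0,\delta}(y - \sigma\tau)\,(y v_{0}(y))^2\,dy.$$

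The hard part is extracting the sharp rate $\tau^{2(1-m)}$ from the initial-data integral above. The contribution of $y \leq \sigma\tau/2$ is exponentially small, thanks to the exponential decay of $\chi_{0,\delta}$ at $-\infty$ and $\|y v_{0}\|_{L^2}\lesssim \epsilon_{0}$. For $y \geq \sigma\tau/2$, since $\chi_{0,\delta} \lesssim 1$, the essential observation is the $L^2$ interpolation
$$\int_{y_{0}}^{\infty} y^2 v_{0}^2\,dy = \int_{y_{0}}^{\infty} y^{2-2m}\,(y^m v_{0})^2\,dy \leq y_{0}^{2-2m}\,\|\langle y_{+}\rangle^m v_{0}\|_{L^2}^2 \lesssim y_{0}^{2-2m}\epsilon_{0}^2,$$
valid for any $m > 1$. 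A naive pointwise bound $|v_{0}(y)| \lesssim \epsilon_{0}\,y^{-m}$ would yield only $y_{0}^{3-2m}$ and fall short by a factor $\tau^{1/2}$, so it is essential to use the $L^2$ weighted norm directly. Taking $y_{0} = \sigma\tau/2$, evaluating at $t = \tau$, and using $\chi_{0,\delta}(y) \sim w(y)^2$ gives $\|H_{1}(\tau)\|_{L^2_{w}}^2 \lesssim \epsilon_{0}^2\,\tau^{2(1-m)+C\epsilon_{0}^2}$, which is the first claim.

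For the integrated $H^1_{w'}$ bound I plan to re-run the same energy estimate with $\sigma = 0$, so that $\phi = \chi_{0,\delta}$ is time-independent and the initial-data integral is simply $\|yv_{0}\|_{L^2}^2 \lesssim \epsilon_{0}^2$. Substituting the already-proved $L^2_{w}$ decay of $H_{1}$ into the right-hand side of Gronwall, the forcing contributes $\int_{1}^{t} s^{-1}\epsilon_{0}^2 \cdot \epsilon_{0}^2 s^{2(1-m)+C\epsilon_{0}^2}\,ds = O(\epsilon_{0}^4)$, which converges since $m > 1$ (the part $s \in [0,1]$ being absorbed by standard local energy bounds). Since $\chi_{0,\delta}'(y) \sim w'(y)^2$, the dissipative side $\int_{0}^t \!\int \phi'(H_{1}^2 + (\partial_{y}H_{1})^2)\,ds$ upgrades directly to $\int_{0}^t \|H_{1}(s)\|_{H^1_{w'}}^2\,ds \lesssim \epsilon_{0}^2$.
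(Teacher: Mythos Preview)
Your proof is correct and follows essentially the same approach as the paper: derive the linear equation for $H_1$ in the moving frame, run the virial computation with the shifted weight $\chi_{0,\delta}(y-\sigma(\tau-t))$, apply Gronwall with the bilinear bound $\|v_1\partial_y v_1\|_{L^\infty}\lesssim \eps_0^2/\langle t\rangle$, and then extract the $\tau^{2(1-m)}$ decay from the initial-data integral exactly as you do; the second estimate likewise comes from the $\sigma=0$ version together with the already-established $L^2_w$ decay. One harmless imprecision: only $(w')^2 \lesssim \chi_{0,\delta}'$ holds (not the reverse), but that is the direction you need.
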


\begin{proof}
We observe that since $v_1$ solves the free equation \eqref{freemKdV}, and $S$ commutes with the equation as in \eqref{SmKdV},
then
$$
\partial_{t} H_{1} - \tilde{c} \partial_{y} H_{1}  + \partial_{y}^3 H_{1} + 3 v_{1}^2 \partial_{y} H_{1} = 0 .
$$
Then we can use a virial type computation similar to \eqref{idviriel}, with the same $\phi_{k}$ defined in \eqref{defpoids}, to find
\begin{align}
\label{virielI}
\begin{split}
{ d\over dt } { 1 \over 2 } \int_{\mathbb{R}} \phi_{k} H_{1}^2 \, dy  +  { 1 \over 2 }(\tilde c- \sigma - C \delta^2 - C \epsilon_0^2)
  \int_{\mathbb{R}}\phi_{k}'  H_{1}^2 \, dy 
  + { 3 \over 2} \int_{\mathbb{R}} \phi_{k}'|\partial_{y} H_{1} |^2 \, dy 
\\ = 3 \int_{\mathbb{R}} |v_{1} \partial_{y}v_{1} | \phi_{k} H_{1}^2 \, dy 
\lesssim \epsilon_0^2  { 1 \over \langle t \rangle} \int_{\mathbb{R}} \phi_{k} H_{1}^2 \, dy ,
\end{split}
\end{align}
where we have used \eqref{estv1fin2} to obtain the above inequality.
Note that for $\eps_0$ sufficiently small, we can ensure that
$\tilde c- \sigma - C \delta^2 - C  \eps_{0}^2 \geq  { c_{0} \over 2}$.
Consequently, integrating in time we get
$$ \int_{\mathbb{R}} \phi_{k}(t,y) |H_{1}(t,y)|^2 \, dy  \lesssim
  \langle t \rangle^{C \eps_{0}^2} \int_{\mathbb{R}} \phi_{k}(0) |H_1(0,y)|^2 \, dy.$$
Moreover, by observing that $H_{1}(0,y) = y v_{0}$
and taking the parameters in the weight so that $\sigma>0$,  $x_{0}= - \sigma \tau$,  and $k \leq m-1$,
we get in particular that for every $\tau \geq 0$,
$$ \| \langle x_{+} \rangle^{k} H_{1}(\tau) \|_{L^2_{w}}^2 \lesssim { \eps_{0}^2 \langle \tau \rangle^{ C\eps_{0}^2}  \over
  \langle \tau \rangle^{2(m-1)}}.$$
This proves the first part of the estimates by taking $k=0$.

Next, by using \eqref{virielI} with $\sigma = 0$,  $x_{0}= 0$  and $ k= 0$ in the weight, and integrating in time we get, using that $w'^2 \lesssim \chi_{0,\delta}^2$,
$$ \int_{0}^t \| H_{1}\|_{H^1_{w'}}^2 \, ds
  \lesssim \eps_{0}^2
  + \eps_{0}^4  \int_{0}^t  { 1 \over \langle s \rangle^{ 1+  2(m - 1)- C{\eps}_{0}^2}} \, ds
  \lesssim  \eps_{0}^2 + \eps_{0}^4 , $$
for $\eps_{0}$ sufficiently small,since we assumed $m > 3/2$. \end{proof}

\subsubsection*{Step 2: Weighted estimates for $I S \tilde v_{2}$}
In this section we shall use in a crucial way that
\beq \label{miracle}
\partial_{c} Q_{c}= { 1 \over 2 c}  \partial_{y}(yQ_{c}).
\eeq
\begin{lemma}
\label{lemtildev2}
Set $ H_{2}(t,y) =( I S \tilde v_{2}) (t, x)$
again with $ y= x - \int_{0}^t c + h.$ Then
$$
\| H_2 \|_{L^2_a}^2 + \int_0^\infty \| H_2 \|_{H^1_a}^2 \,d\tau \lesssim \tilde \epsilon_1^2.
$$
\end{lemma}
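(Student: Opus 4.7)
The plan is to derive an explicit algebraic identity for $H_2$ from the equation \eqref{KdVexp} for $v_2$ together with the miracle \eqref{miracle}, and then to bound each term of the identity in $L^2_a$ and $H^1_a$ using the estimates of Propositions \ref{vH1}--\ref{propv2} and \ref{propshift}. In the moving frame $y = x - \int_0^t c(s)\,ds + h$, in which $\partial_t|_x = \partial_t|_y - \tilde c\,\partial_y$ and $\partial_x = \partial_y$, one has $S\tilde v_2 = v_2 + (x-3t\tilde c)\partial_y v_2 + 3t\,\partial_t v_2$. Substituting $\partial_t v_2 = \tilde c\partial_y v_2 - 3\partial_y(Q_c^2 v_2) - \partial_y^3 v_2 + \partial_y\mathcal{N}(v) + e_Q$ from \eqref{KdVexp}, taking the antiderivative $I$ from $-\infty$, and using both the integration by parts $I(y\,\partial_y v_2) = yv_2 - Iv_2$ and the miracle \eqref{miracle}, which gives $I\xi_c^1 = Q_c$ and $I\xi_c^2 = \tfrac{1}{2c}\,yQ_c$, one arrives at
\[
H_2 \;=\; x\,v_2 \;-\; 9t\,Q_c^2 v_2 \;-\; 3t\,\partial_y^2 v_2 \;+\; 3t\,\mathcal{N}(v) \;+\; \tfrac{3t\dot c}{2c}\,y Q_c \;+\; 3t\dot h\,Q_c,
\]
which at $t=0$ correctly reduces to $H_2(0) = 0$ because $v_2(0) = 0$.

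To estimate $\|H_2\|_{L^2_a}$, I would bound each term. Splitting $x = y + (\int_0^t c - h)$, the constant-in-$y$ piece contributes $|\int_0^t c - h|\,\|v_2\|_{L^2_a} \lesssim t\,\tilde{\eps}_1\langle t\rangle^{-m} \lesssim \tilde{\eps}_1$ since $m>1$; the $\|y v_2\|_{L^2_a}$ piece is absorbed by passing to a slightly larger weight $a' \in (a,\sqrt{c/3})$ via $|y|e^{ay}\lesssim e^{a' y}$ and applying Proposition \ref{propv2} with $a'$ in place of $a$. The terms $tQ_c^2 v_2$, $t\mathcal{N}(v)$, and $t(\dot c\, yQ_c + \dot h\, Q_c)$ are controlled using Propositions \ref{vH1}--\ref{propv2}, the estimate \eqref{Nv1}, and Proposition \ref{propshift} (which yields $|\dot c| + |\dot h| \lesssim \tilde{\eps}_1^2\langle t\rangle^{-2m}$). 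The delicate term $3t\,\partial_y^2 v_2$ requires a pointwise version of the second estimate in \eqref{propv2conc}, namely $\|v_2(t)\|_{H^2_a}\lesssim \tilde{\eps}_1\langle t\rangle^{-m}$, which I would obtain by iterating the Duhamel argument used in the proof of Proposition \ref{propv2} at one higher level of regularity, with the missing half-derivative supplied by the smoothing part of Theorem \ref{theoPego}.

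For the time-integrated $H^1_a$ bound, differentiating the identity gives $\partial_y H_2 = v_2 + (x - 3t\tilde c)\partial_y v_2 + 3t\,\partial_t v_2$. Substituting once more for $\partial_t v_2$ reduces the analysis to $L^2_a$ norms of $v_2$, $\partial_y v_2$, $\partial_y^2 v_2$, $\partial_y^3 v_2$, $\partial_y\mathcal{N}$, and $e_Q$ weighted by powers of $t$. Square-integrability in time follows from $\int_0^\infty \|v_2\|_{H^2_a}^2\,d\tau \lesssim \tilde{\eps}_1^2$ (Proposition \ref{propv2}) together with the integrability of $t^2\langle t\rangle^{-2m}$ for $m > 3/2$; the top-order contribution $3t\,\partial_y^3 v_2$ is handled by one further iteration of the semigroup-Duhamel argument, yielding $\int_0^\infty t^2\|v_2\|_{H^3_a}^2\,d\tau \lesssim \tilde{\eps}_1^2$.

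The main obstacle I anticipate is the regularity gap between what Proposition \ref{propv2} states (pointwise $H^1_a$, integrated $H^2_a$) and what the identity demands of $v_2$ (pointwise $H^2_a$ and integrated $H^3_a$). Both are obtained by straightforward strengthenings of the Duhamel argument of Proposition \ref{propv2}, upgrading the source/output Sobolev index by one derivative via Theorem \ref{theoPego}, and both rely crucially on the hypothesis $m > 3/2$. Once these refinements are in place, the proof reduces to purely algebraic, term-by-term estimation from the explicit identity, with no further Gronwall or bootstrap argument required.
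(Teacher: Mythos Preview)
Your algebraic identity for $H_2$ is correct, and using it to bound $\|H_2\|_{L^2_a}$ does work essentially as you say (the upgrade of Proposition~\ref{propv2} to a pointwise $H^2_a$ bound on $v_2$ is legitimate, since by \eqref{Nv1} the Duhamel source $\mathcal{N}(v)$ lies in $H^1_a$ with decay $\langle t\rangle^{-m}$). The difficulty is the time–integrated $H^1_a$ bound, where your approach requires
\[
\int_0^\infty t^2\,\|v_2(t)\|_{H^3_a}^2\,dt \;\lesssim\; \tilde\eps_1^2 .
\]
Pushing the Duhamel argument to $H^3_a$ forces the source to lie in $H^2_a$ with a $t$--weight, i.e.\ one needs $\int_0^\infty t^2\,\|\mathcal N(v)\|_{H^2_a}^2\,dt<\infty$. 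But $\mathcal N(v)$ contains $Q_c^2 v_1$, so this demands $\int_0^\infty t^2\,\|v_1\|_{H^2_{\mathrm{loc}}}^2\,dt<\infty$. Proposition~\ref{propv1} gives only $\int_0^\infty\|v_1\|_{H^2_{w'}}^2\,dt$, with no time weight; and since $v_0$ is merely in $H^1$ there is no pointwise $H^2$ control of $v_1$ to fall back on. One can in principle recover such a weighted bound by a virial argument with the shifted polynomial weight $\chi_{m,\delta}(y+\sigma t)$ (this is essentially how Lemma~\ref{lemtildev1} is obtained for $H_1$), but that is an additional, nontrivial ingredient rather than a ``straightforward strengthening'' of the Duhamel step for $v_2$.

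The paper avoids this regularity gap by a different route: it derives an \emph{evolution equation} for $\partial_y H_2 = S\tilde v_2$, namely $\partial_t(\partial_y H_2)+\mathcal L_{c_+}(\partial_y H_2)=F$, and applies the semigroup estimates of Theorem~\ref{theoPego} directly to that equation. The point is that the dangerous $3t\,\partial_y^3 v_2$ never appears explicitly --- it is absorbed into $\mathcal L_{c_+}(\partial_y H_2)$ on the left --- so the source $F$ involves only $S\tilde{\mathcal N}(v)$, $(S\tilde Q_c^2)v_2$, $e_Q$, $\mathfrak e_Q$, etc., all of which are controlled using $v_1,v_2$ at the $H^1$ level together with the already--established Lemma~\ref{lemtildev1} on $H_1$. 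A spectral splitting $\mathcal P_{c_+}/\mathcal Q_{c_+}$ and Young's inequality then give $\int_0^\infty\|\partial_y H_2\|_{L^2_a}^2\,dt\lesssim\tilde\eps_1^2$, after which $\|H_2\|_{L^2_a}$ is obtained by integrating the equation once in $y$ (here the miracle \eqref{miracle} is needed, exactly as you use it, to make $Ie_Q$ exponentially localized) and performing a direct weighted energy estimate. In short, the paper packages the $t$--weights into the vector field $S$ from the start, so that only $H^1$--level information on $v_1,v_2$ is ever required.
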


\begin{proof}
We shall first estimate $ \partial_{y}  H_{2}  (t, y)=  (S \tilde  v_{2}) (t,y)$. Commuting the vector field $S$ with the equation $\partial_t v + \partial_x^3 v = F + \partial_x G$ gives $\partial_t S v + \partial_x^3 Sv = (S+3) F + \partial_x (S+2) G$. Applying this identity to \eqref{KdVexp}, we get that  $\partial_{y} H_{2}$ solves
\begin{multline}
\label{dyH2}
\partial_{t} \partial_{y} H_{2} + \mathcal{L}_{c_{+}} \partial_{y}H_{2}=
  \partial_{y}\big(S ( \tilde{\mathcal{ N}}(v))(t,y) + \mathcal{ N}(v)(t,y) + S \tilde e_{Q}(t,y) + 3 e_{Q}(t,y) \big)
\\
-  \partial_{y} \big( (S \tilde Q_{c}^2)  v_{2}\big) - \partial_{y} (Q_{c}^2 v_{2}) +  \partial_{y} \mathfrak e_{Q}(t,y)
 := F(t,y),
\end{multline}
where we have set
\begin{align}
\nonumber
& \tilde Q_{c}(t,x) = Q_{c}(t,y), \qquad \tilde e_{Q}(t,x)= e_{Q}(t,y),
\\
\label{tildeN}
& \tilde{\mathcal{N}}(v)(t,x) =
  \big( - ( \tilde Q_{c(t)} + \tilde v_{1} + \tilde v_{2})^3
  + \tilde Q_{c(t)}^3  + \tilde  v_{1}^3  +  3 \tilde Q_{c(t)}^2  \tilde v_{2} \big)(t,x),
\end{align}
and
$$  \mathfrak e_{Q}(t,y) =   ( \tilde c - c_{+}) \partial_{y} H_{2}  - 3 ( (Q_{c}^2 - Q_{c_{+}}^2)\partial_{y} H_{2}).$$

Let us first estimate
$\mathcal{P}_{c_{+}} \partial_{y} H_{2}= \mathcal P_{c_{+}} (S \tilde v_{2})(t,y)$.
By using the equation \eqref{KdVexp} to compute $\partial_{t} \tilde v_{2}$,
and by putting the space derivatives on the functions $\zeta_{c_{+}}^i$, $\xi_{c_{+}}^i$, we obtain that
\begin{align*}
\| \mathcal{P}_{c_{+}} \partial_{y} H_{2} \|_{H^1_{a}} \lesssim \langle t \rangle \left( \| v_{2}\|_{L^2_{a}}
  + \|v_{1}\|_{L^2_{w}} + |\dot{c}| + |\dot{h}| \right)
\end{align*}
and hence by using \eqref{estdebase}, we obtain
\beq
\label{PcdyH}
\|   \mathcal{P}_{c_{+}} \partial_{y} H_{2} \|_{H^1_{a}} \lesssim { \tilde \eps_{1} \over \langle t \rangle^{m-1} }, \qquad
\|   \mathcal{P}_{c_{+}} \partial_{y} H_{2} \|_{L^2_{t}H^1_{a}} \lesssim \tilde{\eps_{1}}.
\eeq
Note that the second estimate comes from the fact that $m>3/2$.

We can now estimate $\mathcal{Q}_{c_{+}} \partial_{y}H$. Applying $\mathcal{Q}_{c_{+}}$ to the integral formulation of the equation
$ \partial_{y} H_{2}(t)= \int_{0}^t   S_{c_{+}}(t-\tau) F(\tau, y)  \, d \tau$ gives
$$
\mathcal{Q}_{c_{+}}  \partial_{y} H_{2}(t)= \int_{0}^t  S_{c_{+}}(t-\tau) \mathcal{Q}_{c_{+}} F(\tau, y)\, d\tau.
$$
Using the smoothing estimates in Theorem \ref{theoPego}, the estimates \eqref{estdebase}, and Proposition \ref{propshift}, we get
\begin{align}
\label{boundQcH2}
\begin{split}
\| \mathcal{Q}_{c_{+}}  \partial_{y} H_{2}(t) \|_{L^2_{a}} \lesssim \int_{0}^t  \Big( e^{- b( t- \tau)}
  \max \big( 1, (t- \tau)^{-1/2} \big)
\big(  {\| S \tilde{ \mathcal{N}} (v)\|}_{L^2_{a}} + {\| \mathcal{N}( v) \|}_{L^2_{a}}
\\
  +  { \tilde \eps_{1} \over \langle \tau \rangle^{2m-1}} {\| \partial_{y} H_{2}\|}_{L^2_{a}} + { \tilde \eps_{1} \over  \langle \tau \rangle^{m}} \big)
  + e^{- b(t - \tau)} \langle \tau \rangle(  | \ddot{c}(\tau) | + | \ddot{h}(\tau)|  ) \Big)\, d\tau.
\end{split}
\end{align}

Next, we claim that from the definitions of the nonlinearities in \eqref{Nbisforme} and \eqref{tildeN},
and using \eqref{estdebase} and \eqref{estvefin3}, one has
\begin{align}
\label{StildeN1}
& \|  \mathcal{N}(v) \|_{L^2_{a}}  \lesssim \tilde{\eps}_{1} \langle t \rangle^{-m},
\\
\label{StildeN2}
& \|S \tilde{ \mathcal{N}} (v)\|_{L^2_{a}} \lesssim  \tilde \eps_{1} \langle t \rangle^{-(m-1)} + \| \partial_{y}H_{1}\|_{L^2_{w'}}
  +  \tilde \eps_{1} \| \partial_{y} H_{2}\|_{L^2_{a}}.
\end{align}
The first estimate is a direct consequence of \eqref{Nv1}.
Most of the estimates involved in proving \eqref{StildeN2} are straightforward, so we only
give details about one of the most complicated terms:
  $$ \| v_{1} (S \tilde v_{1})(t,y) v_{2} \|_{L^2_{a}} \lesssim  \|v_{1}\|_{L^\infty} \| S \tilde v_{1}\|_{L^2} \|v_{2}\|_{H^1_{a}}
   \lesssim  {\tilde{\eps_{1}}^3 \over  \langle t \rangle^{m - C \eps_{0}^2}}.$$
 We also have to estimate $\ddot c$ and $\ddot h$.  By differentiating in time the equation \eqref{moduleq},  using the equations for $v_{1}$ and
  $v_{2}$ to express $\partial_{t} v_{1}$ and $\partial_{t} v_{2}$ and always putting the space derivatives on
   $\zeta_{c}^{i}$ in the scalar products using integration by parts, we obtain by using \eqref{estdebase} that
   \beq
  \nonumber | \ddot  h (t) | + |\ddot c (t) | \lesssim  \tilde  {{\eps}_{1} \over \langle t \rangle^m} + { \tilde \eps_{1} \over \langle t \rangle^m} \| \partial_{yy}v \|_{L^2_{w'}} .
   \eeq
   Note that the last term comes from the estimate of  cubic nonlinear term that yields, after integration by parts, terms of the form  $\int_{\mathbb{R}} v \partial_{v} v \partial_{yy} v \partial_{x} \zeta_{c}^{i}\, dx$, for $i=1, \, 2$.
    Consequently, by using \eqref{estsoliton2}, we obtain that
\beq
 \label{h"c"}  \Big( \int_{0}^t   \langle \tau \rangle^2 ( | \ddot  h (\tau) |^2 + |\ddot c (\tau) |^2 ) \, d\tau \Big)^{ 1 \over 2} \lesssim \tilde{\eps}_{1}.
 \eeq
%

By plugging the above estimates into \eqref{boundQcH2}, we thus get  that
\begin{multline*}
\| \mathcal{Q}_{c_{+}}  \partial_{y} H_{2}(t) \|_{L^2_{a}} \lesssim  \int_{0}^t  e^{- b(t - \tau)} \langle \tau \rangle(  | \ddot{c}(\tau) | + | \ddot{h}(\tau)|) \, d\tau \\  +  \int_{0}^t  \Big( e^{- b( t- \tau)} \max( 1, (t- \tau)^{-1/2} )
  \Big(  {\tilde \eps_{1} \over \langle \tau \rangle^{m-1}} + \| \partial_{y}H_{1}\|_{L^2_{w'}}
  +  \tilde \eps_{1} \| \partial_{y} H_{2}\|_{L^2_{a}} \Big) \, d\tau.
\end{multline*}
     By using Young's inequality,  \eqref{PcdyH} and Lemma \ref{lemtildev1}, we thus get that
     $$ \| \partial_{y} H_{2}\|_{L^2_{t} L^2_{a}} \lesssim \tilde \eps_{1} +  \eps_{0} + \tilde \eps_{1} \| \partial_{y} H_{2} \|_{L^2_{t}L^2_{a}}$$
      and hence for $\tilde \eps_{1}$ sufficiently small that
     \beq
     \label{dyH2pf}
       \| \partial_{y} H_{2}\|_{L^2_{t} L^2_{a}} \lesssim \tilde \eps_{1}.
       \eeq

It remains to estimate $\|H_{2}\|_{L^2_{a}}$.
Since $H_{2}= \int_{- \infty}^y \partial_{y} H_{2}$, integrating in $y$ \eqref{dyH2}, we get
\begin{align*}
\partial_{t} H_{2} -  \tilde c \partial_{y} H_{2} + Q_{c}^2 \partial_{y}H_{2} + \partial_{y}^3 H_{2} & =
  S ( \tilde{\mathcal{ N}}(v))(t,y) +2 \mathcal N(v)(t,y)  \\
& \quad -  \int_{- \infty}^{y} [e_{Q}(t,y^\prime) +3 (S \tilde e_{Q})(t,y^\prime)] \, dy-  \big( S \tilde Q_{c}^2  v_{2}\big) - Q_c^2 v_2 \\
& = G(t,y).
\end{align*}
By a weighted energy estimate, we get that for some $b>0$, we have
$$ { d \over dt } { 1 \over 2} \int_{\mathbb{R}} e^{a y } | H_{2}|^2 \, dy
  + b \| H_{2}\|^2_{H^1_{a}} \lesssim \|G\|_{L^2_{a}} \| H_{2}\|_{L^2_{a}} + \| \partial_{y}H_{2}\|_{L^2_{a}} \|H_{2}\|_{L^2_{a}}.
$$
Next, we use \eqref{miracle} to write
$$
e_{Q}= \frac{\dot{c}}{2c} \partial_{y}(y Q_{c}) + \dot h  \partial_{y} Q_{c},
$$
and see that
$$
\int_{- \infty}^y \big( e_{Q}(t,y^\prime) + (S \tilde e_{Q})(t,y^\prime) \big) \, dy^\prime
$$
is an exponentially decreasing function at $\pm \infty$.
Therefore, thanks to \eqref{estdebase} and \eqref{h"c"}, we obtain
$$
\left\|   \int_{- \infty}^{y} \big( e_{Q}(t,y^\prime) + (S \tilde e_{Q})(t,y^\prime) \big) \, dy^\prime  \right\|_{L^2_{a}}
        \lesssim { \tilde \eps_{1} \over \langle t \rangle^{m-1}} .
$$
Consequently, by using \eqref{StildeN1}, \eqref{StildeN2}, we obtain
$$
{ d \over dt } \int_{\mathbb{R}} e^{a y } | H_{2}|^2 \, dy     + b \| H_{2}\|_{H^1_{a}}^2 \lesssim
  \Big( { \tilde \eps_{1} \over \langle t \rangle^{m-1} } + \| \partial_{y} H_{1} \|_{L^2_{w'}}
 +  \| \partial_{y} H_{2} \|_{L^2_{a}} \Big) \| H_{2}\|_{L^2_{a}}.
$$
By integrating in time and using Young's inequality, we obtain
$$ \| H_{2}(t) \|_{L^2_{a}}^2 +  b \int_{0}^t \|H_{2}\|_{H^1_{a}}^2 \lesssim \tilde \eps_{1}^2 + \int_{0}^t  \big( \| \partial_{y} H_{1} \|_{L^2_{w'}}^2 +  \| \partial_{y} H_{2} \|_{L^2_{a}}^2 \big)\, d\tau.
$$
By using \eqref{dyH2pf} and Lemma \ref{lemtildev1}, we finally get that
$$
\int_{0}^t   \|H_{2}\|_{H^1_{a}}^2 \lesssim \tilde \eps_{1}^2.
$$
This completes the proof of the Lemma.
\end{proof}

\subsubsection*{Step 3: Estimates of $\widetilde{v}$}
Let us recall that $\tilde v (t,x)$ is defined in \eqref{udec2}. We can write the equation for $\tilde v$ under the form
  \beq
  \label{eqtildev}
  \partial_{t} \tilde v + \partial_{x}^3 \tilde v + \partial_{x}( \tilde v^3)=- \partial_{x}\big( ( \tilde Q_{c} + \tilde v)^3 - \tilde Q_{c}^3
   - \tilde v^3\big)  + \dot h  \partial_{x} \tilde Q_{c} +  \dot c \partial_{c}  \tilde Q_{c} = \partial_{x} K.
   \eeq
   where
   \beq
   \label{defK}
   K :=  - ( \tilde Q_{c} + \tilde v)^3 + \tilde Q_{c}^3
   +  \tilde v^3 + \dot h \tilde Q_{c} + \frac{ \dot c}{2c} \,   \widetilde {(y Q_{c})}(t,x).
   \eeq
Above we have used the notation
$\widetilde {(y Q_{c})}(t,x)=  (x- \int_{0}^t c + h) Q_{c}(x- \int_{0}^t c + h)$.
Note that in order to write the right hand side of \eqref{eqtildev} as the derivative of a localized function,
we have used \eqref{miracle}.

We now study the profile $f$ of $\tilde v$ defined by $f=  e^{ t \partial_{x}^3} \tilde v$.
By taking $\eps_{1}$ sufficiently small but such that $ \eps_{0} \ll \tilde \eps_{1} \ll \eps_{1}$, we will prove the following:
\begin{lemma}
\label{lemfinal}
We have the estimate
$$  {\|\tilde  v \|}_X := \sup_t \big( \| \tilde v \|_{H^1}
  + {\langle t \rangle}^{-\frac{1}{6}} {\big\| x f \big\|}_{H^1}
  + \langle t \rangle^{\frac{\alpha}{3}-\frac{1}{6}} {\big\||\partial_x|^\alpha xf \big\|}_{L^2}
  + {\big\| \widehat{f}(\xi) \big\|}_{L^\infty}  \big) \leq \eps_1.
$$
\end{lemma}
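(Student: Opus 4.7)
The plan is to close a bootstrap on the $X$-norm of $\tilde v$: assume $\|\tilde v\|_X \le \eps_1$ on some interval $[0,T]$, and upgrade this to $\|\tilde v\|_X \le C(\eps_0 + \tilde\eps_1 + \eps_1^3)$. Since $\eps_0 \ll \tilde\eps_1 \ll \eps_1$, this improves $\eps_1$ and closes. The $H^1$ bound is immediate from Theorem \ref{theosoliton2}: $\|\tilde v(t)\|_{H^1} = \|v(t)\|_{H^1} \lesssim \eps_0^{1/2}$ by translation invariance.

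For $\|\hat f\|_{L^\infty}$, I rerun the argument of Proposition \ref{prohatf} on the perturbed equation \eqref{eqtildev}, whose profile evolution reads
\[
\partial_t \hat f(t,\xi) = -\frac{i\xi}{2\pi}\iint e^{-it\phi(\xi,\eta,\sigma)}\hat f(\xi-\eta-\sigma)\hat f(\eta)\hat f(\sigma)\,d\eta\,d\sigma + i\xi\, e^{-it\xi^3}\,\hat K(t,\xi).
\]
The trilinear self-interaction produces the modified-profile decomposition \eqref{d_tf=} with $L^1_t$-integrable remainder \eqref{estR}, exactly as in Section \ref{keysec}. The new forcing obeys $\|\xi \hat K(t)\|_{L^\infty_\xi} \le \|\partial_x K(t)\|_{L^1_x}$. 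Splitting $K$ into its soliton--perturbation interaction $-3\tilde Q_c^2 \tilde v - 3\tilde Q_c \tilde v^2$ and its modulation piece $\dot h \tilde Q_c + (\dot c/2c)\widetilde{(yQ_c)}$, the exponential decay of $\tilde Q_c$ combined with the bounds $\|v\|_{H^1_w} \lesssim \tilde\eps_1 \langle t\rangle^{-m}$ and $|\dot c| + |\dot h| \lesssim \tilde\eps_1 \langle t\rangle^{-2m}$ from \eqref{estdebase} yields $\|\partial_x K(t)\|_{L^1_x} \lesssim \tilde\eps_1 \langle t\rangle^{-m}$. Since $m > 3/2$ this is time-integrable, and its contribution to $\|\hat f\|_\infty$ is at most $O(\tilde\eps_1)$.

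For the weighted $L^2$ bounds, split $\tilde v = \tilde v_1 + \tilde v_2$. Since $\tilde v_1$ solves \eqref{freemKdV} with data $v_0$ of size $\eps_0$ in $\|\langle x\rangle\cdot\|_{H^1}$, Theorem \ref{maintheo1} applies and yields \eqref{estv1fin}--\eqref{estvefin3}. For the $\tilde v_2$ contribution, I use the identity $xf = e^{t\partial_x^3}(IS\tilde v) + 3te^{t\partial_x^3}(\tilde v^3 - K)$, derived exactly as in the proof of \eqref{lemee2} but for \eqref{eqtildev}. The cubic piece is $O(\eps_1^3 t^{1/6})$ by the trilinear estimate \eqref{oriole}; the $K$ piece is $O(\tilde\eps_1 t^{1-m}) = O(\tilde\eps_1)$ since $m > 3/2$; and $IS\tilde v = H_1 + H_2$ is controlled by combining the Gronwall bound $\|H_1\|_{L^2} \lesssim \eps_0 t^{C\eps_0^2} \le \eps_0 t^{1/6}$ (from \eqref{boundISu} applied to the pure mKdV equation satisfied by $\tilde v_1$) with the weighted bound on $H_2$ supplied by Lemma \ref{lemtildev2}. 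The $|\partial_x|^\alpha$-weighted variant is obtained by the paraproduct/energy argument of Lemma \ref{lemee} applied to the $S$-commuted form of \eqref{eqtildev}, using the $H^1$-regularity shortcut alluded to in the proof of \eqref{lemee3} (valid here since we control $\|\tilde v\|_{H^1}$).

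The main obstacle is extracting unweighted $L^2$ control of $H_2 = IS\tilde v_2$ from the exponentially weighted $L^2_a$ bound of Lemma \ref{lemtildev2}, which only sees the region $y > 0$ in front of the soliton. What rescues us is that $\tilde v_2(0) = 0$ and every source in its equation is soliton-localized, so no mass of $\tilde v_2$ can escape arbitrarily far behind the soliton faster than the dispersive rate already captured by the $\tilde v_1$ piece of the decomposition; the accumulated left-side loss is absorbed into the $\tilde\eps_1 \ll \eps_1$ budget.
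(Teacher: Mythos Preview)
Your treatment of $\|\tilde v\|_{H^1}$ and $\|\hat f\|_{L^\infty}$ matches the paper. The genuine gap is exactly where you flag it: you cannot extract unweighted $L^2$ control of $H_2 = IS\tilde v_2$ from Lemma~\ref{lemtildev2}, which only gives $\|H_2\|_{L^2_a} = \|e^{ay}H_2\|_{L^2}$. Since $a>0$, this weight vanishes exponentially behind the soliton and says nothing about $\|H_2\|_{L^2}$. Your last paragraph is not an argument: the equation for $v_2$ has localized sources of size $\sim\langle t\rangle^{-m}$, and the linear KdV flow transports mass to the left at arbitrary group velocities, so there is no mechanism forcing $\|H_2\|_{L^2}$ to stay bounded by $\tilde\eps_1$. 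The splitting $IS\tilde v = H_1 + H_2$ simply does not close in unweighted $L^2$.

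The paper avoids this by not splitting at this stage. It runs the unweighted energy estimate directly on $IS\tilde v$, which solves
\[
\partial_t(IS\tilde v) + \partial_x^3(IS\tilde v) + 3\tilde v^2\,\partial_x(IS\tilde v) = (S+2)K.
\]
The key observation is that $K$ and $SK$ are \emph{soliton-localized} (every term carries a factor of $\tilde Q_c$ or $\widetilde{yQ_c}$), so the forcing contribution in the energy inequality is bounded not by $\|(S+2)K\|_{L^2}\|IS\tilde v\|_{L^2}$ but by $\|IS\tilde v\|_{L^2_{w'}}$ times a quantity that is $L^2_t$-bounded by $\tilde\eps_1$ (via \eqref{estdebase} and \eqref{h"c"}). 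The weighted norm $\|IS\tilde v\|_{L^2_{w'}}$ is itself $L^2_t$-bounded by $\tilde\eps_1$ thanks to Lemmas~\ref{lemtildev1} and~\ref{lemtildev2} combined as in \eqref{weightedtildev}; this is precisely what those two lemmas are for. Gronwall then yields $\|IS\tilde v(t)\|_{L^2}^2 \lesssim \langle t\rangle^{C\eps_1^2}(\eps_0^2+\tilde\eps_1^2)$, and the identity $xf = e^{t\partial_x^3}IS\tilde v + 3te^{t\partial_x^3}(\tilde v^3 - K)$ finishes the $\|xf\|_{L^2}$ bound. The same localization trick gives $\|S\tilde v\|_{L^2}$, after which the $|\partial_x|^\alpha xf$ estimate follows from $\||\partial_x|^\alpha(xf)\|_{L^2}\lesssim \|IS\tilde v\|_{H^1}+t\||\partial_x|^\alpha\tilde v^3\|_{L^2}+t\|K\|_{H^1}$.
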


\begin{proof}
We follow the same steps as in the first part of the paper.
Note that the norm $\| \tilde v \|_{H^1}$ is already estimated in view of \eqref{estdebase}.
Moreover, by combining Lemma \ref{lemtildev1} and Lemma \ref{lemtildev2} we have that
\beq
\label{weightedtildev}
\int_{0}^t \|   (I S \tilde v)(\tau, y) \|_{H^1_{w'}}^2 \, d\tau \lesssim  \tilde \eps_{1}^2.
\eeq

Still following the steps of the first  part of the paper, we shall first estimate $ \| x f \|_{H^1}$, using again $I S \tilde v$.
We note that $I S \tilde v$ solves
  $$ \partial_{t} I S \tilde v  + \partial_{x}^3I S \tilde v + 3 \tilde v^2 \partial_{x} I S \tilde v=
   (S+2)K$$
    An energy estimate yields
  \beq
  \label{energiefinale1} {d \over dt } {1 \over 2 } \| I S \tilde v \|_{L^2}^2
   \lesssim { \eps_{1}^2 \over t} \| I S \tilde  v \|_{L^2}^2 + \int_{\mathbb{R}} \left( |S K| + |K| \right) | I S \tilde v| \, dx.
    \eeq
By using \eqref{estdebase}, we obtain
$$  \int_{\mathbb{R}} \left( |S K| + |K| \right) | I S \tilde v| \lesssim \| I S \tilde v (t,y) \|_{L^2_{w'}} \left(
{ \tilde \eps_{1} \over \langle  t \rangle^{m-1}}  + \langle t \rangle( |\ddot{c}(t)|  +  |\ddot{h}(t)|) +  \| S  \tilde v  (t,y)\|_{L^2_{w'}}\right)
$$
and thus by \eqref{weightedtildev}  and \eqref{h"c"}, we obtain
$$
\int_{0}^t  \int_{\mathbb{R}} \left( |S K| + |K| \right) | I S \tilde v (t,y)| \,dy\,dt \lesssim  \tilde \eps_{1}^2.
$$
Consequently, by integrating \eqref{energiefinale1} in time, we obtain
\beq
\label{energiefinale2}
\| I S \tilde v (t)  \|_{L^2}^2 \lesssim \langle t \rangle^{ C  \eps_{1}^2}  \left(  \eps_{0}^2 + \tilde \eps_{1}^2 \right).
\eeq
Next, we observe that
$$ x f =  e^{t \partial_{x}^3} I S \tilde v - 3 t  I \partial_{t} f=   e^{t \partial_{x}^3} I S \tilde v
  -  3 t e^{t \partial_{x}^3}\big(- \tilde v^3  + K\big).$$
Using \eqref{oriole}  we get
$$
t  \| \tilde v^3 \|_{L^2} \lesssim  { \eps_{1}^3 \langle t \rangle^{1 \over 6}},
$$
and thanks to \eqref{estdebase} we obtain
$$
t \| K \|_{L^2} \lesssim {\tilde \eps_{1} \over \langle t \rangle^{m-1} } .
$$
Combining these estimates gives
$$
\| x f \|_{L^2} \lesssim \tilde \eps_{1} +  \eps_{1}^{ 3} t ^{1 \over 6}.
$$

Finally, we can  estimate $S \tilde v$ in $L^2$: first, we note that $S \tilde {v}$ solves
\begin{align}
\label{eqStildev}
\partial_{t}  S \tilde v  + \partial_{x}^3  S \tilde v + 3  \partial_{x}(\tilde v^2  S \tilde v)
  =  \partial_{x}(S+2)K.
\end{align}
From an energy estimate, we find after integrating by parts
$$ {d \over dt } {1 \over 2 } \|  S \tilde v \|_{L^2}^2
  \lesssim { \eps_{1}^2 \over t} \|  S \tilde  v \|_{L^2}^2 + \|  S \tilde  v \|_{L^2_{w'}}
  \left( { \tilde \eps_{1} \over \langle  t \rangle^{m-1}}  +  \langle t \rangle( |\ddot{c}(t)|  +  |\ddot{h}(t)|)  +   \| S  \tilde v  (t,y)\|_{L^2_{w'}}\right) ,
$$
and hence \eqref{weightedtildev} and  \eqref{h"c"} yield
\beq
\label{energiefinale3}
\|  S \tilde v (t)  \|_{L^2}^2 \lesssim \langle t \rangle^{ C  \eps_{1}^2}  \left(  \eps_{0}^2 + \tilde \eps_{1}^2 \right).
\eeq
 By using
 $$  |\partial_{x}|^\alpha( x f)  =   e^{t \partial_{x}^3} |\partial_{x}|^\alpha ( I S \tilde v )
  -  3 t e^{t \partial_{x}^3}\big(- |\partial_{x}^\alpha | \tilde v^3  +  |\partial_{x}^\alpha |K\big), $$
  we get that
  $$ \| |\partial_{x}|^\alpha( x f) \|_{L^2} \lesssim  \|I  S \tilde v \|_{H^1} +  t \|  |\partial_{x}^\alpha | \tilde v^3 \|_{L^2}
   + t  \|K\|_{H^1}.$$ 
Then, by using \eqref{energiefinale3}, \eqref{energiefinale2},  \eqref{estv1fin2} and
    $$ \| K \|_{H^1} \lesssim  { \tilde \eps_{1} \over \langle t \rangle^{m}}
  + ( 1 + \tilde \eps_{1}) \| v \|_{H^1_{w}}
       \lesssim { \tilde \eps_{1} \over \langle t \rangle^{m}},
       $$
       we finally get
$$
\|| \partial_x|^\alpha (x f) \|_{L^2} \lesssim \tilde \eps_{1} +  \eps_{1}^{ 3} t ^{ {1 \over 6 } - { \alpha \over 3} }.
$$
Note that another way to get this estimate (that avoids using the  $H^1$ regularity of the initial data in this step)  would be to start
 from \eqref{eqStildev} and to  follow the same steps as in the proof of \eqref{lemee3} in Lemma \ref{lemee}.

%

It remains to estimate $\| \hat f \|_{L^\infty}.$ We can follow the proof of Proposition \ref{prohatf}.
The equation for $\hat f$ is now
$$
\partial_t \widehat{f}(t,\xi)  = \frac{i}{2\pi} \iint e^{-it\phi(\xi,\eta,\sigma)} \xi
  \widehat{f}(\xi-\eta-\sigma) \widehat{f}(\eta) \widehat{f}(\sigma)\,d\eta\,d\sigma +  G(t, \xi),
$$
where
$$ G(t, \xi) = \mathcal{F} \big(  e^{t \partial_{x}^3} \partial_{x} K \big)(\xi).$$
We can estimate
$$ \| G(t) \|_{L^\infty} \leq  \| \mathcal{F} (\partial_{x} K)\|_{L^\infty}
  \lesssim  \| \partial_{x} K \|_{L^1} \lesssim  | \dot c |  +  | \dot h | +  ( 1 + \tilde \eps_{1}) \| v (t,y) \|_{H^1_{w}}
$$
by using the exponential decay provided by  $Q_{c}$, and hence deduce
$$
\| G(t) \|_{L^\infty} \lesssim  { \tilde \eps_{1}\over \langle t \rangle^{m}}.
$$
This term is thus integrable in time for $m>1$ and does not affect the arguments in the proof of Proposition \ref{prohatf}.
This completes the proof of the statement, and gives Theorem \ref{maintheo2}.
\end{proof}

\appendix
\section{Auxiliary Lemmas}

In this appendix we gather several lemmas that are used throughout the paper.
First, a lemma about stationary phase.

\begin{lemma}[Stationary phase in dimension 2]
\label{pinguin2}
Consider $\chi \in \mathcal{C}_0^\infty$ such that $\chi = 0$ in $B(0,2)^c$, and $|\nabla \chi|+ |\nabla^2 \chi|\lesssim 1$;
and $\psi \in \mathcal{C}^\infty$ such that $| \det \operatorname{Hess} \psi| \geq 1$,
and $|\nabla \psi| + |\nabla^2 \psi| + |\nabla^3 \psi| \lesssim 1$. Let
$$
I = \iint e^{i\lambda \psi(\eta,\sigma)} F(\eta,\sigma) \chi(\eta,\sigma)\,d\eta \,d\sigma.
$$
Then, for any $\alpha \in [0,1]$,

\setlength{\leftmargini}{2em}
\begin{itemize}
\item[(i)] If $\nabla \psi$ only vanishes at $(\eta_0,\sigma_0)$,
$$
I = \frac{2\pi e^{i\frac{\pi}{4}s}}{\sqrt{\Delta}}\frac{e^{i\lambda \psi(\eta_0,\sigma_0)}}{\lambda}  F(\eta_0,\sigma_0) \chi(\eta_0,\sigma_0)
+ O \left( \frac{\left\| \langle (x,y) \rangle^{2 \alpha} \widehat{F} \right\|_{L^1}}{\lambda^{1+\alpha}} \right),
$$
where $s = \operatorname{sign} \operatorname{Hess} \psi$
and $\Delta = |\operatorname{det} \operatorname{Hess} \psi|$.
\item[(ii)] If $|\nabla \psi|\geq 1$,
$$
I =  O \left( \frac{\left\| \langle (x,y) \rangle^{\alpha} \widehat{F} \right\|_{L^1}}{\lambda^{1+\alpha}} \right).
$$
\end{itemize}
\end{lemma}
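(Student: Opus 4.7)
The plan is to recover $F$ from its Fourier transform by writing
\[ F(\eta,\sigma) = \frac{1}{2\pi} \iint e^{i(x\eta+y\sigma)} \widehat{F}(x,y) \, dx\, dy \]
and then applying Fubini, so that
\[ I = \frac{1}{2\pi} \iint \widehat{F}(x,y) \, J(x,y) \, dx\, dy, \qquad J(x,y) := \iint e^{i \Psi(\eta,\sigma)} \chi(\eta,\sigma) \, d\eta\, d\sigma, \]
where $\Psi(\eta,\sigma) := \lambda\psi(\eta,\sigma) + x\eta + y\sigma$. The task then reduces to pointwise bounds on the inner oscillatory integral $J(x,y)$, whose phase depends on the parameter $(x,y)$.

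For case (ii), $\nabla\Psi = \lambda\nabla\psi + (x,y)$, and with $C := \sup_{B(0,2)}|\nabla\psi|$ the gradient is bounded below by $\lambda/2$ when $|(x,y)|\leq \lambda/(2C)$ and by $|(x,y)|/2$ when $|(x,y)|\geq 2C\lambda$. Iterated integration by parts against $\nabla\Psi/|\nabla\Psi|^2$ then yields $|J(x,y)|\lesssim_N \lambda^{-N}$ and $|J(x,y)|\lesssim_N |(x,y)|^{-N}$ respectively on these two ranges. On the intermediate band $|(x,y)|\sim \lambda$ a critical point of $\Psi$ may appear, but the assumption $|\det\operatorname{Hess}\psi|\geq 1$ lets the standard two-dimensional stationary phase lemma give the weaker pointwise bound $|J(x,y)|\lesssim \lambda^{-1}$. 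On the intermediate and large ranges one has $|(x,y)|\gtrsim \lambda$, so a factor $\langle(x,y)\rangle^\alpha\gtrsim \lambda^\alpha$ can be inserted for free; choosing $N$ large enough on the small range, the three contributions combine into the bound claimed in (ii).

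For case (i) I would first introduce the candidate main term
\[ A(x,y) := \frac{2\pi e^{i\pi s/4}\chi(\eta_0,\sigma_0)}{\lambda\sqrt{\Delta}} \, e^{i\lambda\psi(\eta_0,\sigma_0)+i(x\eta_0+y\sigma_0)}, \]
chosen so that Fourier inversion of $F$ at $(\eta_0,\sigma_0)$ gives
\[ \frac{1}{2\pi}\iint \widehat{F}(x,y) A(x,y)\, dx\, dy = \frac{2\pi e^{i\pi s/4}}{\lambda\sqrt{\Delta}}\, e^{i\lambda\psi(\eta_0,\sigma_0)} F(\eta_0,\sigma_0) \chi(\eta_0,\sigma_0), \]
which matches the claimed leading term. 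It then remains to bound $E(x,y) := J(x,y)-A(x,y)$. On $|(x,y)|\leq c\lambda$ (with $c$ small), the implicit function theorem applied to $\nabla\psi(\eta,\sigma) = -(x,y)/\lambda$ (using $|\det\operatorname{Hess}\psi|\geq 1$) produces a unique non-degenerate critical point $(\eta_\ast,\sigma_\ast)$ of $\Psi$ lying within $O(|(x,y)|/\lambda)$ of $(\eta_0,\sigma_0)$. Standard second-order stationary phase expresses $J(x,y)$ as the leading term at $(\eta_\ast,\sigma_\ast)$ with remainder $O(\lambda^{-2})$; Taylor expanding, and exploiting $\nabla\psi(\eta_0,\sigma_0)=0$ so that
\[ \Psi(\eta_\ast,\sigma_\ast) = \lambda\psi(\eta_0,\sigma_0) + x\eta_0 + y\sigma_0 + O(|(x,y)|^2/\lambda), \]
while $\chi/\sqrt{\Delta}$ evaluated at $(\eta_\ast,\sigma_\ast)$ differs from its value at $(\eta_0,\sigma_0)$ by $O(|(x,y)|/\lambda)$, then yields $|E(x,y)|\lesssim \lambda^{-2}(1+|(x,y)|^2)$ on this range. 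Simultaneously the trivial estimate $|E|\leq |J|+|A|\lesssim \lambda^{-1}$ holds (with $|J|\lesssim \lambda^{-1}$ once more coming from two-dimensional stationary phase). Interpolating via $\min(a,b)\leq a^\alpha b^{1-\alpha}$ with $a = \lambda^{-2}(1+|(x,y)|^2)$ and $b = \lambda^{-1}$ gives
\[ |E(x,y)| \lesssim \lambda^{-(1+\alpha)}\bigl(1+|(x,y)|^2\bigr)^\alpha \lesssim \lambda^{-(1+\alpha)}\langle(x,y)\rangle^{2\alpha} \]
on $|(x,y)|\leq c\lambda$; on $|(x,y)|>c\lambda$ one has $\langle(x,y)\rangle\gtrsim \lambda$, and the trivial bound $|E|\lesssim \lambda^{-1}$ is absorbed directly by $\lambda^{-(1+\alpha)}\langle(x,y)\rangle^{2\alpha}$. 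The main technical point will be to track carefully the uniformity in $(x,y)$ of the stationary-phase remainders on the small range, since it is precisely the shift of the critical point by $O(|(x,y)|/\lambda)$ that produces the extra factor $|(x,y)|^2$, and hence the weight $\langle(x,y)\rangle^{2\alpha}$ in (i) versus $\langle(x,y)\rangle^{\alpha}$ in (ii).
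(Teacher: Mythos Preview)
Your argument is correct and follows essentially the same route as the paper: you pass to the Fourier side in $F$, reduce to a pointwise study of the kernel $J(x,y)$ (the paper's $K(x,y)$), locate the shifted critical point via the implicit function theorem, and compare the stationary-phase leading term at the shifted point with the one at $(\eta_0,\sigma_0)$ using the $O(|(x,y)|^2/\lambda)$ phase shift and $O(|(x,y)|/\lambda)$ amplitude shift. The only minor cosmetic difference is that for part~(ii) the paper interpolates at the level of $I$ (between the bare stationary-phase bound $\lambda^{-1}\|\widehat F\|_{L^1}$ and the bound $\lambda^{-2}\|\langle(x,y)\rangle\widehat F\|_{L^1}$ obtained after one integration by parts in $I$), whereas you interpolate pointwise in $(x,y)$ by splitting into regions $|(x,y)|\lesssim\lambda$ and $|(x,y)|\gtrsim\lambda$; both arguments are equally valid.
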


\begin{proof}
$(i)$ We assume for simplicity that $\eta_0 = \sigma_0 = \psi(\eta_0,\sigma_0) = 0$. If necessary, it is possible to restrict the support of $\chi$ to an arbitrarily small neighborhood of $0$, since the remainder can be treated by $(ii)$.
By Plancherel's theorem,
\begin{equation}
\label{robin}
I = \frac{1}{2\pi} \iint \underbrace{ \left[ \iint e^{i[\lambda \psi(\eta,\sigma) - x\eta - y \sigma ]}
\chi(\eta,\sigma)\,d\eta \,d\sigma \right]}_{K(x,y)} \widehat{F} (x,y) \,dx\,dy.
\end{equation}
The function $K$ can be written
$$
K(x,y) =  \iint e^{i\lambda \Phi_{X,Y}(\eta,\sigma)}
\chi(\eta,\sigma)\,d\eta \,d\sigma
$$
with $\Phi_{X,Y}(\eta,\sigma) = \psi(\eta,\sigma) - X \eta - Y \sigma$ and $X = x/\lambda$, $Y = y/\lambda$.
If $X$ or $Y$ is larger than $2 \max_{\operatorname{Supp} \chi}|\nabla \psi|$, it is easy to bound $K$ by integrating by parts
in the above integral.
Thus we can assume that $X$, $Y$ are less than $2 \max_{\operatorname{Supp} \chi} |\nabla \psi|$.
Since $\operatorname{Supp} \chi$ can be chosen to be a small neighborhood of $0$, we can assume that $X$ and $Y$ are small.

By assumption, $\operatorname{Hess} \Phi$ is non-degenerate, thus by the implicit function theorem there exists
$(\bar{\eta}(X,Y),\bar{\sigma}(X,Y))$ such that
$$
\nabla_{\eta,\sigma} \Phi_{X,Y} (\bar{\eta}(X,Y),\bar{\sigma}(X,Y)) = 0
$$
and furthermore
\begin{equation}
\label{cardinal}
|\bar{\eta}(X,Y)| + |\bar{\sigma}(X,Y)| \lesssim |X| + |Y| \quad \mbox{and} \quad
|\Phi_{X,Y}(\bar{\eta}(X,Y),\bar{\eta}(X,Y))| \lesssim |X|^2 + |Y|^2.
\end{equation}
By the stationary phase lemma,
$$
K(x,y) = \frac{2\pi e^{i\frac{\pi}{4}s}}{\sqrt{\bar \Delta}}
\frac{e^{i\lambda \Phi(\bar{\eta},\bar{\sigma})}}{\lambda}  \chi(\bar{\eta},\bar{\sigma})
+ O\left( \frac{1}{\lambda^2} \right),
$$
where $\bar \Delta = |\det \operatorname{Hess} \psi|(\bar \eta,\bar \sigma)$
Coming back to~\eqref{robin},
\begin{equation*}
\begin{split}
I & =2\pi e^{i\frac{\pi}{4}s} \frac{1}{\lambda} \iint \widehat{F}(x,y) e^{i\lambda \Phi(\bar{\eta},\bar{\sigma})}
\frac{1}{\bar \Delta} \chi(\bar{\eta},\bar{\sigma})\,dx\,dy + O\left( \frac{\|\widehat{F}\|_{L^1}}{\lambda^2} \right)
\\
& = \frac{2\pi e^{i\frac{\pi}{4}s}}{\sqrt{\Delta}} \frac{1}{\lambda}F(0,0) \chi(0,0)
\\
& + \frac{e^{i\frac{\pi}{4}s}}{\sqrt{\Delta}} \frac{1}{\lambda} \iint \widehat{F}(x,y)
\left[ e^{i\lambda \Phi(\bar{\eta},\bar{\sigma})} \frac{\chi(\bar{\eta},\bar{\sigma})}{\bar \Delta} -\frac{\chi(0,0)}{\Delta} \right] \,dx\,dy
+ O\left( \frac{\|\widehat{F}\|_{L^1}}{\lambda^2} \right)
\end{split}
\end{equation*}
Therefore, using~\eqref{cardinal}, we find for any $\alpha \in [0,1]$
\begin{equation*}
\begin{split}
& \left| I - \frac{2\pi e^{i\frac{\pi}{4}s}}{\sqrt{\Delta}}\frac{1}{\lambda} F(0,0)
+ O\left( \frac{\|\widehat{F}\|_{L^1}}{\lambda^2} \right) \right|
\\
& \lesssim \frac{1}{\lambda}  \iint | \widehat{F}(x,y) |
  \left[ |e^{i\lambda \Phi(\bar{\eta},\bar{\sigma})} - 1| + |\chi(\bar{\eta},\bar{\sigma}) - \chi(0,0)| \right]\,dx dy
\\
& \lesssim \frac{1}{\lambda} \iint | \widehat{F}(x,y) | \left[ \frac{|(x,y)|^{2 \alpha}}{\lambda^{\alpha}}
  + \frac{|(x,y)|^\alpha}{\lambda^\alpha} \right] \,dx dy
  \lesssim \frac{1}{\lambda^{1+\alpha}} \| {\langle (x,y) \rangle}^{2\alpha} \widehat{F} \|_{L^1},
\end{split}
\end{equation*}
which is the desired result.

\bigskip

\noindent
$(ii)$ A direct stationary phase estimate as above, using only the non-degeneracy of $\operatorname{Hess} \psi$, gives the bound
$$
|I| \lesssim \frac{1}{\lambda} \| \widehat{F} \|_{L^1}.
$$
Furthermore, since we are now assuming that $\psi$ does not have stationary points, it is possible to integrate by parts in $I$ before
applying this stationary phase estimate. This gives
$$
|I| \lesssim \frac{1}{\lambda^2} \| \langle (x,y) \rangle \widehat{F} \|_{L^1}.
$$
Interpolating between these two inequalities gives the desired estimate.
\end{proof}

The following lemma gives some bounds on pseudo-product operators satisfying certain strong integrability conditions:
\begin{lemma}[Bounds on pseudo-product operators]\label{penguin3}
Assume that $m \in L^1(\mathbb{R}\times\mathbb{R})$ satisfies
\begin{equation}\label{touse1}
\Big\|\,\int_{\mathbb{R}\times\mathbb{R}}m(\eta,\sigma)e^{ix\eta}e^{iy\sigma}\,d\eta d\sigma\Big\|_{L^1_{x,y}}\leq A \, ,
\end{equation}
for some $A > 0$.
Then, for all $p,q,r \in [1,\infty]$ such that $1/p+1/q=1/r$ one has
\begin{equation}
\label{touse1.5}
{\| T_m(f,g) \|}_{L^r} \lesssim A {\|f\|}_{L^p} {\|g\|}_{L^q}.
\end{equation}

Moreover, if $1/p+1/q+1/r = 1$
\begin{equation}\label{touse2}
 \Big|\int_{\mathbb{R}\times\mathbb{R}}\widehat{f}(\eta)\widehat{g}(\sigma)\widehat{h}(-\eta-\sigma)m(\eta,\sigma)\,d\eta d\sigma\Big|
\lesssim A\|f\|_{L^p}\|g\|_{L^q}\|h\|_{L^r}.
\end{equation}
\end{lemma}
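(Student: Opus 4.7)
The plan is to reduce both bounds to a simple physical-space representation of the multiplier. Define
\[
K(x,y) := \frac{1}{(2\pi)^2}\int_{\mathbb{R}^2} m(\eta,\sigma)\, e^{ix\eta+iy\sigma}\, d\eta\, d\sigma,
\]
so that the hypothesis \eqref{touse1} reads ${\|K\|}_{L^1_{x,y}} \lesssim A$, and Fourier inversion gives
\[
m(\eta,\sigma) = \int_{\mathbb{R}^2} K(x,y)\, e^{-ix\eta - iy\sigma}\, dx\, dy.
\]
Everything then follows by plugging this representation into the two quantities to be estimated and swapping orders of integration.

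For \eqref{touse1.5}, I would start from the Fourier representation of $T_m(f,g)$ as an integral against $m(\eta,\sigma)$ of $\widehat f(\eta)\widehat g(\sigma)$ with an $e^{iz(\eta+\sigma)}$ factor, substitute the expression for $m$, and recognize the $\eta$ and $\sigma$ integrals as Fourier inversions applied to $f$ and $g$. This yields, up to a harmless constant,
\[
T_m(f,g)(z) \;=\; c\int_{\mathbb{R}^2} K(x,y)\, f(z-x)\, g(z-y)\, dx\, dy.
\]
Taking the $L^r_z$ norm, using Minkowski's inequality to bring the norm inside the $(x,y)$ integral, and finally applying H\"older for translations (which preserve $L^p$ and $L^q$ norms) gives
\[
{\|T_m(f,g)\|}_{L^r} \;\lesssim\; {\|K\|}_{L^1}\, {\|f\|}_{L^p}\,{\|g\|}_{L^q} \;\lesssim\; A\,{\|f\|}_{L^p}{\|g\|}_{L^q},
\]
which is \eqref{touse1.5}.

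For \eqref{touse2}, I would substitute the same expression for $m$ directly into the trilinear integral and interchange the $(x,y)$ integration with the $(\eta,\sigma)$ integration. The inner double integral becomes
\[
\int_{\mathbb{R}^2}\widehat f(\eta)\widehat g(\sigma)\widehat h(-\eta-\sigma)\, e^{-ix\eta-iy\sigma}\, d\eta\, d\sigma,
\]
and writing $\widehat h(-\eta-\sigma)$ in physical space via $\widehat h(-\eta-\sigma)=\frac{1}{\sqrt{2\pi}}\int h(z)e^{iz(\eta+\sigma)}\,dz$ separates the $\eta$ and $\sigma$ variables; each one-dimensional integral is then the Fourier inversion formula applied to $f$ at $z-x$ and to $g$ at $z-y$. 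The upshot is
\[
\int \widehat f(\eta)\widehat g(\sigma)\widehat h(-\eta-\sigma)\,m(\eta,\sigma)\,d\eta\,d\sigma \;=\; c\int_{\mathbb{R}^2}K(x,y)\Bigl[\int h(z)\,f(z-x)\,g(z-y)\,dz\Bigr]dx\,dy.
\]
Applying H\"older in $z$ to the bracket (permissible since $1/p+1/q+1/r=1$) and then $L^1$-integrating against $K$ yields the claimed bound with constant $\lesssim A$.

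The only subtlety is bookkeeping of $2\pi$ factors and of the Fourier convention used in Section~\ref{secnot}; once the representation $m=\widehat K$ is in place, both statements are essentially Young/H\"older, and no delicate estimate is needed. The step to watch is the clean separation of variables in the trilinear integral via the identity $\widehat h(-\eta-\sigma)=\frac{1}{\sqrt{2\pi}}\int h(z)e^{iz(\eta+\sigma)}dz$, which is exactly what makes the kernel $K$ act as a convolution-type weight over the diagonal trilinear form.
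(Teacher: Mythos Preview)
Your proposal is correct and follows essentially the same approach as the paper: rewrite the operator in physical space via the kernel $K$ and apply H\"older. The only cosmetic difference is that the paper proves \eqref{touse2} first and then remarks that \eqref{touse1.5} follows by duality, whereas you prove \eqref{touse1.5} directly via Minkowski on the kernel representation; both routes are equivalent and equally short.
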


\begin{proof}
We rewrite
\begin{equation*}
\begin{split}
\Big|\int_{\mathbb{R}\times\mathbb{R}}\widehat{f}(\eta)\widehat{g}(\sigma)\widehat{h}(-\eta-\sigma)m(\eta,\sigma)\,d\eta d\sigma\Big|
&=C\Big|\int_{\mathbb{R}^3}f(x)g(y)h(z)K(z-x,z-y)\,dxdydz\Big|,\\
&\lesssim \int_{\mathbb{R}^3}|f(z-x)g(z-y)h(z)|\,|K(x,y)|\,dxdydz,
\end{split}
\end{equation*}
where
\begin{equation*}
 K(x,y):=\int_{\mathbb{R}\times\mathbb{R}}m(\eta,\sigma)e^{ix\eta}e^{iy\sigma}\,d\eta d\sigma.
\end{equation*}
The desired bound \eqref{touse2} follows easily from \eqref{touse1}.
The bilinear estimate \eqref{touse1.5} can be proven similarly using a duality argument.
\end{proof}

Finally, for the energy estimates, we need  the following lemma.

\begin{lemma}\label{lemmacomm}
The following commutator estimate holds:
\begin{align*}
 {\| \left[ |\partial_x|^{\alpha-1} \partial_x , P_{\ll j} w \right] P_j f \|}_{L^2}
  \lesssim 2^{(\alpha-1)j} {\| \partial_x w\|}_{L^\infty} {\| P_j f \|}_{L^2}.
\end{align*}
\end{lemma}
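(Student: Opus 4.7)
The plan is to recast the commutator as a pseudo-product operator in the style of Lemma~\ref{penguin3}, using a first-order Taylor expansion of the symbol to extract one derivative of $w$. Let $m(\xi)=i\xi|\xi|^{\alpha-1}$ be the symbol of $|\partial_x|^{\alpha-1}\partial_x$, and set $a=P_{\ll j} w$, $b=P_j f$. A direct Fourier-side computation gives
\begin{equation*}
\mathcal{F}\bigl([m(\partial_x),a]\,b\bigr)(\xi) \;=\; \frac{1}{\sqrt{2\pi}}\int \bigl[m(\xi)-m(\eta)\bigr]\,\widehat{a}(\xi-\eta)\,\widehat{b}(\eta)\,d\eta.
\end{equation*}
Writing $m(\xi)-m(\eta)=(\xi-\eta)\int_0^1 m'\bigl(\eta+t(\xi-\eta)\bigr)\,dt$ and using $(\xi-\eta)\widehat{a}(\xi-\eta)=-i\,\widehat{\partial_x a}(\xi-\eta)$, one obtains $[m(\partial_x),a]\,b=-i\,T_\sigma(\partial_x a,b)$ with bilinear symbol
\begin{equation*}
\sigma(\xi,\eta)\;=\;\int_0^1 m'\bigl(\eta+t(\xi-\eta)\bigr)\,dt.
\end{equation*}

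Since $\widehat{a}$ is supported in $\{|\xi-\eta|\ll 2^j\}$ and $\widehat{b}$ in $\{|\eta|\sim 2^j\}$, I can multiply $\sigma$ by smooth cutoffs $\chi_1\bigl((\xi-\eta)/2^j\bigr)\chi_2(\eta/2^j)$ without changing the operator. On this support the line segment from $\eta$ to $\xi$ stays at distance $\sim 2^j$ from the origin, where $m'$ is smooth with $|\partial^k m'|\lesssim 2^{(\alpha-1-k)j}$. Rescaling $(\xi-\eta,\eta)=2^j(u',\eta')$ turns the truncated symbol into $2^{(\alpha-1)j}N(u',\eta')$ for a smooth, compactly supported $N$ with bounds independent of $j$. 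The inverse Fourier transform (kernel) is then $2^{(\alpha+1)j}\check{N}(2^j\cdot,\,2^j\cdot)$, whose $L^1$ norm is $\lesssim 2^{(\alpha-1)j}$.

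With this verification in hand, Lemma~\ref{penguin3} (specifically the estimate \eqref{touse1.5}) applies with $p=\infty$, $q=2$, $r=2$ and $A\lesssim 2^{(\alpha-1)j}$, giving
\begin{equation*}
\bigl\|T_\sigma(\partial_x a,b)\bigr\|_{L^2}\;\lesssim\; 2^{(\alpha-1)j}\,\|\partial_x a\|_{L^\infty}\,\|b\|_{L^2}.
\end{equation*}
Finally, $\|\partial_x P_{\ll j}w\|_{L^\infty}=\|P_{\ll j}\partial_x w\|_{L^\infty}\lesssim\|\partial_x w\|_{L^\infty}$ by the $L^\infty$-boundedness of the (nice) projector $P_{\ll j}$, which closes the estimate.

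The only delicate point is the verification that the localized symbol really has Schwartz kernel of $L^1$-mass $\lesssim 2^{(\alpha-1)j}$; this is a standard but slightly tedious rescaling argument, and it is where the precise choice of the frequency cutoffs matters (they must be supported strictly away from $\eta=0$ so that $m'$ is smooth on the support). Everything else is routine: the mean-value identity for $m$, the substitution of $\partial_x a$ for the factor $(\xi-\eta)$, and the application of the pseudo-product bound.
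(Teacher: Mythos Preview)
Your proof is correct and follows essentially the same strategy as the paper's: localize the multiplier $|\partial_x|^{\alpha-1}\partial_x$ to frequencies $\sim 2^j$, use the mean-value theorem to trade the symbol difference for one derivative of $w$, and then bound the resulting kernel in $L^1$ to get the factor $2^{(\alpha-1)j}$. The only cosmetic difference is that the paper carries this out in physical space---writing the localized multiplier as convolution with $2^j\widehat{\widetilde\chi}(2^j\cdot)$ and applying Young's inequality directly---whereas you phrase it on the Fourier side as a pseudo-product and invoke Lemma~\ref{penguin3}; the underlying computation is the same.
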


\begin{proof}
Denoting $\widetilde P_j$ for the Fourier multiplier with symbol
$$\widetilde \chi \left( \frac{\xi}{2^j} \right) = \frac{\xi |\xi|^{\alpha-1}}{2^{\alpha j}} \left[  \chi \left( \frac{\xi}{2^{j+10}} \right) -  \chi \left( \frac{\xi}{2^{j-10}} \right) \right],$$
observe that
\begin{align*}
\big[ |\partial_x|^{\alpha-1} \partial_x, P_{\ll j} w \big] P_j f (x)
  & = 2^{\alpha j} \big[ \widetilde{P}_j, P_{\ll j} w \big] P_j f
  =  2^{\alpha j} \big[ 2^j \widehat{\widetilde{\chi}}(2^j \cdot) * , P_{\ll j} w \big] P_j f
\\
  & =2^{\alpha j} \int 2^j \widehat{\widetilde{\chi}}(2^j(x-y)) \big[ P_{\ll j} w(x) - P_{\ll j} w(y) \big] P_j f(y) \,dy .
\end{align*}
Thus
$$
\left| \big[ |\partial_x|^{\alpha-1} \partial_x , P_{\ll j} w \big] P_j f (y) \right|
  \lesssim 2^{\alpha j} \| \partial_x w\|_\infty \int 2^j |\widehat{\widetilde{\chi}}(2^j(x-y))| |x-y| |P_j f(y)|\,dy,
$$
and the desired result follows by Young's inequality.
\end{proof}


\begin{thebibliography}{100}



\bibitem{AblowitzBook}
Ablowitz, M. J.
\newblock {\em Nonlinear dispersive waves: Asymptotic Analysis and solitons}.
\newblock Cambridge text in applied mathematics. Cambridge University Press, New York, 2011. xiv+348 pp.



\bibitem{AKNS}
Ablowitz, M., Kaup, D., Newell, A. and Segur, H.
\newblock The inverse scattering transform-Fourier analysis for nonlinear problems.
\newblock {\em Studies in Appl. Math.} 53 (1974), no. 4, 249-315.




\bibitem{BCD}
Bahouri, H., J.Y. Chemin, J.Y. and Danchin, R.
\newblock {\em Fourier analysis and nonlinear partial differential equations}.
Grundlehren der Mathematischen Wissenschaften 343. Springer, Heidelberg, 2011.

\bibitem{Barab}
Barab, J. E.
\newblock Non-existence of asymptotically free solutions for nonlinear Schr\"odinger equation.
\newblock {\em Journal of Math. Phys.}, 25 (1984), no. 11, 3270-3273.

\bibitem{Buckmaster-Koch}
 Buckmaster, T and Koch, H.
 \newblock The Korteweg de Vries equation at $H^{-1}$ regularity.
 \newblock Preprint {\em arXiv:1112.4657}.


\bibitem{CCT}
Christ, M., Colliander, J. and Tao, T.
\newblock Asymptotics, frequency modulation, and low regularity ill-posedness for canonical defocusing equations.
\newblock {\em Amer. J. Math.} 125 (2003), no. 6, 1235-1293.




\bibitem{CKSTTKdV}
Colliander, J., Keel, M., Staffilani, G., Takaoka, H. and Tao T.
\newblock Sharp global well-posedness for KdV and modified KdV on $\R$ and $\T$.
\newblock {\em J. Amer. Math. Soc.} 16 (2003), 705-749.

\bibitem{CV} 
C\^ote, R. and Vega, L.
\newblock Scaling-sharp dispersive estimates for the Korteweg-de Vries group.
\newblock {\em C. R. Math. Acad. Sci. Paris} 346 (2008), no. 15-16, 845-848.

\bibitem{DZmKdV}
Deift, P. and Zhou, X.
\newblock A steepest descent method for oscillatory Riemann-Hilbert problems. Asymptotics for the MKdV equation.
\newblock {\em Ann. of Math.} 137 (1993), no. 2, 295-368.




\bibitem{DZPII}
Deift, P. and Zhou, X.
\newblock Asymptotics for the Painlev\'{e} II equation.
\newblock {\em Comm. Pure Appl. Math.}  48 (1995), no. 3, 277-337.


\bibitem{DelortKG1d}
Delort, J.M.
\newblock Existence globale et comportement asymptotique pour l' \'{e}quation de Klein-Gordon quasi-lin\'{e}aire \`{a} donn\'{e}es
petites en dimension 1.
\newblock {\em Ann. Sci. \'{E}cole Norm. Sup.} 34 (2001) 1-61.


\bibitem{PG}
Germain, P.
\newblock Space-time resonances.
\newblock {\em Proceedings of the Journ\'ees EDP 2010}, Exp. No 8.


\bibitem{GMS1}
Germain, P., Masmoudi, N. and Shatah, J.
\newblock Global solutions for quadratic Schr\"odinger equations in dimension 3.
\newblock {\em Int. Math. Res. Not.} (2009), no. 3, 414-432.

\bibitem{GMS1a} 
Germain P., Masmoudi, N. and Shatah, J.
\newblock Global solutions for 2D quadratic Schrödinger equations.
\newblock {\em J. Math. Pures Appl.} (9) 97 (2012), no. 5, 505-543.

\bibitem{GMS2}
Germain P., Masmoudi, N. and Shatah, J.
\newblock Global solutions for the gravity surface water waves equation in dimension 3.
\newblock {\em Ann. of Math.} (2) 175 (2012), no. 2, 691-754.


\bibitem{HG}
Harrop-Griffiths, B.
\newblock Long time behaviour of solutions to the mKdV.
\newblock Preprint {\em arXiv:1407.1406}.



\bibitem{HN}
Hayashi, N. and Naumkin, P.
\newblock Asymptotics for large time of solutions to the nonlinear Schr\"{o}dinger and Hartree equations.
\newblock {\em  Amer. J. Math.} 120 (1998), 369-389.




\bibitem{HNmKdV1}
Hayashi, N. and Naumkin, P.
\newblock Large time behavior of solutions for the modified Korteweg-de Vries equation.
\newblock {\em Int. Math. Res. Not.} (1999), no. 8, 395-418.


\bibitem{HNmKdV2}
Hayashi, N. and Naumkin, P.
\newblock On the Modified Korteweg-De Vries Equation
\newblock {\em Mathematical Physics, Analysis and Geometry} (2001), no. 4, 197-227.





\bibitem{HMPII}
Hastings, S. P. and McLeod J. B.
\newblock A boundary value problem associated with the second Painlev\'e ́transcendent and the Korteweg-de Vries equation.
\newblock {\em Arch. Rat. Mech. Anal.} 73 (1980), 31-51.




\bibitem{ITNLS}
Ifrim, M. and Tataru, D.
\newblock Global bounds for the cubic nonlinear Schr\"odinger equation (NLS) in one space dimension.
\newblock Preprint {\em arXiv:1404.7581}.


\bibitem{IoPu1}
Ionescu, A. and Pusateri, F.
\newblock Nonlinear fractional Schr\"{o}dinger equations in one dimension.
\newblock {\em J. Funct. Anal.} 266 (2014), 139-176.

\bibitem{IoPu2}
Ionescu, A. and Pusateri, F.
\newblock Global solutions for the gravity water waves system in 2D.
\newblock {\em Invent. Math.} 199 (2015), no. 3, 653-804.

\bibitem{IoPunote}
Ionescu, A. and Pusateri, F.
\newblock A note on the asymptotic behavior of gravity water waves in two dimensions.
\newblock https://web.math.princeton.edu/~fabiop/2dWWasym-f.pdf.


\bibitem{IoPu3}
Ionescu, A. and Pusateri, F.
\newblock Global analysis of a model for capillary water waves in 2D.
\newblock {\em arXiv:1406.6042}.


\bibitem{IoPu4}
Ionescu, A. and Pusateri, F.
\newblock Global regularity for 2D water waves with surface tension.
\newblock {\em arXiv:1408.4428}.


%
\bibitem{Kato}
Kato, T.
\newblock On the Cauchy problem for the (generalized) Korteweg-de Vries equation.
{\em Studies in applied mathematics} 93-128, Adv. Math. Suppl. Stud., 8, Academic Press, New York, 1983.


\bibitem{KP}
Kato, J. and Pusateri, F.
\newblock {A new proof of long range scattering for critical nonlinear Schr\"odinger equations}.
\newblock {\em Diff. Int. Equations} 24 (2011), no. 9-10, 923-940.


%
\bibitem{KPV1}
Kenig C. E., Ponce, G. and Vega, L.
\newblock On the (generalized) Korteweg-de Vries equation.
\newblock {\em Duke Math. J.} 59 (1989), no. 3, 585-610.



\bibitem{KPV2}
Kenig C. E., Ponce, G. and Vega, L.
\newblock Well-posedness and scattering results for the generalized Korteweg-de Vries equation via the contraction principle.
\newblock {\em Comm. Pure Appl. Math.} 46 (1993), no. 4, 527-620.



%
\bibitem{KPV3}
Kenig C. E., Ponce, G. and Vega, L.
\newblock On the ill-posedness of some canonical dispersive equations,
\newblock {\em Duke Math. J.} 106 (2001), 617-633.



\bibitem{Klein-Saut}
Klein, C. and Saut J.-C.
\newblock IST versus PDE, a comparative study
\newblock Preprint {\em arXiv:1409.2020}.


\bibitem{Koch}
Koch, H. and Marzuola, J.
\newblock Small data scattering and soliton stability in $\dot{H}^{-1/6}$ for the quartic KdV equation.
\newblock {\em Anal. PDE} 5 (2012), no. 1, 145-198.




\bibitem{LiPoBook}
Linares  F. and Ponce G.
\newblock Introduction to nonlinear dispersive equations.
\newblock Universitext. Springer, New York, 2009. xii+256 pp.




\bibitem{Martel-Merle}
Martel, Y and Merle, F.
\newblock Asymptotic stability of solitons for subcritical generalized KdV equations.
\newblock {\em Arch. Ration. Mech. Anal.} 157 (2001), no. 3, 219-254.



\bibitem{Martel-MerleRev}
Martel, Y. and Merle, F.
\newblock Asymptotic stability of solitons of the subcritical gKdV equations revisited.
\newblock {\em Nonlinearity} 18 (2005), no. 1, 55-80.



\bibitem{Martel-Merle2}
Martel, Y. and Merle, F.
\newblock Asymptotic stability of solitons of the gKdV equations with general nonlinearity.
\newblock {\em Math. Ann.} 341 (2008), no. 2, 391-427.






\bibitem{Martel-MerleReview}
Martel, Y. and Merle, F.
\newblock Review of long time asymptotics and collision of solitons for the quartic generalized Korteweg-de Vries equation.
\newblock {\em Proc. Roy. Soc. Edinburgh} Sect. A 141 (2011), no. 2, 287-317.


\bibitem{MaMeTs}
Martel, Y., Merle, F. and Tsai, T.
\newblock Stability and asymptotic stability in the energy space of the sum of $N$ solitons for subcritical gKdV equations.
\newblock {\em Comm. Math. Phys.} 231 (2002), no. 2, 347-373.


\bibitem{Melin}Melin, A.
\newblock
Operator methods for inverse scattering on the real line.
\newblock{\em Comm. Partial Differential Equations} 10 (1985), no. 7, 677-766.

\bibitem{Merle-Vega}
Merle, F. and Vega, L.
\newblock $L^2$ stability of solitons for KdV equation.
\newblock {\em Int. Math. Res. Not.} 2003, no. 13, 735-753.


\bibitem{Mizumachi0}
Mizumachi, T.
\newblock Asymptotic stability of lattice solitons in the energy space.
\newblock {\em Comm. Math. Phys.} 288 (2009), no. 1, 125-144.

\bibitem{Mizumachi1}
Mizumachi, T.
\newblock Large time asymptotics of solutions around solitary waves to the generalized Korteweg-de Vries equations.
\newblock {\em SIAM J. Math. Anal.} 32 (2001), no. 5, 1050-1080.

\bibitem{Mizumachi2}
Mizumachi, T.
\newblock Weak interaction between solitary waves of the generalized KdV equations.
\newblock {\em SIAM J. Math. Anal.} 35 (2003), no. 4, 1042-1080.


\bibitem{Mizumachi-Pego}
 Mizumachi, T.,  Pego, R. and  Quintero, J.
 \newblock  Asymptotic stability of solitary waves in the Benney-Luke model of water waves.
 \newblock {\em Differential Integral Equations} 26 (2013), no. 3-4, 253-301.

\bibitem{Mizumachi-Tzvetkov}
Mizumachi, T. and Tzvetkov, N.
\newblock $L^2$-stability of solitary waves for the KdV equation via Pego and Weinstein's method.
\newblock Preprint, {\em arXiv:1403.5321}.






\bibitem{Pego-Weinstein1}
Pego, R. and Weinstein, M. I.
\newblock Asymptotic stability of solitary waves.
\newblock {\em Comm. Math. Phys.} 164 (1994), no. 2, 305-349.


\bibitem{BosonStar}
Pusateri, F.
\newblock Modified scattering for the Boson Star equation.
\newblock {\em Comm. Math. Phys.} 332 (2014), no. 3, 1203-1234.


\bibitem{TaoBook}
Tao, T.
\newblock Nonlinear dispersive equations.
Local and global analysis.
\newblock CBMS Regional Conference Series in Mathematics, 106.  American Mathematical Society, 2006. xvi+373 pp. ISBN: 0-8218-4143-2.

\bibitem{TaoSolitons2}
Tao, T.
\newblock Scattering for the quartic generalised Korteweg-de Vries equation,
\newblock{\em J. Diff. Equations} 232 (2007), no. 2, 623-651.

\bibitem{TaoSolitons}
Tao, T.
\newblock Why are solitons stable?
\newblock {\em Bull. Amer. Math. Soc.} (N.S.) 46 (2009), no. 1, 1-33.


\bibitem{ZakMan}
 Zakharov, V. E. and Manakov S.V.
\newblock Asymptotic behavior of nonlinear wave systems integrated by the inverse scattering method.
{\em Soviet Physics JETP} 44 (1976), 106-112.



\end{thebibliography}
\end{document}